\documentclass[final,3p,times]{elsarticle}    

\usepackage{amssymb}
\usepackage{amsmath}
\usepackage{amsthm}
\usepackage{xcolor}
\usepackage{dirtytalk}
\usepackage{tabularx}
\usepackage{url}
\usepackage{booktabs} 
\usepackage{hyperref}
\usepackage{pgfplots}
\usepackage{multicol}
\usepackage{multirow}
\usepackage{caption}
\usepackage{subcaption}
\usepackage{natbib}
\usepackage{algorithm}
\usepackage{algpseudocode}
\usepackage{float}
\usepackage{enumitem}

\newtheorem{theorem}{Theorem}
\newtheorem{lemma}{Lemma}

\newtheorem{remark}{Remark}

\makeatletter
\renewcommand{\theALG@line}{\arabic{ALG@line}}
\makeatother

\journal{Computer Methods in Applied Mechanics and Engineering}

\begin{document}

\begin{frontmatter}

\title{Weak and entropy physics-informed neural networks for conservation laws}

\author[a,b]{Ismail Oubarka\corref{cor1}}
\ead{ismail.oubarka@um6p.ma, oubarka.ismail5@gmail.com}
\cortext[cor1]{Corresponding author}

\author[a]{Imad Kissami}
\author[b]{Mohamed Boubekeur}
\author[b]{Fayssal Benkhaldoun}
\author[c]{Zakaria Saâdi}
\author[d]{Aziz Madrane}

\affiliation[a]{organization={Mohammed VI Polytechnic University, College of Computing},
            addressline={Lot 660}, 
            city={Ben Guerir},
            postcode={43150}, 
            country={Morocco}}
            
\affiliation[b]{organization={LAGA, CNRS, UMR 7539, Université Sorbonne Paris Nord},
            city={Villetaneuse},
            postcode={93430}, 
            country={France}}   
\affiliation[c]{organization={Autorité de Sûreté Nucléaire et de Radioprotection (ASNR), PSE-ENV/SPDR/UEMIS, F-92260},
               city={Fontenay-aux-Roses},
               country={France}}
\affiliation[d]{organization={Department of Mechanical Engineering, McGill University}, 
city={Montreal, QC H3A 2T8},
country={Canada}}

\begin{abstract}
The Weak and Entropy PINNs (WE-PINNs) framework is proposed for the approximation of entropy solutions to nonlinear hyperbolic conservation laws. Standard physics-informed neural networks enforce governing equations in strong differential form, an approach that becomes structurally inconsistent in the presence of discontinuities due to the divergence of strong-form residuals near shocks. The proposed method replaces pointwise residual minimization with a space--time weak formulation derived from the divergence theorem. Conservation is enforced through boundary flux integrals over dynamically sampled space--time control volumes, yielding a mesh-free control-volume framework that remains well-defined for discontinuous solutions. Entropy admissibility is incorporated in integral form through the full Kru\v{z}kov family $\eta_\kappa(u) = |u-\kappa|$, with the entropy index $\kappa$ sampled stochastically during training, enabling the selection of the Oleinik-admissible solution for non-convex fluxes such as Buckley--Leverett. The resulting loss functional combines space--time flux balance and integral entropy inequalities, without resorting to dual-norm saddle-point formulations, auxiliary potential networks, or fixed discretization meshes. The conservation and entropy residuals reduce to boundary integrals and require no automatic differentiation of the neural approximation; a total-variation term is added separately as a soft regularization to support the convergence analysis. The framework is straightforward to implement and shows that an adequate loss function gives excellent results with a simple multi-layer perceptron. We establish a conditional convergence analysis linking the network's discrete losses and explicit sampling-consistency residuals to the $L^1$ error towards the entropy solution, providing the first such $L^1$ estimate for a mesh-free space--time control-volume PINNs formulation with integral Kru\v{z}kov entropy enforcement via the Bouchut--Perthame framework for scalar conservation laws. Numerical experiments on the Burgers equation, the Buckley--Leverett equation, and the compressible Euler equations in one and two spatial dimensions demonstrate accurate shock resolution and robust performance in both smooth and shock-dominated regimes.
\end{abstract}

\begin{keyword} 
PINNs\sep
Hyperbolic conservation laws\sep 
Weak entropy formulation\sep 
Space--time control volumes\sep 
$L^1$ convergence\sep 
TVD regularization
\end{keyword}
\end{frontmatter}

\section{Introduction}\label{sec:introduction}
Nonlinear hyperbolic conservation laws arise in a wide range of scientific and engineering applications, including compressible fluid dynamics, shallow water flows, gas dynamics, and related transport phenomena \cite{Lax1973, LeVeque2002, Dafermos2016, toro2009riemann, aboussi2023highly, ziggaf2020fvc}. A distinctive feature of these models is the spontaneous formation of shock discontinuities, even when the initial conditions are smooth \cite{Dafermos2016, Lax1957, Glimm1965, bressan2000hyperbolic, crandall1980monotone}. Once shocks appear, classical differentiable solutions cease to exist, and the governing equations must be interpreted in the weak sense \cite{Dafermos2016, Kruzkov1970}. Since weak solutions are generally not unique, the physically relevant solution must be selected through admissibility criteria, most notably entropy conditions \cite{Lax1973, Dafermos2016, Kruzkov1970, tadmor1987entropy}. Classical shock-capturing methods, including Godunov-type schemes, ENO and WENO methods, discontinuous Galerkin schemes, and entropy-stable finite-volume formulations, have been developed precisely to approximate such weak entropy solutions while preserving conservation and controlling nonphysical oscillations \cite{LeVeque2002, toro2009riemann, tadmor1987entropy}.

Physics-Informed Neural Networks (PINNs) \cite{Raissi2019} have emerged as a flexible framework for solving partial differential equations by embedding the governing equations directly into the training loss through automatic differentiation \cite{cai2021physics, karniadakis2021physics, cuomo2022scientific}. The approach has been extended to fractional operators \cite{pang2019fpinns}, variational formulations \cite{kharazmi2021hp}, and operator-learning settings through DeepONets \cite{lu2021learning} and Fourier neural operators \cite{li2021fno}. A substantial body of work has also investigated the training pathologies of PINNs through gradient-flow analysis \cite{wang2021understanding}, neural tangent kernel perspectives \cite{wang2022and}, causal training schedules \cite{wang2024causal}, self-adaptive weighting \cite{mcclenny2023self}, and adaptive sampling strategies \cite{wu2023comprehensive}. Rigorous error estimates have also been established for PINNs in several smooth regimes, under suitable regularity and stability assumptions, including representative elliptic/parabolic settings and incompressible Navier--Stokes equations \cite{mishra2023estimates, de2024error}.

However, the direct use of standard PINNs for nonlinear hyperbolic conservation laws remains problematic. In their classical form, PINNs minimize a strong-form residual such as
\[
    \|\partial_t u + \nabla\!\cdot F(u)\|_{L^2}^2,
\]
which assumes the existence of pointwise derivatives. This assumption is incompatible with discontinuous entropy solutions. In shock-dominated regimes, several studies have shown that standard PINNs tend to oversmooth discontinuities, underestimate shock speeds, distort plateau states, or fail to converge to the correct entropy solution on canonical benchmarks such as Burgers, Buckley--Leverett, and Euler equations \cite{Mao2020, jagtap2020conservative, fuks2020limitations, krishnapriyan2021characterizing}.

Recent theoretical analyses have clarified that this failure is not only a matter of architecture or hyperparameter tuning. De Ryck, Mishra and Molinaro~\cite{de2024wpinns} showed that the strong-form residual becomes unbounded as a smooth approximation sharpens toward a discontinuous entropy solution, whereas suitable weak or dual residual norms remain meaningful in the presence of shocks. Wang and Yang~\cite{CIPINN2024} further interpreted this phenomenon as an optimization barrier: the entropy solution is not naturally favored by the strong-form objective, and the loss landscape can push the optimizer toward smoother nonphysical profiles. These results motivate replacing pointwise differential residuals by weak or integral formulations that remain meaningful in the presence of shocks.

Several strategies have been proposed to improve the behavior of PINNs for discontinuous solutions. A first line of work retains the strong-form residual but introduces stabilization mechanisms, such as adaptive residual sampling \cite{Mao2020}, gradient reweighting \cite{wang2021understanding}, adaptive localized artificial viscosity \cite{coutinho2023physics}, gradient annihilation \cite{ferrer2024gradient}, lift-and-embed formulations \cite{sun2026lift}, and explicit enforcement of Rankine--Hugoniot jump conditions \cite{liu2024discontinuity}. Conservative PINNs (cPINNs) \cite{jagtap2020conservative} enforce flux continuity across subdomain interfaces in a domain-decomposition setting. Constraint-aware neural solvers \cite{magiera2020constraint} and RoeNet-type architectures \cite{tong2024roenet} have also been introduced to learn discontinuity-resolving operators or Riemann-solver structures. While these approaches improve empirical robustness, the governing equations are still often enforced through pointwise differential information, and shock resolution can remain sensitive to hyperparameters, sampling strategies, or prior knowledge of discontinuity locations.

A second line of work incorporates finite-volume ideas directly into the learning objective. Control-volume PINNs (cvPINNs) \cite{Patel2022} replace the strong-form residual by least-squares residuals of space--time flux integrals over a fixed partition, supplemented by an artificial viscosity flux proportional to $\Delta x^{2}$ and a discrete cell-based total-variation penalty. The resulting scheme is a hybrid finite-volume solver in which a neural network parameterizes the cell-centered solution, with both artificial viscosity and total-variation control explicitly tied to the mesh size. Closely related Godunov-type losses \cite{cassia2025godunov, patsatzis2025gorinns} and the SFVnet finite-volume-informed U-net \cite{SFVnet2026} embed discrete update rules, approximate Riemann solvers, or numerical fluxes into the training objective, inheriting problem-dependent discretization choices. Two structural limitations remain across these formulations. First, a space--time partition is fixed before training, and the stabilization mechanisms, namely artificial viscosity proportional to the mesh size $\Delta x$ and a cell-based total-variation penalty operating on cell-centered values, are intrinsically tied to this partition and cannot be decoupled from it. Second, entropy enforcement is performed through a single, problem-specific entropy pair, which is insufficient to select the Oleinik solution for non-convex fluxes such as the Buckley--Leverett model. 

A third line replaces the strong $L^2$ residual with weaker objectives that remain bounded in the presence of discontinuities. Weak PINNs (wPINNs) \cite{de2024wpinns} target dual $W^{-1,p}$ norms of the residual through saddle-point optimization with auxiliary test-function networks; this approach was subsequently refined by Chaumet and Giesselmann \cite{Chaumet2024} for improved dual-norm computation, boundary handling, and extension to systems. WF-PINNs \cite{wang2025wf} combine a weak-form integral loss based on smooth compactly supported test functions with an entropy condition for the scalar Burgers equation, representing an important step toward entropy-aware weak training. The Coupled Integral PINNs (CI-PINNs) framework \cite{CIPINN2024} follows another integral route by introducing an auxiliary potential network $\tilde{S}$ such that $\nabla\!\cdot\!\tilde{S} \approx \tilde{q}$ and $\partial_t \tilde{S}_j + F_j(\tilde{q}) \approx 0$, thereby reducing the reliance on direct differentiation of discontinuous primitive states in the main conservation loss. While mathematically appealing, such formulations introduce additional test-function networks, auxiliary potentials, and dual-network coupling
constraints, or consistency losses that increase algorithmic complexity.

Taken together, the existing weak, integral, and finite-volume-informed PINNs formulations do not simultaneously provide (i) a purely integral, mesh-free space--time formulation in which the conservation and entropy residuals reduce to boundary flux evaluations of the neural approximation, with no artificial viscosity and no underlying discretization mesh; (ii) entropy enforcement in integral form for the full Kru\v{z}kov family, enabling the unique selection of the entropy solution for non-convex fluxes; and (iii) a conditional $L^1$ convergence estimate towards the entropy solution, provided that the weak conservation loss, the entropy loss, and the sampling residuals vanish simultaneously. The present work addresses these three points jointly within a single, architecturally simple framework.

We propose Weak and Entropy PINNs (WE-PINNs), a mesh-free space--time control-volume formulation for nonlinear conservation laws. The central idea is to replace the strong-form PDE residual by integral conservation constraints on the boundary $\partial D$ of many sampled space--time control volumes
$D \subset \Omega_{x,t}$,
\[
    \int_{\partial D}
    \Bigl( U\,n_t + F(U)\cdot n_x \Bigr)\, dS = 0,
\]
complemented by integral entropy inequalities and a total-variation control term. The control volumes are re-sampled dynamically at multiple spatial and temporal scales during training, and boundary integrals are evaluated by tensor-product Gauss--Legendre quadrature. The entropy inequality is enforced in integral form for the full Kru\v{z}kov family
$\eta_\kappa(u) = |u-\kappa|$, with the entropy index $\kappa$ sampled stochastically during training, which is essential for selecting the physically admissible solution in the case of non-convex fluxes. By construction, the conservation and entropy residuals of WE-PINNs are evaluated solely through boundary flux integrals over the sampled space--time control volumes, requiring no automatic differentiation of the neural approximation. No artificial viscosity proportional to a mesh size $\Delta x$ is introduced, and no fixed discretization mesh underlies the formulation. A total-variation control term, added separately as a soft regularization to support the convergence analysis, is the only component involving finite-difference evaluations of the network values. The method employs a single feedforward network and requires no auxiliary potential, no test-function network, and no saddle-point optimization. The resulting loss remains meaningful when shocks develop, since it does not require the pointwise existence of $\partial_t u$ or $\nabla u$, enforces conservation in the sense of distributions, implicitly accounts for the Rankine--Hugoniot jump conditions, and selects entropy-admissible solutions through integral Kru\v{z}kov-type inequalities.

Building on this construction, the contributions of this work are threefold. First, we formulate WE-PINNs as a space--time control-volume method that combines integral conservation, integral Kru\v{z}kov entropy enforcement for the full one-parameter family through stochastic Kru\v{z}kov sampling, and total-variation control, within a single architecturally simple framework. Second, we establish a conditional $L^1$ convergence estimate based on the Bouchut--Perthame framework~\cite{bouchut1998} for scalar conservation laws. The estimate links the $L^1$ error to the discrete entropy loss and to explicit sampling-consistency residuals quantifying the gap between the continuous weak residuals and their reconstructions from the sampled control volumes. To our knowledge, this is the first such conditional estimate for a space--time control-volume PINNs with integral entropy enforcement. The result is restricted to scalar conservation laws, in line with the current state of the underlying error theory. Third, we provide empirical validation on scalar problems with convex and non-convex fluxes (Burgers, Buckley--Leverett) and on the compressible Euler system in one and two spatial dimensions, with the system-level experiments reported as robustness studies rather than theoretically guaranteed extensions.

The paper is organized as follows. Section~\ref{sec:methodology} presents the mathematical background and the weak space--time formulation. Section~\ref{sec:wepinns} develops the WE-PINNs framework, including the loss function, sampling strategy, quadrature rules, integral entropy constraints, and total-variation control. Section~\ref{sec:convergence} establishes the conditional $L^1$ convergence analysis for scalar conservation laws. Numerical experiments on the Burgers and Buckley--Leverett equations and on the one- and two-dimensional compressible Euler equations are reported in Section~\ref{sec:results}. Section~\ref{sec:conclusion} concludes the paper.

\section{Mathematical Background and Weak Formulation}
\label{sec:methodology}
Consider a system of nonlinear conservation laws in divergence form
\begin{equation}
\partial_t U + \nabla_x \cdot F(U) = 0,
\qquad U : \Omega_{x,t} \to \mathbb{R}^m,
\label{eq:conservation_law}
\end{equation}
on $\Omega_{x,t} = \Omega_x \times (0,T)$, where $U$ is the vector of conserved variables and $F : \mathbb{R}^m \to \mathbb{R}^{m \times d}$ is the flux, assumed Lipschitz continuous on bounded sets. Componentwise,
\[
F(U) = \big(F^{(1)}(U), \dots, F^{(d)}(U)\big),
\qquad F^{(j)}(U) \in \mathbb{R}^m,
\]
with $F^{(j)}(U)$ the flux in direction $x_j$. Equation~\eqref{eq:conservation_law} expresses local balance: the time variation of $U$ inside any region equals the net flux through its boundary. For the space--time formulation used below, we work in $(d+1)$ dimensions. For a space--time subdomain $D \subset \Omega_{x,t}$, $n = (n_x, n_t) \in \mathbb{R}^{d+1}$ denotes the outward unit normal on $\partial D$, with $n_x \in \mathbb{R}^d$ and $n_t \in \mathbb{R}$, and $dS$ the surface measure on $\partial D$.

\subsection{Weak conservation in space--time}
\label{sec:weak_form}

Solutions of nonlinear conservation laws may develop shock discontinuities in finite time. Once this happens, the classical derivatives $\partial_t U$ and $\nabla_x U$ cease to exist pointwise and the differential form of~\eqref{eq:conservation_law} no longer applies. Conservation must then be interpreted weakly. The conservation law holds in the weak sense if, for any smooth compactly supported test function $\varphi \in C_c^1(\Omega_{x,t}; \mathbb{R}^m)$,
\[
\int_0^T \int_{\Omega_x}
\left(
U \cdot \partial_t \varphi
+ \sum_{j=1}^d F^{(j)}(U) \cdot \partial_{x_j} \varphi
\right) dx\, dt = 0.
\]
This identity remains well-defined when $U$ contains discontinuities, provided $U$ and $F(U)$ are integrable, and accommodates shock solutions directly.

A convenient equivalent representation rewrites \eqref{eq:conservation_law} as a divergence in space--time. With
\[
J(U) := \big(F(U),\, U\big) \in \mathbb{R}^{m \times (d+1)},
\]
the conservation law reads $\nabla_{x,t} \cdot J(U) = 0$. The divergence theorem applied over any subdomain $D \subset \Omega_{x,t}$ then yields
\begin{equation}
\int_{\partial D}
\left( U\, n_t + F(U) \cdot n_x \right) dS = 0,
\qquad \forall\, D \subset \Omega_{x,t}.
\label{eq:weak_conservation}
\end{equation}
The time variation of the conserved quantities inside $D$ is balanced by the net flux across $\partial D$. The equality is vector-valued, with one equation per conserved component. The Rankine--Hugoniot jump conditions are implicit in~\eqref{eq:weak_conservation}, so no explicit shock tracking is needed.

\subsection{Entropy condition and admissibility}
\label{sec:entropy}

Identity~\eqref{eq:weak_conservation} accommodates discontinuous solutions but does not guarantee uniqueness. Nonphysical weak solutions, such as expansion shocks, satisfy the conservation law in the distributional sense without being physically admissible. The entropy condition provides the selection criterion.

An entropy pair consists of a convex entropy function $\eta : \mathbb{R}^m \to \mathbb{R}$ and associated entropy fluxes
\[
q(U) = \big(q^{(1)}(U), \dots, q^{(d)}(U)\big),
\qquad q^{(j)} : \mathbb{R}^m \to \mathbb{R},
\]
linked by the compatibility relations
\[
\nabla_U q^{(j)}(U) = \nabla_U \eta(U)\, \nabla_U F^{(j)}(U),
\qquad j = 1, \dots, d.
\]
For smooth $U$, $\partial_t \eta(U) + \nabla_x \cdot q(U) = 0$. In the presence of discontinuities, entropy may be produced across shocks, and the physically admissible solution satisfies the entropy inequality
\[
\partial_t \eta(U) + \nabla_x \cdot q(U) \leq 0
\]
in the distributional sense. 
For scalar conservation laws, this admissibility condition rules out nonphysical weak solutions and yields uniqueness. For systems, an entropy inequality associated with a chosen mathematical entropy pair provides a physically motivated admissibility constraint, but not a general uniqueness characterization. Integrating over an arbitrary space--time subdomain $D$ and applying the divergence theorem gives the integral form
\begin{equation}
\int_{\partial D}
\left( \eta(U)\, n_t + q(U) \cdot n_x \right) dS \leq 0,
\qquad \forall\, D \subset \Omega_{x,t}.
\label{eq:entropy_inequality}
\end{equation}

For scalar conservation laws, the Kru\v{z}kov family $\eta_\kappa(u) = |u - \kappa|$, $q_\kappa(u) = \mathrm{sgn}(u-\kappa)(F(u) - F(\kappa))$, $\kappa \in \mathbb{R}$, has a specific status: enforcing \eqref{eq:entropy_inequality} for every $\kappa$ is equivalent to the Oleinik admissibility condition and singles out the unique entropy solution, including for non-convex fluxes. The WE-PINNs formulation of Section~\ref{sec:wepinns} and the convergence analysis of Section~\ref{sec:convergence} both use this characterization. For systems, a single mathematical entropy pair $(\eta, q)$ associated with the model is used; the Euler choice is reported in~\ref{app:entropies}.

In the scalar Kru\v{z}kov setting, identities~\eqref{eq:weak_conservation} and~\eqref{eq:entropy_inequality} characterize the entropy solution. In the general system setting, they provide the conservation and entropy-admissibility constraints that form the basis of the loss functional in Section~\ref{sec:wepinns}.


\section{Weak and Entropy PINNs Formulation (WE-PINNs)}
\label{sec:wepinns}

WE-PINNs are built on a space--time integral formulation of conservation and entropy admissibility. Instead of enforcing the governing equations pointwise, the method penalizes violations of \eqref{eq:weak_conservation} and \eqref{eq:entropy_inequality} over a collection of dynamically sampled space--time control volumes. The loss has three components: a weak conservation loss for the integral identity \eqref{eq:weak_conservation} over sampled control volumes (Section~\ref{sec:cons_loss}); an entropy loss for the integral entropy inequality \eqref{eq:entropy_inequality}, applied to the full Kru\v{z}kov family in the scalar case (Section~\ref{sec:ent_loss}); and a total-variation control term used as a soft regularization to support the convergence analysis (Section~\ref{sec:tvd}). The first two components reduce to boundary flux evaluations on each face of the sampled control volumes and require no automatic differentiation of the neural approximation. Only the total-variation term involves finite-difference evaluations of the network values.

\subsection{Space--time control-volume formulation}
\label{sec:cv_formulation}

The space--time conservation identity~\eqref{eq:weak_conservation} is the structural basis of WE-PINNs. The method enforces this identity over space--time control volumes rather than pointwise, which bypasses the need for classical differentiability. We use control volumes of the form
\[
D = K \times [t^-, t^+] \subset \Omega_{x,t},
\]
where $K \subset \Omega_x$ is a bounded spatial region with sufficiently regular boundary $\partial K$. The only constraint on $K$ is the regularity needed for the divergence theorem. Decomposing $\partial D$ into the two time faces $K \times \{t^-\}$, $K \times \{t^+\}$, and the lateral boundary $\partial K \times [t^-, t^+]$, identity \eqref{eq:weak_conservation} reduces to
\[
\int_K U(x, t^+)\, dx
- \int_K U(x, t^-)\, dx
+ \int_{t^-}^{t^+} \int_{\partial K}
F(U) \cdot n_x\, dS_x\, dt
= 0,
\]
which expresses conservation over $K$: the change of the conserved quantities between $t^-$ and $t^+$ equals the net flux through $\partial K$. The formulation does not depend on the space dimension $d$ and remains valid for shock discontinuities, since it follows from the space--time divergence theorem alone. The multi-scale distribution of the sampled space--time control volumes is illustrated in Fig.~\ref{fig:CV_1}.

\begin{figure}[H]
    \centering
    \includegraphics[width=0.8\textwidth]{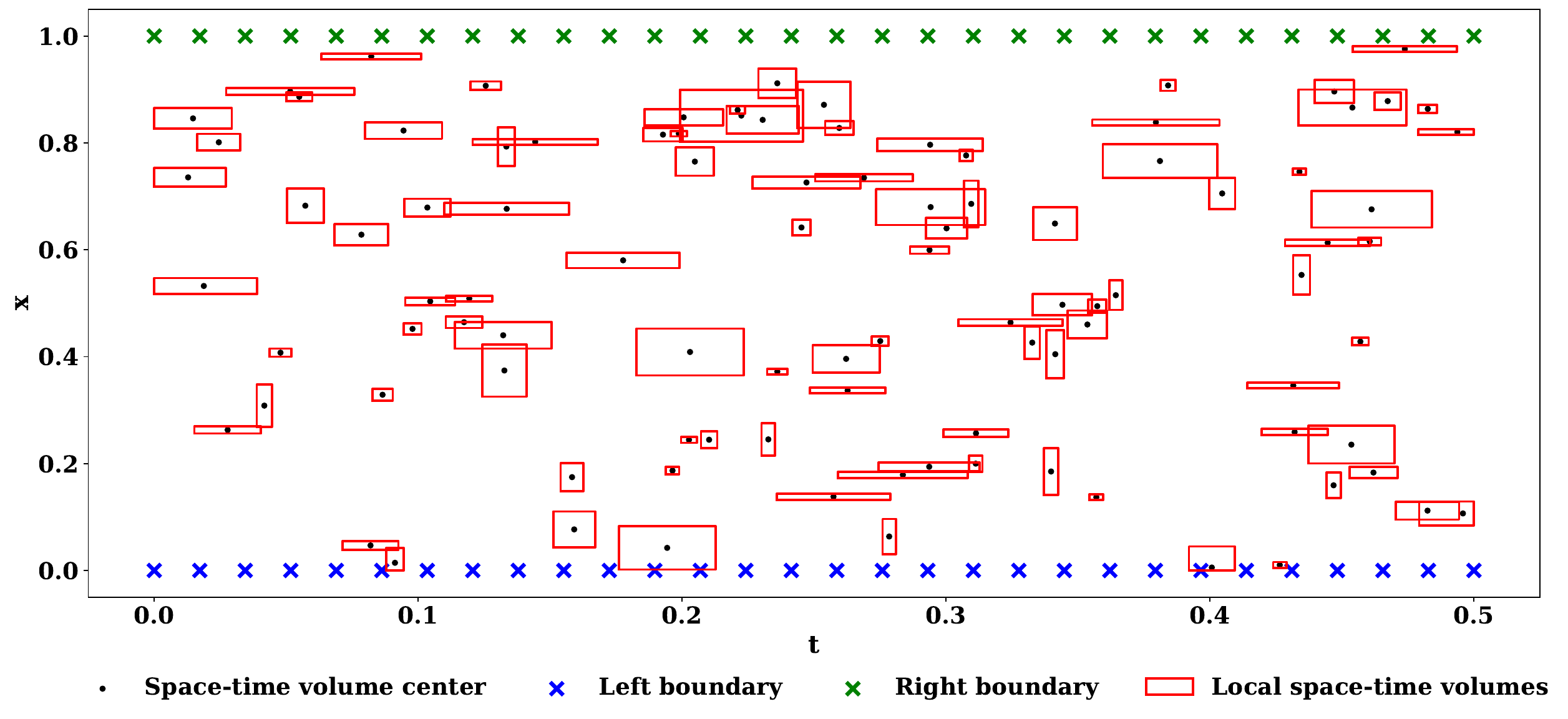}
    \includegraphics[width=0.8\textwidth]{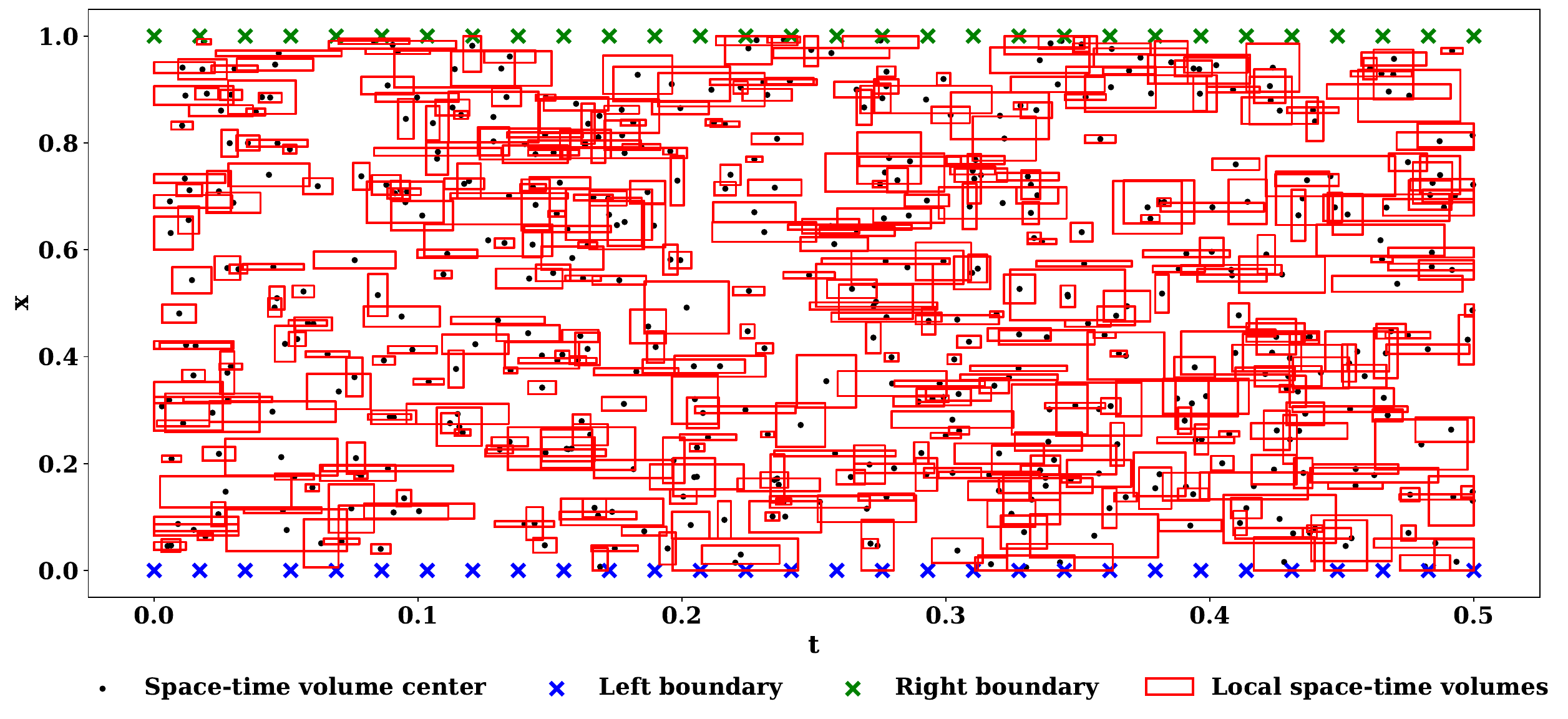}
    \includegraphics[width=0.8\textwidth]{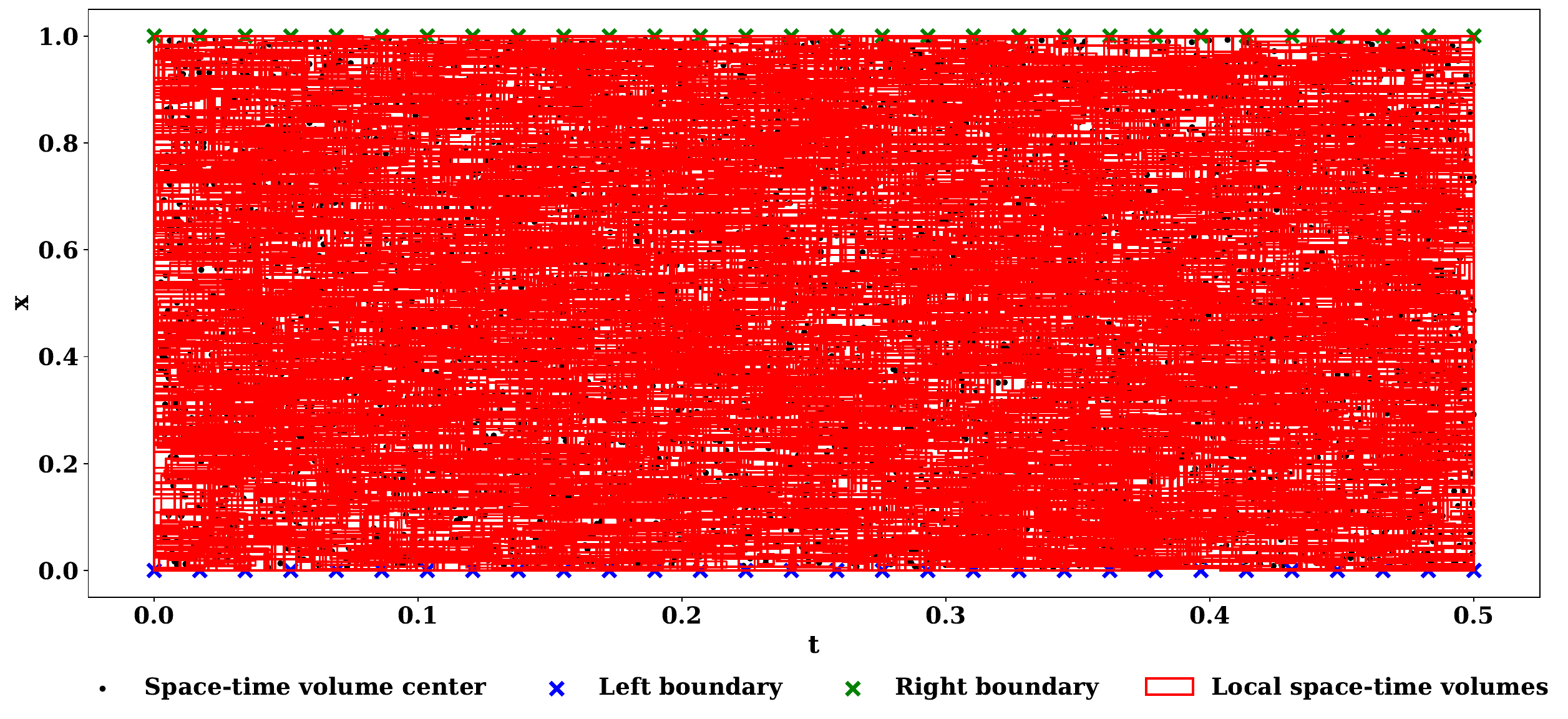}
    \caption{Local space--time control volumes for the weak conservation identity~\eqref{eq:weak_conservation}. Each sampled control volume $D_i$ is a space--time rectangle over which the identity is enforced. The three panels correspond to 100 (top), 500 (middle), and 5000 (bottom) sampled control volumes.}
    \label{fig:CV_1}
\end{figure}

\medskip
\noindent\textbf{Axis-aligned control volumes.} We use axis-aligned rectangles in 1D and boxes in higher dimensions for simplicity, although other geometries such as polygons or splines are admissible:
\[
D = [x_1, x_2] \times [t_1, t_2] \quad (d = 1),
\]
and similarly in $d = 2$ and $d = 3$ via Cartesian products of spatial intervals. The outward normal $n = (n_x, n_t)$ is then constant on each face of $\partial D$, which simplifies the evaluation of the boundary integrals in the weak and entropy residuals.

\medskip
\noindent\textbf{Multi-scale sampling.}
Shocks are localized structures. Enforcing conservation at a single spatial or temporal scale is not enough. Control volumes are therefore sampled with variable sizes
\[
\ell_x = x_2 - x_1 \in [\ell_{\min}, \ell_{\max}],
\qquad
\ell_t = t_2 - t_1 \in [\tau_{\min}, \tau_{\max}].
\]
Large domains capture the global balance of conserved quantities; smaller ones resolve sharp gradients. Mixing scales prevents the training process from missing localized shock structures.

\medskip
\noindent\textbf{Partition status of the sampled control volumes.}
The convergence analysis of Section~\ref{sec:convergence} uses a deterministic partition $\{D_i\}_{i=1}^{N_D}$ of $Q := \Omega_x \times (0,T)$ with $\sum_i |D_i| = |Q|$. In the practical implementation, control volumes are sampled stochastically at multiple scales at each training iteration, and the resulting collection may overlap or leave parts of $Q$ uncovered. The multi-scale sampling is therefore a Monte-Carlo surrogate of the partition-based residuals: in expectation over the sampling distribution, the partition assumption is recovered on average, and the empirical residuals approximate their continuous counterparts. A formal probabilistic concentration analysis of this surrogate is left for future work.

\medskip
\noindent\textbf{One spatial dimension.}
Each boundary face reduces to a one-dimensional integral, which we evaluate by Gauss--Legendre quadrature:
\[
\int_a^b g(s)\, ds
\approx \frac{b - a}{2}
\sum_{q=1}^Q w_q\,
g\!\left( \frac{a + b}{2} + \frac{b - a}{2}\xi_q \right),
\]
with $\{\xi_q, w_q\}_{q=1}^Q$ the standard Gauss nodes and weights on $[-1, 1]$. We prefer deterministic quadrature to Monte Carlo estimates because it produces a smoother loss, which is easier to optimize during the early stages of training.

\medskip
\noindent\textbf{Two and three spatial dimensions.}
In higher dimensions, each boundary face is rectangular and we use tensor-product Gauss--Legendre rules. In two dimensions,
\[
\int_{x_1}^{x_2} \int_{y_1}^{y_2} g(x, y)\, dx\, dy
\approx \frac{(x_2 - x_1)(y_2 - y_1)}{4}
\sum_{p=1}^{Q_x} \sum_{q=1}^{Q_y}
w_p v_q\, g\big( x(\xi_p),\, y(\eta_q) \big),
\]
where the mapping from $[-1, 1]^2$ to the physical face is performed independently in each coordinate direction. The same tensor-product strategy extends to three dimensions. Combining multi-scale sampling with deterministic quadrature gives reliable residual evaluations across the space--time domain, including near shocks.

\subsection{Weak conservation loss}
\label{sec:cons_loss}

Let $U_\theta(x, t)$ denote the neural network approximation of the conserved variables, with parameters $\theta$. We satisfy the initial condition through a temporal ansatz:
\[
U_\theta(x, t) = (1 - \tau(t))\, U_0(x) + \tau(t)\,
\mathcal{N}_\theta(x, t),
\]
where $\tau(t)$ is differentiable with $\tau(0) = 0$ and $\tau(T) = 1$ (we use $\tau(t) = t / T$) and $\mathcal{N}_\theta$ is a standard neural network. The ansatz gives $U_\theta(\cdot, 0) = U_0(\cdot)$ almost everywhere, which removes the initial condition error term from the convergence analysis. For Riemann-type initial conditions with discontinuous $U_0$, the identity holds a.e., which is enough for the $L^1$ estimate of Section~\ref{sec:convergence}.

The weak space--time control-volume formulation is enforced numerically by penalizing violations of the conservation identity over the collection $\{D_i\}_{i=1}^{N_D}$. For each $D_i$, the weak conservation residual is
\[
R_\theta(D_i)
= \int_{\partial D_i}
\left( U_\theta\, n_t + F(U_\theta) \cdot n_x \right) dS.
\]
The residual measures the flux imbalance across $\partial D_i$, that is, the deviation of $U_\theta$ from exact conservation over $D_i$. Conservation is componentwise, so the residual is vector-valued, $R_\theta(D_i) \in \mathbb{R}^m$. Boundary conditions enter directly: when a face of $D_i$ coincides with the physical boundary of $\Omega_x$, the flux integration along that face imposes the prescribed boundary data through $R_\theta(D_i)$.

The weak conservation loss is the volume-weighted average of the squared norms of these residuals:
\[
\mathcal{L}_{\mathrm{cons}}(\theta)
= \sum_{i=1}^{N_D}
\frac{\| R_\theta(D_i) \|_2^2}{|D_i|}.
\]
The formulation enforces conservation in integral form, which remains meaningful for discontinuous solutions. Because the loss depends only on boundary flux integrals, it avoids the explicit spatial derivatives of $U_\theta$ that destabilize training in the presence of steep gradients or shocks. Enforcing the constraint over many control volumes distributed throughout the domain preserves local balance while controlling the global consistency.

\subsection{Entropy inequality loss}
\label{sec:ent_loss}

To enforce physical admissibility and select the entropy-consistent solution, an additional term based on
\eqref{eq:entropy_inequality} enters the loss.

\medskip
\noindent\textbf{Scalar case: Kru\v{z}kov entropy family.} For scalar conservation laws, we use the Kru\v{z}kov family $\eta_\kappa(u) = |u - \kappa|$, $q_\kappa(u) = \mathrm{sgn}(u - \kappa)(F(u) - F(\kappa))$ for $\kappa \in K = [-M_\infty, M_\infty]$, where $M_\infty$ is a uniform $L^\infty$ bound on the approximation (see Section~\ref{sec:convergence}). For each control volume $D_i$ and each entropy parameter $\kappa \in K$, the entropy residual is
\[
E_\theta(D_i, \kappa)
= \int_{\partial D_i}
\left( \eta_\kappa(U_\theta)\, n_t + q_\kappa(U_\theta) \cdot n_x \right) dS.
\]
For entropy solutions, $E_\theta(D_i, \kappa) \leq 0$ on every admissible control volume and for every $\kappa$. Only positive violations are penalized, through a one-sided volume-weighted quadratic loss with a supremum over $\kappa$:
\[
\mathcal{L}_{\mathrm{ent}}(\theta)
= \sup_{\kappa \in K}
\sum_{i=1}^{N_D}
\frac{\big( \max\{0,\, E_\theta(D_i, \kappa)\} \big)^2}{|D_i|}.
\]
The supremum over $\kappa$ is approximated by stochastic sampling: at each training iteration, a single $\kappa \in K$ is drawn uniformly at random and used to evaluate the entropy residual on every sampled control volume. The resulting stochastic Kru\v{z}kov sampling strategy gives a Monte--Carlo approximation of the full entropy family; at finite training budget it should be read as a stochastic estimator, not as deterministic enforcement for every $\kappa$. This is needed for non-convex fluxes such as the Buckley--Leverett model, where one entropy pair is not enough to select the Oleinik solution (see Section~\ref{sec:BL}).

\medskip
\noindent\textbf{System case: single entropy pair.} For the Euler system, one mathematical entropy pair $(\eta, q)$ associated with the model is used instead of the full Kru\v{z}kov family. The entropy residual and loss become
\[
E_\theta(D_i)
= \int_{\partial D_i}
\left( \eta(U_\theta)\, n_t + q(U_\theta) \cdot n_x \right) dS,
\qquad
\mathcal{L}_{\mathrm{ent}}(\theta)
= \sum_{i=1}^{N_D}
\frac{\big( \max\{0,\, E_\theta(D_i)\} \big)^2}{|D_i|},
\]
with no supremum and no stochastic sampling. The explicit entropy pairs for Euler appear in~\ref{app:entropies}. The
stochastic Kru\v{z}kov sampling strategy is therefore reserved for the scalar case, in line with the scope of the convergence analysis in Section~\ref{sec:convergence}.

\medskip
This construction matches the underlying theory: the inequality remains satisfied when it should, and positive deviations corresponding to entropy violations are penalized. Equality is not enforced, since entropy production across shocks is physically admissible. The entropy loss thus adds a one-sided constraint to the weak conservation loss, selecting the relevant weak solution without suppressing shock dissipation.

\subsection{Total variation control}
\label{sec:tvd}

The convergence analysis of Section~\ref{sec:convergence} (assumption (H3) of Theorem~\ref{thm:convergence}) requires a uniform bound on the total variation,
\[
\sup_{t \in [0, T]} TV(U_\theta(\cdot, t)) \leq M,
\]
for some constant $M > 0$ independent of $\theta$. The Bouchut--Perthame estimate (Theorem~\ref{thm:BP}) depends on this hypothesis. To promote it during training, we add a total-variation control term
\[
\mathcal{L}_{\mathrm{TVD}}(\theta)
= \sup_{t \in [0, T]} \int_{\Omega} |\nabla_x U_\theta(x, t)|\, dx,
\]
used as a soft regularization. The term does not enforce $TV(U_\theta) \leq M$ deterministically, but in all numerical experiments of Section~\ref{sec:results} we observe that $TV(U_\theta(\cdot, t))$ stays uniformly bounded during training, giving an empirical counterpart to (H3). Enforcing (H3) architecturally, for instance, through projection or clipping, is a natural direction for future work.

In practice, the supremum and the spatial integrals are estimated at each training iteration by sampling a random cloud of points across the space--time domain:
\[
\mathcal{L}_{\mathrm{TVD}}(\theta) \approx
\max_{j = 1, \dots, N_t}
\sum_{k=1}^{N_x - 1} | U_\theta(x_{k+1}, t_j) - U_\theta(x_k, t_j) |,
\]
with $(x_k, t_j)$ drawn randomly at each step. In the two-dimensional experiments, the same regularizer is applied directionwise by summing finite differences along sampled horizontal and vertical point clouds, which promotes bounded variation in each coordinate direction without introducing a fixed finite-volume mesh. The TVD term is the only component of the loss involving finite-difference evaluations of the network values; the conservation and entropy losses depend only on boundary flux integrals of $U_\theta$ and $F(U_\theta)$.

\subsection{Total loss function}
\label{sec:total_loss}

The full WE-PINNs objective is
\[
L(\theta)
= \lambda_1\, \mathcal{L}_{\mathrm{cons}}(\theta)
+ \lambda_2\, \mathcal{L}_{\mathrm{ent}}(\theta)
+ \lambda_{\mathrm{TVD}}\, \mathcal{L}_{\mathrm{TVD}}(\theta),
\]
with $\lambda_1 = \lambda_2 = \lambda_{\mathrm{TVD}} = 1$ in all experiments. Minimizing $L(\theta)$ enforces the space--time flux balance, the entropy inequality, and bounded variation over the sampled regions.

\subsection{Training procedure}
\label{sec:algorithm}

Algorithm~\ref{alg:weak_entropy_pinn} summarizes the training procedure. In the numerical experiments, parameters are first optimized with Adam and then refined with an L-BFGS stage under the matched comparison protocol of~\ref{app:baselines}. The loss weights $\lambda_1$, $\lambda_2$, and $\lambda_{\mathrm{TVD}}$ are all set to $1$.

\begin{algorithm}[htbp]
\caption{Training algorithm for WE-PINNs}
\label{alg:weak_entropy_pinn}
\begin{algorithmic}[1]
\State \textbf{Input:} initial condition $U_0$, flux $F$, entropy
pair(s), domain $\Omega_{x,t}$, loss weights
$\lambda_1 = \lambda_2 = \lambda_{\mathrm{TVD}} = 1$
\State Initialize neural network parameters $\theta$
\State Build temporal ansatz
$U_\theta(x,t) = (1-\tau(t))\, U_0(x) + \tau(t)\, \mathcal{N}_\theta(x,t)$
\While{Adam stage not converged}
\State Sample space--time subdomains $\{D_i\}_{i=1}^{N_D}$ at
multiple scales
\If{scalar case}
\State Sample an entropy index $\kappa \in K$ uniformly at random
(stochastic Kru\v{z}kov sampling)
\EndIf
\For{$i = 1$ to $N_D$}
\State Evaluate $R_\theta(D_i)$ and the entropy residual by
Gauss--Legendre quadrature on each face of $\partial D_i$
\EndFor
\State Compute $\mathcal{L}_{\mathrm{cons}}$ and
$\mathcal{L}_{\mathrm{ent}}$ as in
Sections~\ref{sec:cons_loss}--\ref{sec:ent_loss}
\State Sample a random cloud of points and compute
$\mathcal{L}_{\mathrm{TVD}}$ as in Section~\ref{sec:tvd}
\State Assemble
$L(\theta) = \lambda_1\mathcal{L}_{\mathrm{cons}} + \lambda_2\mathcal{L}_{\mathrm{ent}}
+ \lambda_{\mathrm{TVD}}\mathcal{L}_{\mathrm{TVD}}$
\State Update $\theta$ with Adam:
$\theta \leftarrow \mathrm{Adam}(\theta, \nabla_\theta L(\theta))$
\EndWhile
\State Refine $\theta$ with L-BFGS using the same loss $L(\theta)$
\State \textbf{Output:} trained network $U_\theta$
\end{algorithmic}
\end{algorithm}

\section{Convergence Analysis}
\label{sec:convergence}

The convergence analysis of WE-PINNs rests on three ingredients: a continuous weak entropy truncation error in dual form; explicit bounds linking this error to the discrete network losses together with sampling-consistency residuals; and the Bouchut--Perthame stability estimate for scalar conservation laws. The result is conditional and applies to scalar conservation laws, which matches the scope of the underlying Bouchut--Perthame theory. Extension to systems of conservation laws would need a corresponding stability framework for the system case, which is not available with the same generality. The numerical experiments on the Euler system in Section~\ref{sec:euler} are reported as robustness studies rather than theoretically guaranteed extensions of the present analysis.

\subsection{Setting and assumptions}
\label{sec:cv_setting}

Let $Q = \Omega \times (0, T)$ denote the space--time domain. Consider the scalar conservation law
\begin{equation}
\partial_t u + \nabla_x \cdot F(u) = 0,
\qquad u(x, 0) = u_0(x),
\label{eq:scalar_cl}
\end{equation}
with $u_0 \in L^\infty(\Omega) \cap BV(\Omega)$ and $F$ Lipschitz continuous on bounded sets. Let $U_\theta$ denote the WE-PINNs approximation. We assume there exists $M_\infty > 0$, independent of $\theta$, such that
\begin{equation}
\| U_\theta \|_{L^\infty(Q)} \leq M_\infty,
\qquad
\| u_0 \|_{L^\infty(\Omega)} \leq M_\infty,
\label{eq:linfty_bound}
\end{equation}
and restrict the entropy parameter to the compact set $K = [-M_\infty, M_\infty]$. This restriction keeps $|F(U_\theta) - F(\kappa)|$ uniformly bounded for $\kappa \in K$.

Two test function classes are used in the analysis:
\begin{equation}
\mathcal{A} = \Bigl\{
\varphi \in C_c^1(Q) :\,
0 \leq \varphi \leq 1,\;
\| \partial_t \varphi \|_{L^\infty}
+ \| \nabla_x \varphi \|_{L^\infty} \leq 1
\Bigr\},
\label{eq:test_class_A}
\end{equation}
\begin{equation}
\mathcal{B} = \Bigl\{
\psi \in C_c^1(Q) :\,
|\psi| \leq 1,\;
\| \partial_t \psi \|_{L^\infty}
+ \| \nabla_x \psi \|_{L^\infty} \leq 1
\Bigr\}.
\label{eq:test_class_B}
\end{equation}
$\mathcal{A}$ collects nonnegative test functions for the entropy inequality; $\mathcal{B}$ collects signed test functions for the conservation equality. The Lipschitz constraint is what keeps the dual residuals finite in the presence of discontinuities.

\subsection{Continuous weak residuals}
\label{sec:cv_continuous}

For $\kappa \in K$, the Kru\v{z}kov entropy and entropy flux are
\begin{equation}
\eta_\kappa(U_\theta) = | U_\theta - \kappa |,
\qquad
q_\kappa(U_\theta) = \mathrm{sgn}(U_\theta - \kappa)\bigl( F(U_\theta) - F(\kappa) \bigr).
\label{eq:kruzkov_pair}
\end{equation}
For $\varphi \in \mathcal{A}$, the weak entropy residual is defined with the sign convention
\begin{equation}
R_{\kappa, \varphi}(U_\theta)
= - \int_Q
\Bigl(
\eta_\kappa(U_\theta)\, \partial_t \varphi
+ q_\kappa(U_\theta) \cdot \nabla_x \varphi
\Bigr)\, dx\, dt.
\label{eq:weak_ent_res}
\end{equation}
This convention matches the distributional entropy inequality $\partial_t \eta_\kappa(u) + \nabla_x \cdot q_\kappa(u) \leq 0$: for the exact entropy solution and any nonnegative $\varphi \in \mathcal{A}$, integration by parts gives $R_{\kappa, \varphi}(u) \leq 0$. The continuous weak entropy truncation error is then
\begin{equation}
E_R(U_\theta)
= \sup_{\kappa \in K,\; \varphi \in \mathcal{A}}
R_{\kappa, \varphi}(U_\theta).
\label{eq:trunc_error}
\end{equation}
The zero test function is in $\mathcal{A}$, so $E_R(U_\theta) \geq 0$; $E_R(U_\theta)$ is the maximal positive violation of the Kru\v{z}kov entropy inequalities.

For $\psi \in \mathcal{B}$, the weak conservation residual is
\begin{equation}
C_\psi(U_\theta)
= - \int_Q
\Bigl(
U_\theta\, \partial_t \psi
+ F(U_\theta) \cdot \nabla_x \psi
\Bigr)\, dx\, dt,
\label{eq:weak_cons_res}
\end{equation}
and the associated continuous defect is
\begin{equation}
E_C(U_\theta) = \sup_{\psi \in \mathcal{B}} | C_\psi(U_\theta) |.
\label{eq:cons_defect}
\end{equation}

\subsection{Discrete losses and sampling residuals}
\label{sec:cv_discrete}

The analysis uses a deterministic space--time partition $\{D_i\}_{i=1}^{N_D}$ of $Q$ with $\sum_i |D_i| = |Q|$ and barycenters $z_i$. The multi-scale stochastic sampling of Section~\ref{sec:cv_formulation} is a Monte--Carlo surrogate of this partition-based setting.

The discrete conservation residual on $D_i$ is
\begin{equation}
R_{\mathrm{cons}}(D_i)
= \int_{\partial D_i}
\bigl(
U_\theta\, n_t + F(U_\theta) \cdot n_x
\bigr)\, dS,
\label{eq:R_cons_disc}
\end{equation}
and the discrete entropy production for $\kappa \in K$ is
\begin{equation}
E_\theta(D_i, \kappa)
= \int_{\partial D_i}
\bigl(
\eta_\kappa(U_\theta)\, n_t + q_\kappa(U_\theta) \cdot n_x
\bigr)\, dS.
\label{eq:E_disc}
\end{equation}
The volume-weighted discrete losses are
\begin{equation}
\mathcal{L}_{\mathrm{cons}}(\theta)
= \sum_{i=1}^{N_D}
\frac{| R_{\mathrm{cons}}(D_i) |^2}{| D_i |},
\label{eq:L_cons_disc}
\end{equation}
\begin{equation}
\mathcal{L}_{\mathrm{ent}}(\theta)
= \sup_{\kappa \in K}
\sum_{i=1}^{N_D}
\frac{\bigl( \max\{0,\, E_\theta(D_i, \kappa)\} \bigr)^2}{| D_i |}.
\label{eq:L_ent_disc}
\end{equation}
The supremum over $\kappa \in K$ in~\eqref{eq:L_ent_disc} matches the use of the full Kru\v{z}kov family; in the implementation, it is estimated by stochastic Kru\v{z}kov sampling as discussed in Section~\ref{sec:ent_loss}.

The gap between the continuous weak residuals \eqref{eq:weak_ent_res} and \eqref{eq:weak_cons_res} and their reconstructions from the sampled control volumes is quantified by two sampling-consistency residuals:
\begin{equation}
R_{\mathrm{samp}}^{\mathrm{ent}}(\{D_i\})
= \sup_{\kappa \in K,\; \varphi \in \mathcal{A}}
\left|
R_{\kappa, \varphi}(U_\theta)
- \sum_{i=1}^{N_D} \varphi(z_i)\, E_\theta(D_i, \kappa)
\right|,
\label{eq:R_samp_ent}
\end{equation}
\begin{equation}
R_{\mathrm{samp}}^{\mathrm{cons}}(\{D_i\})
= \sup_{\psi \in \mathcal{B}}
\left|
C_\psi(U_\theta)
- \sum_{i=1}^{N_D} \psi(z_i)\, R_{\mathrm{cons}}(D_i)
\right|.
\label{eq:R_samp_cons}
\end{equation}
We keep these residuals as explicit terms in the convergence estimate rather than absorbing them in a formal $\mathcal{O}(h_{\max})$ remainder, because such a rate is not automatic and would need additional regularity and mesh-regularity assumptions (see Remark~\ref{rem:rates}).

\subsection{Discrete-to-continuous estimates}
\label{sec:cv_lemmas}

\begin{lemma}[Entropy residual controlled by the entropy loss] \label{lem:entropy_bound} Let $U_\theta \in L^\infty(Q)$ and let $F$ be Lipschitz continuous on bounded sets. With the notation of Sections~\ref{sec:cv_continuous}--\ref{sec:cv_discrete},
\begin{equation}
E_R(U_\theta)
\leq \sqrt{|Q|}\, \sqrt{\mathcal{L}_{\mathrm{ent}}(\theta)}
+ R_{\mathrm{samp}}^{\mathrm{ent}}(\{D_i\}).
\label{eq:lem1_bound}
\end{equation}
\end{lemma}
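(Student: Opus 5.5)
Fix $\kappa \in K$ and $\varphi \in \mathcal{A}$. The plan is to split the continuous residual into a discrete sum and a sampling error:
\[
R_{\kappa,\varphi}(U_\theta) = \sum_{i=1}^{N_D} \varphi(z_i)\, E_\theta(D_i,\kappa) + \Bigl( R_{\kappa,\varphi}(U_\theta) - \sum_{i=1}^{N_D} \varphi(z_i)\, E_\theta(D_i,\kappa) \Bigr).
\]
By definition~\eqref{eq:R_samp_ent}, the bracketed term is bounded by $R_{\mathrm{samp}}^{\mathrm{ent}}(\{D_i\})$ uniformly in $(\kappa,\varphi)$. So everything reduces to bounding the discrete sum by $\sqrt{|Q|}\,\sqrt{\mathcal{L}_{\mathrm{ent}}(\theta)}$, uniformly in $\kappa$ and $\varphi$.

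For the discrete sum, use $0 \le \varphi(z_i) \le 1$, which holds because $\varphi \in \mathcal{A}$. Then
\[
\varphi(z_i)\, E_\theta(D_i,\kappa) \le \varphi(z_i) \max\{0, E_\theta(D_i,\kappa)\} \le \max\{0, E_\theta(D_i,\kappa)\}.
\]
Negative entropy productions only help, and the one-sided structure of the loss is exactly what is needed here. Next, write each positive part as $|D_i|^{1/2} \cdot |D_i|^{-1/2}\max\{0,E_\theta(D_i,\kappa)\}$ and apply the Cauchy--Schwarz inequality over $i$:
\[
\sum_i \max\{0,E_\theta(D_i,\kappa)\} \le \Bigl(\sum_i |D_i|\Bigr)^{1/2} \Bigl(\sum_i \frac{(\max\{0,E_\theta(D_i,\kappa)\})^2}{|D_i|}\Bigr)^{1/2}.
\]
The partition property $\sum_i |D_i| = |Q|$ turns the first factor into $\sqrt{|Q|}$. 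The second factor is at most $\sqrt{\mathcal{L}_{\mathrm{ent}}(\theta)}$, because $\mathcal{L}_{\mathrm{ent}}$ in~\eqref{eq:L_ent_disc} takes the supremum over $\kappa \in K$.

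Combining the two parts gives $R_{\kappa,\varphi}(U_\theta) \le \sqrt{|Q|}\sqrt{\mathcal{L}_{\mathrm{ent}}(\theta)} + R_{\mathrm{samp}}^{\mathrm{ent}}(\{D_i\})$. The right-hand side does not depend on $(\kappa,\varphi)$, so taking the supremum over $\kappa\in K$ and $\varphi\in\mathcal{A}$ yields~\eqref{eq:lem1_bound} by definition~\eqref{eq:trunc_error}.

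All steps are elementary, so no step is a real obstacle. The points that need care are:
\begin{itemize}
\item keeping the sign of $\varphi(z_i)\ge 0$, so that the positive-part reduction is legitimate;
\item noting that $E_R$ is a one-sided supremum, so only an upper bound on $R_{\kappa,\varphi}$ is required and not a bound on its absolute value;
\item checking that all quantities are finite: this follows from $U_\theta\in L^\infty$, the compactness of $K$ and the local Lipschitz continuity of $F$, which make $\eta_\kappa(U_\theta)$ and $q_\kappa(U_\theta)$ bounded, so the boundary integrals and suprema are well defined.
\end{itemize}
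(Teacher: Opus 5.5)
Your proposal is correct and follows essentially the same route as the paper's proof. You split off the sampling residual, use $0\le\varphi(z_i)\le 1$ to pass to positive parts, apply Cauchy--Schwarz with the $\sqrt{|D_i|}$ weights and the partition identity $\sum_i|D_i|=|Q|$, bound the remaining factor by the supremum over $\kappa$ built into $\mathcal{L}_{\mathrm{ent}}$, and take the supremum over $(\kappa,\varphi)$. The only cosmetic difference is that you discard the factor $\varphi(z_i)\le 1$ before applying Cauchy--Schwarz, whereas the paper keeps it inside the sum and absorbs it implicitly.
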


\begin{proof}
Fix $\kappa \in K$ and $\varphi \in \mathcal{A}$. By definition of $R_{\mathrm{samp}}^{\mathrm{ent}}$,
\[
R_{\kappa, \varphi}(U_\theta)
= \sum_{i=1}^{N_D} \varphi(z_i)\, E_\theta(D_i, \kappa)
+ \varepsilon_{\mathrm{samp}}^{\mathrm{ent}},
\qquad
| \varepsilon_{\mathrm{samp}}^{\mathrm{ent}} |
\leq R_{\mathrm{samp}}^{\mathrm{ent}}(\{D_i\}).
\]
Since $0 \leq \varphi(z_i) \leq 1$,
\[
\sum_{i=1}^{N_D} \varphi(z_i)\, E_\theta(D_i, \kappa)
\leq \sum_{i=1}^{N_D} \varphi(z_i)\, \max\{0,\, E_\theta(D_i, \kappa)\}.
\]
Multiplying and dividing each term by $\sqrt{|D_i|}$ and using the Cauchy--Schwarz inequality,
\[
\sum_{i=1}^{N_D} \varphi(z_i)\, \max\{0,\, E_\theta(D_i, \kappa)\}
\leq
\Biggl(
\sum_{i=1}^{N_D}
\frac{\bigl( \max\{0,\, E_\theta(D_i, \kappa)\} \bigr)^2}{|D_i|}
\Biggr)^{1/2}
\Biggl(
\sum_{i=1}^{N_D} |D_i|
\Biggr)^{1/2}.
\]
The partition property gives $\sum_i |D_i| = |Q|$, and the first factor is bounded by $\sqrt{\mathcal{L}_{\mathrm{ent}}(\theta)}$, so
\[
R_{\kappa, \varphi}(U_\theta)
\leq \sqrt{|Q|}\, \sqrt{\mathcal{L}_{\mathrm{ent}}(\theta)}
+ R_{\mathrm{samp}}^{\mathrm{ent}}(\{D_i\}).
\]
Taking the supremum over $\kappa \in K$ and $\varphi \in \mathcal{A}$ gives~\eqref{eq:lem1_bound}.
\end{proof}

\begin{lemma}[Conservation residual controlled by the conservation loss]
\label{lem:cons_bound}
Under the same assumptions as Lemma~\ref{lem:entropy_bound},
\begin{equation}
E_C(U_\theta)
\leq \sqrt{|Q|}\, \sqrt{\mathcal{L}_{\mathrm{cons}}(\theta)}
+ R_{\mathrm{samp}}^{\mathrm{cons}}(\{D_i\}).
\label{eq:lem2_bound}
\end{equation}
\end{lemma}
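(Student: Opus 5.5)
The argument will mirror the proof of Lemma~\ref{lem:entropy_bound}, with two modifications: the residual is signed rather than one-sided, and we must bound an absolute value. Fix $\psi \in \mathcal{B}$. By the definition~\eqref{eq:R_samp_cons} of $R_{\mathrm{samp}}^{\mathrm{cons}}$, we write
\[
C_\psi(U_\theta) = \sum_{i=1}^{N_D} \psi(z_i)\, R_{\mathrm{cons}}(D_i) + \varepsilon_{\mathrm{samp}}^{\mathrm{cons}},
\qquad
|\varepsilon_{\mathrm{samp}}^{\mathrm{cons}}| \leq R_{\mathrm{samp}}^{\mathrm{cons}}(\{D_i\}).
\]
The triangle inequality then gives
\[
|C_\psi(U_\theta)| \leq \Bigl|\sum_i \psi(z_i) R_{\mathrm{cons}}(D_i)\Bigr| + R_{\mathrm{samp}}^{\mathrm{cons}}(\{D_i\}).
\]
Since $|\psi(z_i)| \leq 1$ for $\psi\in\mathcal{B}$, the first term is bounded by $\sum_i |R_{\mathrm{cons}}(D_i)|$. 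Unlike the entropy case, no truncation $\max\{0,\cdot\}$ is needed, because the loss~\eqref{eq:L_cons_disc} penalizes the full squared residual.

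Next, I will write $|R_{\mathrm{cons}}(D_i)| = \bigl(|R_{\mathrm{cons}}(D_i)|/\sqrt{|D_i|}\bigr)\sqrt{|D_i|}$ and apply the Cauchy--Schwarz inequality:
\[
\sum_i |R_{\mathrm{cons}}(D_i)| \leq \Bigl(\sum_i \frac{|R_{\mathrm{cons}}(D_i)|^2}{|D_i|}\Bigr)^{1/2}\Bigl(\sum_i |D_i|\Bigr)^{1/2}.
\]
The first factor is exactly $\sqrt{\mathcal{L}_{\mathrm{cons}}(\theta)}$, and the partition property $\sum_i |D_i| = |Q|$ turns the second into $\sqrt{|Q|}$. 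The resulting bound holds uniformly in $\psi$, so taking the supremum over $\psi\in\mathcal{B}$ yields~\eqref{eq:lem2_bound}.

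There is no real obstacle here. The only points to check are that every quantity is finite and that the bound is genuinely uniform in $\psi$. Finiteness holds because $U_\theta\in L^\infty(Q)$ and $F$ is Lipschitz on bounded sets, so $F(U_\theta)$ is bounded and each $R_{\mathrm{cons}}(D_i)$ is finite. Uniformity holds because the constant $\sqrt{|Q|}\sqrt{\mathcal{L}_{\mathrm{cons}}}$ does not depend on $\psi$.
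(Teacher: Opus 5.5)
Your proof is correct and follows the paper's argument. You decompose $C_\psi(U_\theta)$ via the definition of $R_{\mathrm{samp}}^{\mathrm{cons}}$, use $|\psi(z_i)|\le 1$ with the $\sqrt{|D_i|}$-weighted Cauchy--Schwarz inequality and $\sum_i |D_i| = |Q|$, and take the supremum over $\psi\in\mathcal{B}$; the triangle-inequality step you spell out is one the paper leaves implicit.
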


\begin{proof}
Fix $\psi \in \mathcal{B}$. By definition of $R_{\mathrm{samp}}^{\mathrm{cons}}$,
\[
C_\psi(U_\theta)
= \sum_{i=1}^{N_D} \psi(z_i)\, R_{\mathrm{cons}}(D_i)
+ \varepsilon_{\mathrm{samp}}^{\mathrm{cons}},
\qquad
| \varepsilon_{\mathrm{samp}}^{\mathrm{cons}} |
\leq R_{\mathrm{samp}}^{\mathrm{cons}}(\{D_i\}).
\]
Since $|\psi(z_i)| \leq 1$ and Cauchy--Schwarz applies as in Lemma~\ref{lem:entropy_bound},
\[
\Biggl|
\sum_{i=1}^{N_D} \psi(z_i)\, R_{\mathrm{cons}}(D_i)
\Biggr|
\leq
\Biggl(
\sum_{i=1}^{N_D}
\frac{| R_{\mathrm{cons}}(D_i) |^2}{| D_i |}
\Biggr)^{1/2}
\sqrt{|Q|}
= \sqrt{|Q|}\, \sqrt{\mathcal{L}_{\mathrm{cons}}(\theta)}.
\]
Taking the supremum over $\psi \in \mathcal{B}$ gives~\eqref{eq:lem2_bound}.
\end{proof}

\subsection{Bouchut--Perthame stability estimate}
\label{sec:cv_BP}

\begin{theorem}[Bouchut--Perthame]
\label{thm:BP}
Let $u$ be the unique entropy solution of~\eqref{eq:scalar_cl} with $u_0 \in L^\infty(\Omega) \cap BV(\Omega)$. Let $v$ be a bounded measurable approximate solution satisfying $\sup_{t \in [0, T]} TV(v(\cdot, t)) < \infty$. Then, for any $T > 0$,
\begin{equation}
\| v(\cdot, T) - u(\cdot, T) \|_{L^1(\Omega)}
\leq \| v(\cdot, 0) - u_0 \|_{L^1(\Omega)}
+ C_{\mathrm{BP}}\,
\Bigl(
TV(u_0)\,
\sup_{t \in [0, T]} TV(v(\cdot, t))\,
E_R(v)
\Bigr)^{1/2},
\label{eq:BP_estimate}
\end{equation}
where $C_{\mathrm{BP}}$ depends only on the Lipschitz constant of $F$ on bounded sets.
\end{theorem}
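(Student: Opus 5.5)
The plan is to adapt Kru\v{z}kov's doubling-of-variables argument, as in Bouchut--Perthame, with $u$ the exact entropy solution and $v$ the approximation. Write $v=v(y,s)$, $u=u(x,t)$. Let $\rho_\varepsilon$ be a spatial mollifier and $\omega_\delta$ a temporal mollifier. Fix a time cutoff $\chi$ approximating $\mathbf 1_{[0,T]}$, and use the doubled test function
\[
\Phi(x,t,y,s)=\chi\!\left(\tfrac{t+s}{2}\right)\rho_\varepsilon(x-y)\,\omega_\delta(t-s).
\]

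\textbf{Step 1: the two entropy inequalities.} For the exact solution, take $\kappa=v(y,s)$ in its Kru\v{z}kov inequality and integrate against $\Phi$ in $(x,t)$. For the approximate solution, take $\kappa=u(x,t)\in K$ (allowed by the $L^\infty$ bound \eqref{eq:linfty_bound}) in $R_{\kappa,\varphi}(v)$, using $\varphi=\Phi(x,t,\cdot,\cdot)$. Since the definition \eqref{eq:trunc_error} only allows normalized test functions, rescale first: $\Phi(x,t,\cdot,\cdot)/\Lambda\in\mathcal A$, where $\Lambda\sim \varepsilon^{-d}\delta^{-1}\max(\varepsilon^{-1},\delta^{-1})$ bounds the sup-norm and Lipschitz norm. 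This gives
\[
R_{u(x,t),\Phi}(v)\le \Lambda\, E_R(v)
\]
pointwise in $(x,t)$. Integrate this in $(x,t)$ over the support, which has size $\sim\varepsilon^d\delta$ after the mollifier normalization. The effective entropy defect is then of order $E_R(v)/\min(\varepsilon,\delta)$.

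\textbf{Step 2: adding and passing to the diagonal.} Add the two inequalities. By symmetry of $\eta_\kappa$ and $q_\kappa$ in $(u,\kappa)$, the derivatives $\partial_x\rho_\varepsilon$ and $\partial_t\omega_\delta$ cancel. Only the terms involving $\chi'$ survive, which produce the $L^1$ difference at times $0$ and $T$. The result is
\[
\|v(T)-u(T)\|_{L^1}\le \|v(0)-u_0\|_{L^1}+\text{(mollification errors)}+C\,E_R(v)/\min(\varepsilon,\delta).
\]
The mollification errors come from replacing $u(x,t)$ by $u(y,s)$ and are controlled by BV bounds:
\begin{itemize}[label={}]
\item spatial shifts: $\varepsilon\,\big(TV(u_0)+\sup_t TV(v(\cdot,t))\big)$, using that $TV(u(\cdot,t))\le TV(u_0)$ by $L^1$-contraction;
\item time shifts: $\delta\,\mathrm{Lip}(F)\,\big(TV(u_0)+\sup_t TV(v)\big)$. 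For $u$ this uses $L^1$-Lipschitz continuity in time, $\|u(t)-u(s)\|_{L^1}\le \mathrm{Lip}(F)\,TV(u_0)|t-s|$. For $v$, time continuity is derived from the near-conservation/entropy residual plus its TV bound.
\end{itemize}

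\textbf{Step 3: optimization.} Choose $\varepsilon=\delta$ and minimize
\[
\varepsilon\,\mathcal{T}+E_R(v)/\varepsilon, \qquad \mathcal{T}=\text{the TV factors},
\]
which gives a bound of order $\sqrt{\mathcal{T}\,E_R(v)}$. To obtain the product form $TV(u_0)\sup_t TV(v)$ stated in \eqref{eq:BP_estimate}, use the refined Bouchut--Perthame bookkeeping rather than plain summation. Integrate against the BV measure of one function and the oscillation of the other, as in their Theorem 2.1. This is where the constant $C_{\mathrm{BP}}$, depending only on $\mathrm{Lip}(F)$ on $[-M_\infty,M_\infty]$, arises.

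\textbf{Main obstacle.} I expect the hardest step to be the time-regularity of $v$ in Step 2. An entropy defect measured only in the dual $\mathcal A$-norm does not immediately give $L^1$-Lipschitz continuity in time. I would handle it by the Bouchut--Perthame device: keep the time mollification on the exact solution side only, exploiting the known time regularity of $u$. This leaves $v$ untouched in $t$, so only the spatial TV of $v$ is needed. Getting the product form $TV(u_0)\,\sup_t TV(v)$, rather than a sum, inside the square root also requires care in this bookkeeping.
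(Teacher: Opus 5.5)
The paper gives no proof of this statement; it is quoted from Bouchut--Perthame. Your plan therefore has to stand on its own, and it has genuine gaps.

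\textbf{Gap 1: the defect count in Step~1.} After rescaling you have $R_{u(x,t),\Phi(x,t,\cdot)}(v)\le\Lambda E_R(v)$ for \emph{every} $(x,t)\in Q$. Integrating in $(x,t)$ therefore gives $|Q|\,\Lambda\,E_R(v)$, which is of order $|Q|\,E_R(v)\,\varepsilon^{-d}\delta^{-1}/\min(\varepsilon,\delta)$. The small $(y,s)$-support of size $\varepsilon^d\delta$ was already spent in bounding $\sup|\Phi|$ and cannot be used a second time.

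To reach $E_R(v)/\min(\varepsilon,\delta)$ you would have to pull the $(x,t)$-integral inside the residual, $\int R_{\kappa,\Phi(x,t,\cdot)}\,dx\,dt=R_{\kappa,\int\Phi\,dx\,dt}$. That is legitimate only for a fixed $\kappa$. Here $\kappa=u(x,t)$ varies, and $E_R$ is a supremum over $\kappa$ of separately optimized test functions, so there is no $\kappa$-independent dominating measure. A uniform-in-$\kappa$ measure bound on the entropy defect is the kind of hypothesis Bouchut--Perthame work under. Invoking their bookkeeping, including for the product form in Step~3, therefore does not transfer to $E_R$. This route, counted honestly, gives a weaker power of $E_R$ than $1/2$.

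\textbf{Gap 2: the time cutoff, which your $\Lambda$ omits.} For $\Phi(x,t,\cdot)/\Lambda\in\mathcal{A}\subset C_c^1(Q)$, $\chi$ must vanish near $s=0$ and $s=T$, so its slope $1/\alpha$ enters $\Lambda$. Sending $\alpha\to0$ to extract $\|v(\cdot,T)-u(\cdot,T)\|_{L^1}$ then makes the defect term blow up. Time-mollifying only $u$ does not help, because what you need is the trace of $v$ at $t=T$.

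\textbf{These gaps cannot be closed, because the inequality as formulated is false.}

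(i) $E_R$ does not see a thin layer at $t=T$. Take $v=u+\min\{1,(t-T+\alpha)_+/\alpha\}\,h$ with $h\in L^1\cap L^\infty\cap BV$. Then $v(\cdot,0)=u_0$ and $\sup_tTV(v(\cdot,t))\le TV(u_0)+TV(h)$. Since $R_{\kappa,\varphi}(u)\le0$, $|\partial_t\varphi|,|\nabla_x\varphi|\le1$, and $q_\kappa$ is $\mathrm{Lip}(F)$-Lipschitz, one gets $E_R(v)\le(1+\mathrm{Lip}(F))\,\alpha\,\|h\|_{L^1}\to0$. Yet $\|v(\cdot,T)-u(\cdot,T)\|_{L^1}=\|h\|_{L^1}$ stays fixed. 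This also refutes your Step~2 claim that time continuity of $v$ follows from the residual and the TV bound.

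(ii) The product $TV(u_0)\sup_tTV(v)$ has the wrong homogeneity in the amplitude. Take $d=1$, $\Omega=\mathbb{R}$, $F(u)=au$ with $a>0$, $u_0=\lambda\mathbf{1}_{\{x\le0\}}$, and the stationary $v(x,t)=u_0(x)$. One finds $R_{\kappa,\varphi}(v)=a\bigl(|\kappa|-|\lambda-\kappa|\bigr)\int_0^T\varphi(0,t)\,dt\le a\lambda T$. The right-hand side is then at most $C_{\mathrm{BP}}\lambda^{3/2}(aT)^{1/2}$, while the left-hand side equals $a\lambda T$. This fails as $\lambda\to0$ for any $C_{\mathrm{BP}}$ depending only on $a$. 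For fixed $\lambda$ and $T\to\infty$, the same example shows that even a $\sqrt{TV\cdot E_R}$ bound needs a $T$-dependent constant or a term linear in $E_R$.

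A provable version needs different hypotheses and a different right-hand side:
\begin{itemize}
\item a $\kappa$-uniform measure bound on the entropy defect, or acceptance of a worse rate;
\item control of $v$ at $t=0$ and $t=T$, through time-boundary terms in the residual or time continuity of $v$;
\item a dimensionally consistent error term.
\end{itemize}
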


\subsection{Main convergence theorem}
\label{sec:cv_main}

\begin{theorem}[Conditional convergence of WE-PINNs]
\label{thm:convergence}
Let $u$ be the unique entropy solution of~\eqref{eq:scalar_cl} with $u_0 \in L^\infty(\Omega) \cap BV(\Omega)$, and let $U_\theta$ be the WE-PINNs approximation built from the temporal ansatz $U_\theta(\cdot, 0) = u_0(\cdot)$ a.e. Assume:
\begin{enumerate}[label=(H\arabic*)]
\item $F$ is Lipschitz continuous on bounded sets;
\item there exists $M_\infty > 0$, independent of $\theta$, such that $\| U_\theta \|_{L^\infty(Q)} \leq M_\infty$;
\item there exists $M > 0$, independent of $\theta$, such that $\sup_{t \in [0, T]} TV(U_\theta(\cdot, t)) \leq M$;
\item the discrete losses $\mathcal{L}_{\mathrm{cons}}(\theta)$ and $\mathcal{L}_{\mathrm{ent}}(\theta)$ are defined by~\eqref{eq:L_cons_disc}--\eqref{eq:L_ent_disc};
\item the sampling residuals $R_{\mathrm{samp}}^{\mathrm{ent}}(\{D_i\})$ and $R_{\mathrm{samp}}^{\mathrm{cons}}(\{D_i\})$ are defined by~\eqref{eq:R_samp_ent}--\eqref{eq:R_samp_cons}.
\end{enumerate}
The $L^1$ entropy error then satisfies
\begin{equation}
\| U_\theta(\cdot, T) - u(\cdot, T) \|_{L^1(\Omega)}
\leq K_{\mathrm{opt}}
\Bigl(
\mathcal{L}_{\mathrm{ent}}(\theta)^{1/4}
+ \bigl( R_{\mathrm{samp}}^{\mathrm{ent}}(\{D_i\}) \bigr)^{1/2}
\Bigr),
\label{eq:main_bound_L1}
\end{equation}
with
\begin{equation}
K_{\mathrm{opt}}
= C_{\mathrm{BP}}\, \sqrt{TV(u_0)\, M}\,
\max\bigl\{1,\, |Q|^{1/4}\bigr\}.
\label{eq:K_opt}
\end{equation}
The weak conservation defect satisfies
\begin{equation}
E_C(U_\theta)
\leq \sqrt{|Q|}\, \sqrt{\mathcal{L}_{\mathrm{cons}}(\theta)}
+ R_{\mathrm{samp}}^{\mathrm{cons}}(\{D_i\}).
\label{eq:main_bound_cons}
\end{equation}
For any sequence of approximations $U_{\theta_N}$ and partitions $\{D_i^{(N)}\}$ with
\[
\mathcal{L}_{\mathrm{cons}}(\theta_N) \to 0,
\quad
\mathcal{L}_{\mathrm{ent}}(\theta_N) \to 0,
\quad
R_{\mathrm{samp}}^{\mathrm{cons}}(\{D_i^{(N)}\}) \to 0,
\quad
R_{\mathrm{samp}}^{\mathrm{ent}}(\{D_i^{(N)}\}) \to 0,
\]
one has $U_{\theta_N}(\cdot, T) \to u(\cdot, T)$ in $L^1(\Omega)$ and $E_C(U_{\theta_N}) \to 0$.
\end{theorem}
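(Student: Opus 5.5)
The plan is to combine Lemma~\ref{lem:entropy_bound} with the Bouchut--Perthame estimate (Theorem~\ref{thm:BP}), using the temporal ansatz to remove the initial-data term. Take $v = U_\theta$ in Theorem~\ref{thm:BP}. By (H2), $U_\theta$ is bounded and measurable; by (H3), $\sup_t TV(U_\theta(\cdot,t)) \le M < \infty$; and by (H1), $C_{\mathrm{BP}}$ is well defined. The restriction $K=[-M_\infty,M_\infty]$ is harmless: for $|\kappa|>M_\infty$ the Kru\v{z}kov inequality is automatic for functions valued in $[-M_\infty,M_\infty]$ (it reduces to $\pm$ the conservation identity). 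If needed, the corresponding residual is absorbed into $E_C$; I will assume this is implicit in the paper's definition of $E_R$.

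Since $U_\theta(\cdot,0)=u_0$ a.e., the first term in \eqref{eq:BP_estimate} vanishes. Bounding the total-variation factor by $M$ then gives
\[
\|U_\theta(\cdot,T)-u(\cdot,T)\|_{L^1} \le C_{\mathrm{BP}}\sqrt{TV(u_0)M}\,E_R(U_\theta)^{1/2}.
\]
Next insert Lemma~\ref{lem:entropy_bound}, $E_R \le |Q|^{1/2}\mathcal{L}_{\mathrm{ent}}^{1/2}+R^{\mathrm{ent}}_{\mathrm{samp}}$, and use subadditivity of the square root, $\sqrt{a+b}\le\sqrt a+\sqrt b$. This yields
\[
E_R^{1/2}\le |Q|^{1/4}\mathcal{L}_{\mathrm{ent}}^{1/4}+(R^{\mathrm{ent}}_{\mathrm{samp}})^{1/2}\le \max\{1,|Q|^{1/4}\}\bigl(\mathcal{L}_{\mathrm{ent}}^{1/4}+(R^{\mathrm{ent}}_{\mathrm{samp}})^{1/2}\bigr),
\]
which is exactly \eqref{eq:main_bound_L1} with $K_{\mathrm{opt}}$ as in \eqref{eq:K_opt}. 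The conservation bound \eqref{eq:main_bound_cons} is Lemma~\ref{lem:cons_bound} verbatim. Both lemmas only need (H2) and the partition property $\sum_i|D_i|=|Q|$, which (H4)--(H5) and the setting of Section~\ref{sec:cv_discrete} supply.

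For the limit statement, apply the estimates along the sequence $\theta_N$ with partitions $\{D_i^{(N)}\}$. The key point is that $K_{\mathrm{opt}}$ is independent of $N$: $M$ and $M_\infty$ are uniform by (H2)--(H3), while $|Q|$, $TV(u_0)$ and $C_{\mathrm{BP}}$ (which depends only on the Lipschitz constant of $F$ on $[-M_\infty,M_\infty]$) are fixed. The right-hand sides of \eqref{eq:main_bound_L1} and \eqref{eq:main_bound_cons} therefore tend to zero, giving $L^1$ convergence at time $T$ and $E_C(U_{\theta_N})\to0$. Since $T$ is arbitrary in Theorem~\ref{thm:BP}, the same holds at intermediate times.

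The main obstacle is not the algebra but the precise hypotheses of Theorem~\ref{thm:BP}. One must check that the residual $E_R$ as defined in \eqref{eq:trunc_error}, with test functions compactly supported in $Q$ and normalized Lipschitz bound, is the quantity the Bouchut--Perthame estimate actually controls. One must also check that the a.e.\ initial trace from the ansatz is enough to kill the initial term. I would handle this by taking \eqref{eq:BP_estimate} as stated in the paper and noting that the uniformity of $C_{\mathrm{BP}}$ relies on the bound $M_\infty$ from (H2).
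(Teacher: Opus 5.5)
Your proposal is correct and follows the paper's proof step for step. You apply Theorem~\ref{thm:BP} with $v=U_\theta$, remove the initial term via the ansatz, bound the total-variation factor by $M$ from (H3), insert Lemma~\ref{lem:entropy_bound}, use $\sqrt{a+b}\le\sqrt a+\sqrt b$, factor out $\max\{1,|Q|^{1/4}\}$, and read the conservation bound off Lemma~\ref{lem:cons_bound}. Your aside on $|\kappa|>M_\infty$ is not needed, since the paper simply takes \eqref{eq:BP_estimate} with $E_R$ defined as a supremum over $\kappa\in K$.
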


\begin{proof}
Apply Theorem~\ref{thm:BP} with $v = U_\theta$. The temporal ansatz gives $\| U_\theta(\cdot, 0) - u_0 \|_{L^1(\Omega)} = 0$ since the identity holds a.e., and assumption (H3) supplies the uniform total variation bound needed by Theorem~\ref{thm:BP}. Hence
\[
\| U_\theta(\cdot, T) - u(\cdot, T) \|_{L^1(\Omega)}
\leq C_{\mathrm{BP}}\, \sqrt{TV(u_0)\, M}\, E_R(U_\theta)^{1/2}.
\]
Lemma~\ref{lem:entropy_bound} gives
\[
E_R(U_\theta)
\leq \sqrt{|Q|}\, \sqrt{\mathcal{L}_{\mathrm{ent}}(\theta)}
+ R_{\mathrm{samp}}^{\mathrm{ent}}(\{D_i\}).
\]
The subadditive inequality $(a + b)^{1/2} \leq a^{1/2} + b^{1/2}$ for $a, b \geq 0$ yields
\[
E_R(U_\theta)^{1/2}
\leq |Q|^{1/4}\, \mathcal{L}_{\mathrm{ent}}(\theta)^{1/4}
+ \bigl( R_{\mathrm{samp}}^{\mathrm{ent}}(\{D_i\}) \bigr)^{1/2}.
\]
Substituting and using the definition~\eqref{eq:K_opt} of $K_{\mathrm{opt}}$ gives~\eqref{eq:main_bound_L1}. The conservation estimate~\eqref{eq:main_bound_cons} follows from Lemma~\ref{lem:cons_bound}. The convergence claim is immediate from~\eqref{eq:main_bound_L1}--\eqref{eq:main_bound_cons}.
\end{proof}

\subsection{Discussion and structural remarks}
\label{sec:cv_remarks}

\begin{remark}[On the partition vs.\ multi-scale stochastic sampling]
\label{rem:partition_vs_sampling}
Theorem~\ref{thm:convergence} is stated for a deterministic partition $\{D_i\}$ of $Q$ with $\sum_i |D_i| = |Q|$. In the practical implementation of Section~\ref{sec:cv_formulation}, space--time control volumes are sampled dynamically at multiple scales and may overlap, so the collection at a given training iteration is not a strict partition. The bounds of Lemmas~\ref{lem:entropy_bound}--\ref{lem:cons_bound} apply, at each iteration, to the underlying idealized partition obtained by taking expectations over the sampling distribution; the multi-scale sampling is then a Monte--Carlo surrogate of the deterministic partition-based residuals. A deterministic inequality for the stochastic multi-scale sampler would need additional probabilistic concentration arguments and is left for future work.
\end{remark}

\begin{remark}[On the entropy loss and stochastic Kru\v{z}kov sampling] The definition~\eqref{eq:L_ent_disc} of $\mathcal{L}_{\mathrm{ent}}$ involves a supremum over $\kappa \in K$. In the implementation of Section~\ref{sec:ent_loss}, this supremum is estimated by stochastic sampling of $\kappa \in K$ at each training iteration. The stochastic Kru\v{z}kov sampling strategy gives a Monte--Carlo estimate of the continuous supremum over the full entropy family, which is needed for non-convex fluxes such as the Buckley--Leverett model. At finite sample size it is therefore an estimator of the family constraint, not a pointwise guarantee that all entropy parameters have been enforced.
\end{remark}

\begin{remark}[On the assumption $U_\theta(\cdot, 0) = u_0(\cdot)$ for discontinuous initial condition] The Riemann-type initial conditions used in Section~\ref{sec:results} have $u_0$ bounded with finite total variation but discontinuous. The temporal ansatz $U_\theta(x, t) = (1 - \tau(t))\, u_0(x) + \tau(t)\, \mathcal{N}_\theta(x, t)$ gives $U_\theta(\cdot, 0) = u_0(\cdot)$ almost everywhere, which is enough for the $L^1$ estimate~\eqref{eq:main_bound_L1} and matches the regularity $u_0 \in L^\infty(\Omega) \cap BV(\Omega)$ required by Theorem~\ref{thm:BP}.
\end{remark}

\begin{remark}[On the total-variation hypothesis (H3)]
\label{rem:tvd_soft}
The uniform total-variation bound (H3) is required by Theorem~\ref{thm:BP} and is a theoretical hypothesis. WE-PINNs does not enforce it as a hard constraint: $\mathcal{L}_{\mathrm{TVD}}$ in Section~\ref{sec:tvd} is a soft regularization that promotes bounded total variation during training. Empirically, $TV(U_\theta(\cdot, t))$ stays uniformly bounded throughout the simulations of Section~\ref{sec:results}, which gives an empirical counterpart to (H3). Enforcing (H3) architecturally is left for future work.
\end{remark}

\begin{remark}[On potential mesh-dependent rates for the sampling residuals]
\label{rem:rates}
Under additional regularity assumptions on $U_\theta$, the flux $F$, and the admissible test functions, and provided the partitions are geometrically regular, one may attempt to prove rates of the form
\[
R_{\mathrm{samp}}^{\mathrm{ent}}(\{D_i\}) = \mathcal{O}(h_{\max}),
\qquad
R_{\mathrm{samp}}^{\mathrm{cons}}(\{D_i\}) = \mathcal{O}(h_{\max}),
\]
where $h_{\max}$ is the maximum cell diameter. Via Theorem~\ref{thm:convergence}, these rates would translate into a contribution $\mathcal{O}(h_{\max}^{1/2})$ to the $L^1$ error. The present analysis claims no such rate: the sampling residuals stay as explicit sampling-consistency terms, and empirical convergence rates with respect to the discrete loss are reported in Section~\ref{subsec:burger_convergence}.
\end{remark}

\begin{remark}[Empirical status of the loss--error rates]
\label{rem:structured_rate}
Theorem~\ref{thm:convergence} establishes the exponent $1/4$ in $\mathcal{L}_{\mathrm{ent}}$ under the stated assumptions. The finite-data slopes reported in Section~\ref{subsec:burger_convergence} are empirical diagnostics: on the last decade of Table~\ref{tab:convergence_slope}, the observed slopes are approximately $0.30$ on the Cartesian grid and $0.35$ on the random grid. These values confirm a positive loss--error trend but do not provide a clean numerical separation between a structured-grid and a random-grid asymptotic rate. We therefore do not claim an empirical $\mathcal{O}(\mathcal{L}^{1/2})$ rate from these data.
\end{remark}

\begin{remark}[Scope of the convergence result]
Theorem~\ref{thm:convergence} gives a conditional $L^1$ convergence estimate for scalar conservation laws, matching the scope of the Bouchut--Perthame error theory. Extension to systems would require a corresponding stability framework, which is not available with the same generality. The numerical experiments on the Euler system in Section~\ref{sec:euler} are reported as robustness studies rather than theoretically guaranteed extensions of Theorem~\ref{thm:convergence}.
\end{remark}

\section{Numerical Experiments}\label{sec:results}

This section presents a comprehensive numerical assessment of the WE-PINNs framework. The evaluation pursues three objectives: to validate the convergence estimate established in Section~\ref{sec:convergence}, to demonstrate the ability of the method to capture entropy solutions in the presence of shocks, rarefactions, and contact discontinuities, and to confirm that the weak space--time control-volume formulation extends naturally from scalar equations to systems of conservation laws.

The validation proceeds from prototype scalar problems to increasingly demanding configurations. The inviscid Burgers equation serves as the canonical scalar benchmark, admitting closed-form entropy solutions for a variety of Riemann configurations and isolating the core difficulties of shock formation and entropy admissibility. The Buckley--Leverett equation introduces the additional difficulty of a non-convex flux, requiring enforcement of the full Kru\v{z}kov entropy family to select the Oleinik-admissible solution. The framework is then evaluated on hyperbolic systems: the compressible Euler equations, which admit shock waves, rarefaction fans, and contact discontinuities.

Unless stated otherwise, all experiments share the following configuration. The neural network approximation $U_\theta$ is a fully connected feedforward network with $\tanh$ activations. For WE-PINNs, the initial condition is satisfied exactly through the temporal ansatz introduced in Section~\ref{sec:wepinns}; for the CI-PINNs, cvPINNs, and standard PINNs baseline methods, the initial condition is treated through the corresponding loss formulation. Consequently, whenever a table averages errors over a set of reporting times that includes $t=0$, any nonzero initial-time baseline error is included in the reported average. For WE-PINNs, the exact temporal ansatz gives zero error at $t=0$, so averaged WE-PINNs values that include $t=0$ must be consistent with the corresponding later-time errors. Space--time control volumes are sampled dynamically at multiple scales according to the multi-scale strategy of Section~\ref{sec:wepinns}, and boundary flux integrals are evaluated by Gauss--Legendre quadrature on each face of the control volume. Specific choices of network depth, width, quadrature order, and spatial domain are reported individually for each test case.

Accuracy is quantified at the reported times. For a time $t_j$, we use
\begin{equation}
  E_{L^p}(t_j) =
  \frac{\|U_\theta(\cdot,t_j) - U(\cdot,t_j)\|_{L^p(\Omega)}}{\|U(\cdot,t_j)\|_{L^p(\Omega)}},
  \qquad p = 1, 2, \infty,
  \label{eq:rel_error}
\end{equation}
where $U$ denotes the reference entropy solution. When a table reports a single value over several times, it is the arithmetic mean of $E_{L^p}(t_j)$ over the listed reporting times; final-time tables correspond to a single reporting time. In all test cases, the WE-PINNs solution is compared against both a high-resolution reference solution and the CI-PINNs, cvPINNs, and standard PINNs baseline methods, all of which are re-implemented in the same training harness with matched network architecture, optimizer, training budget, spatial domain, and initial/boundary data. Method-specific loss terms, weights, quadrature choices, and collocation settings are documented in~\ref{app:baselines}. The entropy pair associated with each model is provided in~\ref{app:entropies}.

\subsection{Inviscid Burgers Equation}
\label{sec:burgers}

The WE-PINNs framework is first evaluated on the one-dimensional inviscid Burgers equation
\begin{equation}
  \partial_t u \;+\; \partial_x\!\left(\frac{u^2}{2}\right) = 0,
  \qquad (x,t)\in\Omega\times(0,T),
  \label{eq:burgers}
\end{equation}
on the spatial domain $\Omega=[-1,1]$ with final time $T=1$. The Burgers equation is the canonical scalar nonlinear conservation law: even smooth initial conditions develop shock discontinuities in finite time through nonlinear steepening, and the entropy condition is required to select the unique physically admissible weak solution among all distributional solutions. This combination of features makes Eq.~\eqref{eq:burgers} the standard first benchmark for any shock-capturing method. Unless otherwise stated, the three main Burgers benchmarks below use the default configuration chosen consistently with the parametric study reported in Section~\ref{subsec:burger_convergence}: the neural network $u_\theta$ comprises $8$ hidden layers of $32$ neurons each with $\tanh$ activations, boundary flux integrals are evaluated using $8$-point Gauss--Legendre quadrature on each control-volume face, and $2500$ space--time control volumes are sampled at multiple scales according to the strategy of Section~\ref{sec:wepinns} (see Fig.~\ref{fig:CV_1}). Although the sweep in Section~\ref{subsec:burger_convergence} uses odd quadrature orders $Q\in\{3,5,7,9,11\}$, the default $Q=8$ lies in the stable $Q\geq7$ range identified by that study. The error study itself deliberately varies the control-volume distribution, quadrature order, depth, and width, and is therefore not part of the fixed-configuration comparison.

\subsubsection{Riemann Shock Problem}
\label{subsec:burger_shock}

The first test case is the Riemann initial condition
\begin{equation}
  u(x,0) =
  \begin{cases}
    1, & x \le -0.25, \\
    0, & x > -0.25,
  \end{cases}
\end{equation}
whose entropy solution is a single right-moving shock propagating at the Rankine--Hugoniot speed $s = \tfrac{1}{2}$. This test directly assesses whether the integral conservation identity~\eqref{eq:weak_conservation}, which implicitly encodes the Rankine--Hugoniot jump conditions, enforces the correct shock speed and maintains a sharp, well-localized discontinuity throughout the simulation.

The solution profiles at $t=0$, $t=0.5$, and $t=1.0$ are shown in Fig.~\ref{fig:Riemann_Problem}. The shock predicted by WE-PINNs is captured at the analytical location at every reported time, with no spurious oscillations or smearing on either side of the discontinuity, and the constant plateau states are fully preserved. The standard PINNs, by contrast, diffuse the shock over a wide region and underestimate the wave speed, a behavior consistent with the divergence of the strong-form residual near a discontinuity reported in~\cite{de2024wpinns}.

\begin{figure}[H]
  \centering
  \includegraphics[width=\textwidth]{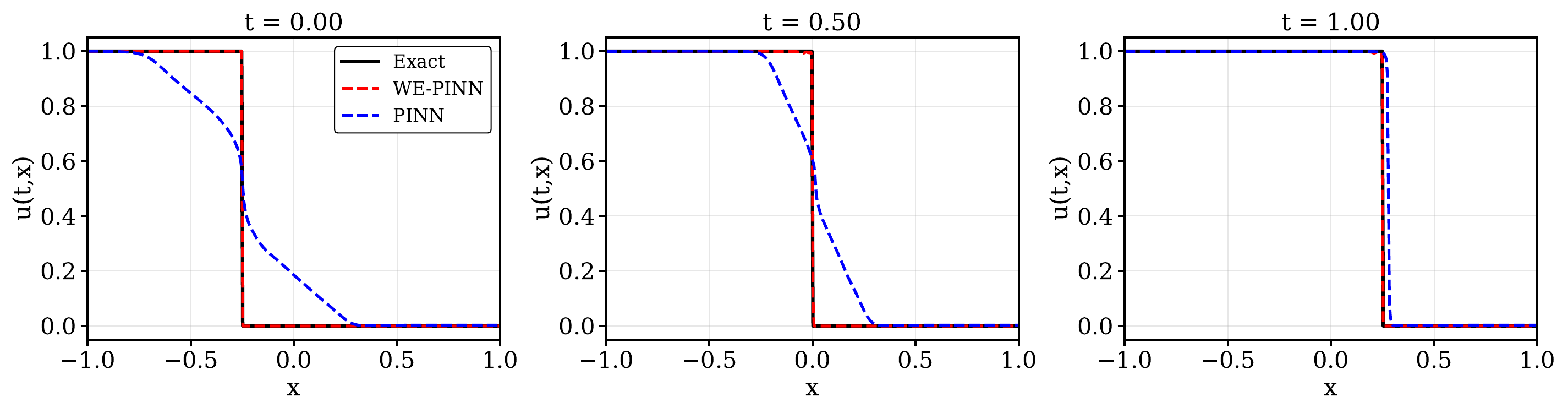}
  \caption{Solution profiles for the Burgers Riemann shock problem at $t=0$, $t=0.5$, and $t=1.0$. Comparison between the exact entropy solution, WE-PINNs, and the standard PINNs.}
  \label{fig:Riemann_Problem}
\end{figure}

The training history is reported in Fig.~\ref{fig:burger_shock_loss}. The total WE-PINNs loss decreases, and all individual components $\mathcal{L}_{\mathrm{cons}}$, $\mathcal{L}_{\mathrm{ent}}$, and $\mathcal{L}_{\mathrm{TVD}}$ settle to low values. The strongest decrease is observed in $\mathcal{L}_{\mathrm{TVD}}$, indicating that the weak conservation identity, the entropy inequality, and the TVD regularization are enforced together during training. The PDE residual of the standard PINNs, by contrast, stagnates at a significantly higher plateau. The space--time evolution in Fig.~\ref{fig:Riemann_Problem_space_time} shows that the WE-PINNs solution faithfully reproduces the diagonal shock trajectory, with the relative $L^2$ error tightly concentrated along the shock line, whereas the standard PINNs error spreads across a broad region of the space--time domain.

\begin{figure}[H]
  \centering
  \includegraphics[width=\textwidth]{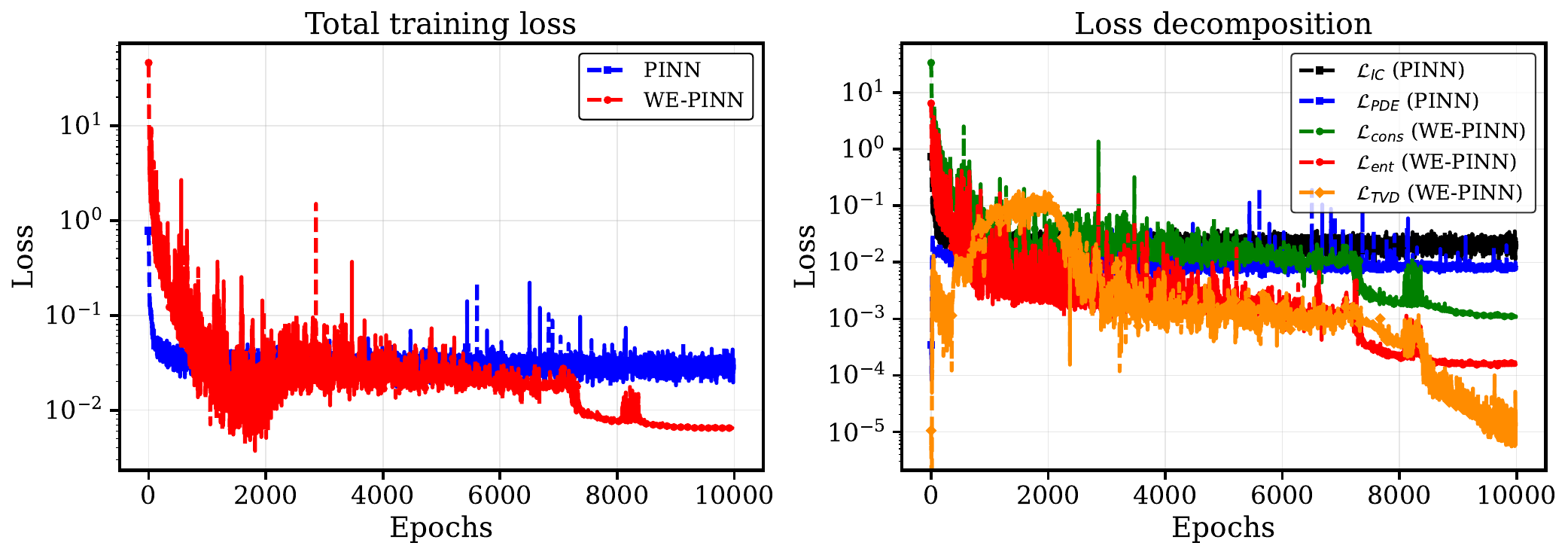}
  \caption{Training loss history for the Burgers Riemann shock problem. \emph{Left}: total loss for both methods. \emph{Right}: individual components $\mathcal{L}_{\mathrm{cons}}$, $\mathcal{L}_{\mathrm{ent}}$ and $\mathcal{L}_{\mathrm{TVD}}$ (WE-PINNs); $\mathcal{L}_{\mathrm{PDE}}$ and $\mathcal{L}_{\mathrm{IC}}$ (standard PINNs).}
  \label{fig:burger_shock_loss}
\end{figure}

\begin{figure}[htbp]
  \centering
  \includegraphics[width=\textwidth]{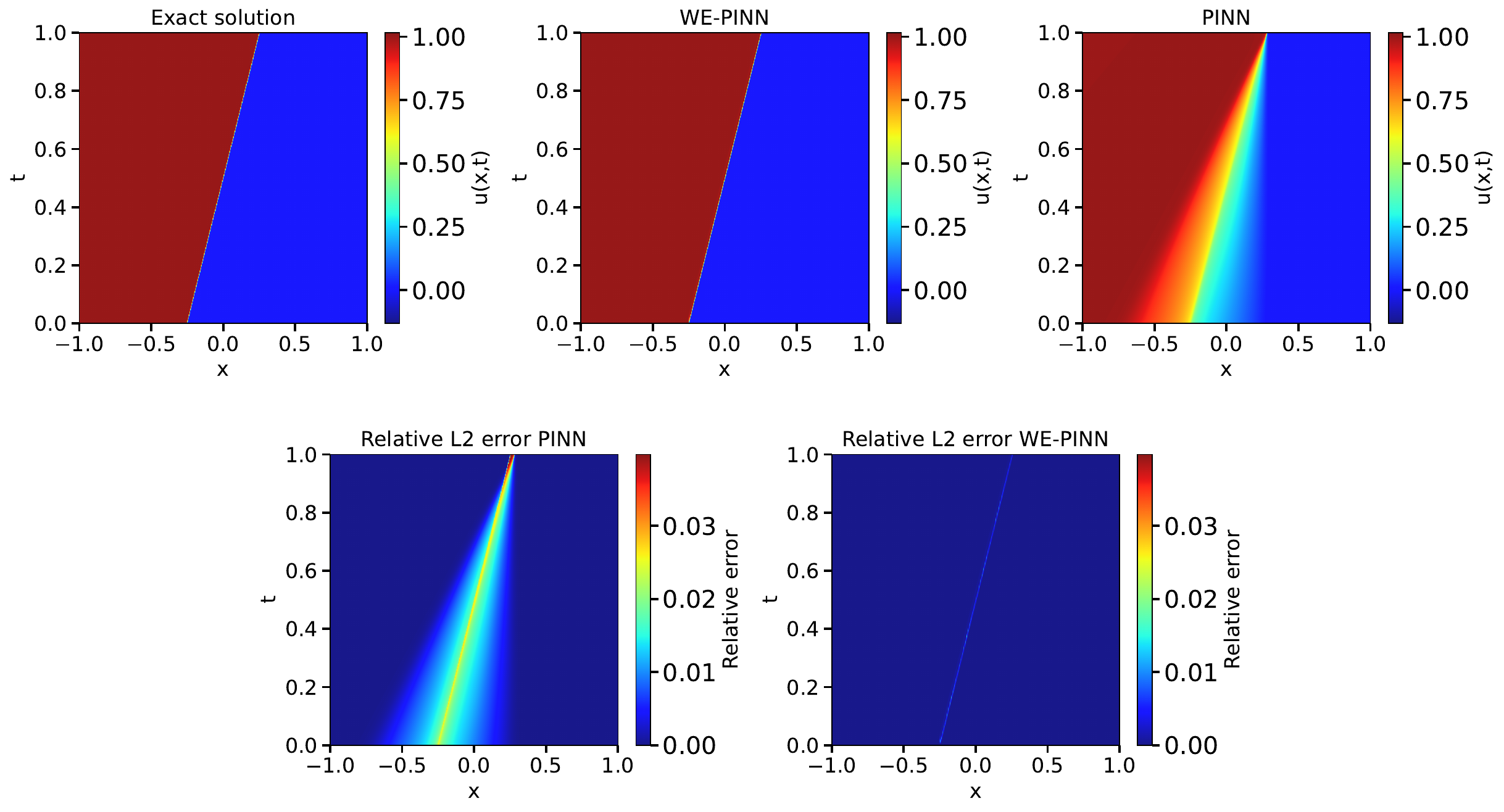}
  \caption{Space--time evolution of the solution for the Burgers Riemann shock problem. \emph{Top row}: exact solution, WE-PINNs solution, and standard PINNs solution. \emph{Bottom row}: pointwise relative $L^2$ error maps for WE-PINNs and the standard PINNs.}
  \label{fig:Riemann_Problem_space_time}
\end{figure}
\begin{table}[h!]
\centering
\caption{Relative $L^1$, $L^2$, and $L^\infty$ errors (in units of $10^{-3}$) averaged over $t=0$, $0.5$, and $1.0$ for the Burgers Riemann shock problem, shown for WE-PINNs, CI-PINNs, cvPINNs, and the standard PINNs. \textbf{Bold} indicates the best result and \underline{underline} the second best in each error group.}
\label{tab:burger_riemann_errors}
\begin{tabular}{lccc}
\toprule
Model &  $E_{L^1}$ &  $E_{L^2}$ & $E_{L^\infty}$   \\
\midrule
WE-PINNs       & \textbf{2.42}    & \textbf{14.09}    & \textbf{197.20} \\
CI-PINNs        & 66.05            & 177.50            & 711.80 \\
cvPINNs         & \underline{7.37} & \underline{36.08} & 523.00 \\
Standard PINNs  & 116.80           & 146.90            & \underline{432.40} \\
\bottomrule
\end{tabular}
\end{table}
The relative errors averaged over $t=0$, $0.5$, and $1.0$ are listed in Table~\ref{tab:burger_riemann_errors} for WE-PINNs and three representative PINNs-based methods: the Coupled Integral PINNs (CI-PINNs)~\cite{CIPINN2024}, the control-volume PINNs (cvPINNs)~\cite{Patel2022}, and the standard PINNs. The table shows a clear advantage for WE-PINNs across all three norms. In $L^1$, the error is $2.42\times10^{-3}$, about three times below cvPINNs, one order of magnitude below CI-PINNs, and nearly $50$ times below the standard PINNs. The same ordering appears in $L^2$ and $L^\infty$, showing that the improvement is not limited to an averaged measure but also extends to pointwise accuracy near the shock.

\subsubsection{Rarefaction--Shock Interaction}
\label{subsec:burger_interaction}

The second test case uses the compactly supported initial condition
\begin{equation}
  u(x,0) =
  \begin{cases}
    1, & -0.5 < x \le 0, \\
    0, & \text{otherwise},
  \end{cases}
\end{equation}
which generates a left-expanding rarefaction fan from $x=-0.5$ and a right-moving shock from $x=0$. The two wave families propagate in the same direction; the rarefaction overtakes the shock at a finite interaction time, the intermediate plateau $u=1$ is consumed, and the post-interaction solution reduces to a single weakening shock. This topology change must be captured without any problem-specific treatment, making the test a rigorous assessment of the ability of the weak space--time formulation to handle simultaneous smooth and discontinuous wave structures within a single training run.

The solution profiles in Fig.~\ref{fig:Rarefaction_Shock} show that WE-PINNs resolves the rarefaction fan smoothly while maintaining a sharp shock at every reported time. The space--time plot in Fig.~\ref{fig:Interaction_space_time} faithfully reproduces the moment of wave interaction and the subsequent single-shock regime. The standard PINNs, by contrast, diffuse both wave families heavily and fail to capture the interaction phenomenon.

\begin{figure}[H]
  \centering
  \includegraphics[width=\textwidth]{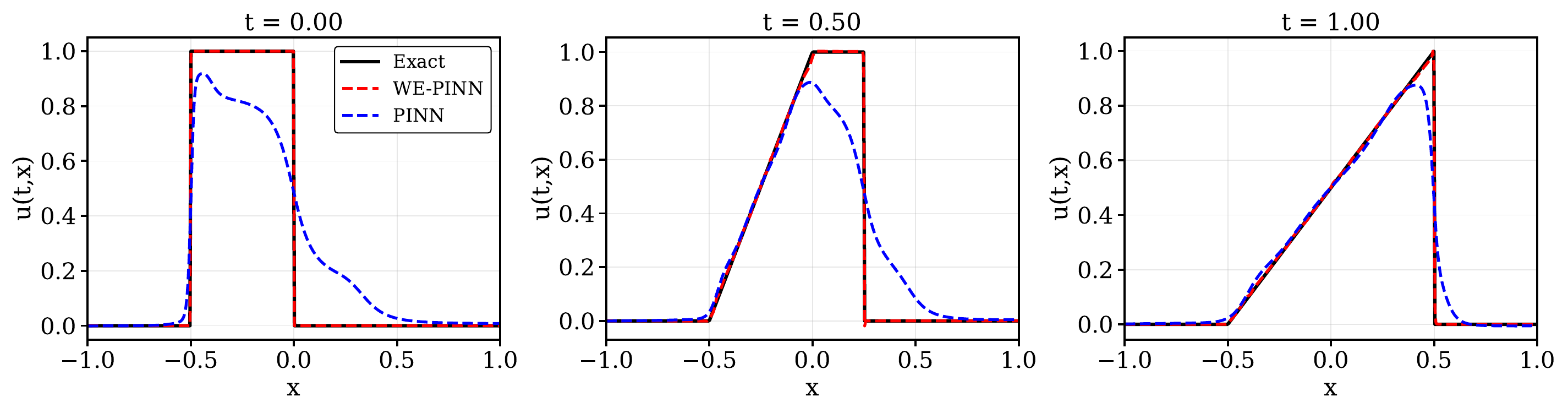}
  \caption{Solution profiles for the Burgers rarefaction--shock interaction problem at $t=0$, $t=0.5$, and $t=1.0$. WE-PINNs resolves the rarefaction fan and the shock simultaneously and captures the wave interaction correctly.}
  \label{fig:Rarefaction_Shock}
\end{figure}

\begin{figure}[H]
  \centering
  \includegraphics[width=\textwidth]{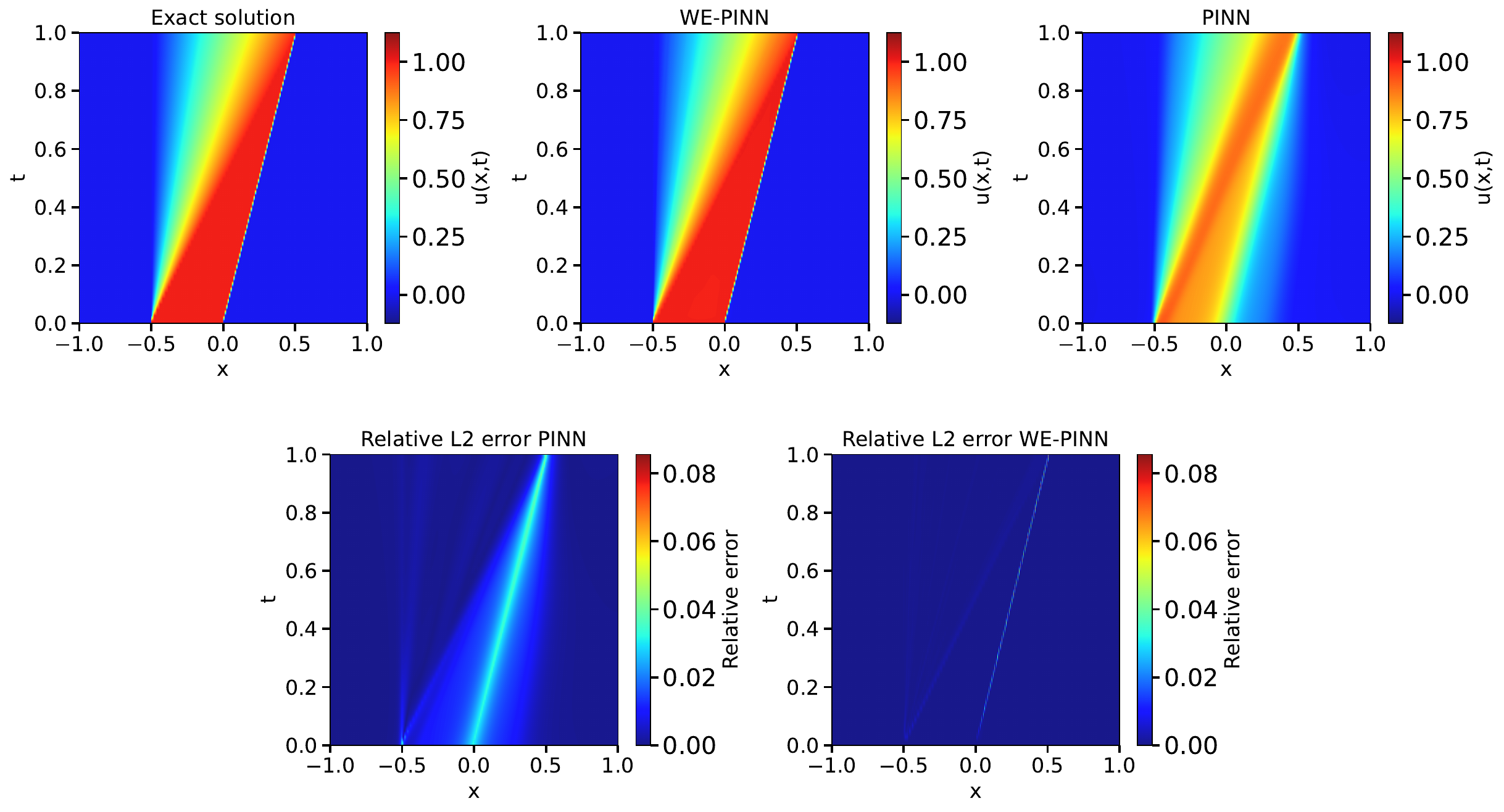}
  \caption{Space--time evolution of the solution for the Burgers rarefaction--shock interaction problem. The convergence of the two wave fronts and the subsequent single-shock regime are both captured accurately by WE-PINNs.}
  \label{fig:Interaction_space_time}
\end{figure}

The relative errors averaged over $t=0$, $0.5$, and $1.0$ are reported in Table~\ref{tab:burger_interaction_errors}. WE-PINNs is again the most accurate method in all three norms. Its $L^1$ error, $8.03\times10^{-3}$, is about five times smaller than cvPINNs and nearly $19$ times smaller than CI-PINNs and the standard PINNs. The gap remains visible in $L^2$ and $L^\infty$, where the competing methods show substantially larger errors near the interacting waves. This supports the visual observation from Figs.~\ref{fig:Rarefaction_Shock}--\ref{fig:Interaction_space_time}: the weak space--time formulation tracks both the rarefaction and the shock without losing the interaction topology.

\begin{table}[htbp]
\centering
\caption{Relative $L^1$, $L^2$, and $L^\infty$ errors (in units of $10^{-3}$) averaged over $t=0$, $0.5$, and $1.0$ for the Burgers rarefaction--shock interaction, shown for WE-PINNs, CI-PINNs, cvPINNs, and the standard PINNs. \textbf{Bold} indicates the best result and \underline{underline} the second best in each error group.}
\label{tab:burger_interaction_errors}
\begin{tabular}{lccc}
\toprule
Model &  $E_{L^1}$ &  $E_{L^2}$ & $E_{L^\infty}$   \\
\midrule
WE-PINNs       & \textbf{8.03}     & \textbf{32.51}     & \textbf{299.60} \\
CI-PINNs        & 149.90            & 310.00             & \underline{700.80} \\
cvPINNs         & \underline{40.97} & \underline{107.40} & 709.30 \\
Standard PINNs  & 150.30            & 213.90             & 716.10 \\
\bottomrule
\end{tabular}
\end{table}

\subsubsection{Sinusoidal Initial Condition}
\label{subsec:burger_sin}

The final Burgers benchmark uses the smooth periodic initial condition $u(x,0) = -\sin(\pi x)$ with periodic boundary conditions on $\Omega=[-1,1]$. This configuration validates a qualitatively different regime: the solution is initially smooth, undergoes nonlinear steepening, and forms a shock at $t = 1/\pi$. The numerical solution must therefore be tracked through both the pre-shock smooth phase and the post-shock regime within a single training run, with the shock-formation time itself learned implicitly through the weak formulation.

The solution profiles at $t=0$, $t=0.5$, and $t=1.0$ are shown in Fig.~\ref{fig:sin_burger}. WE-PINNs faithfully reproduces the steepening of the profile up to shock formation and captures the post-shock state at $t=1.0$ with a sharp discontinuity at $x=0$.
The space--time error maps in Fig.~\ref{fig:sin_burger_space_time} clearly illustrate the contrast between the two methods: the WE-PINNs error remains small and uniformly distributed across the entire space--time domain, whereas the standard PINNs error is strongly concentrated in a band along the shock trajectory.

\begin{figure}[H]
  \centering
  \includegraphics[width=\textwidth]{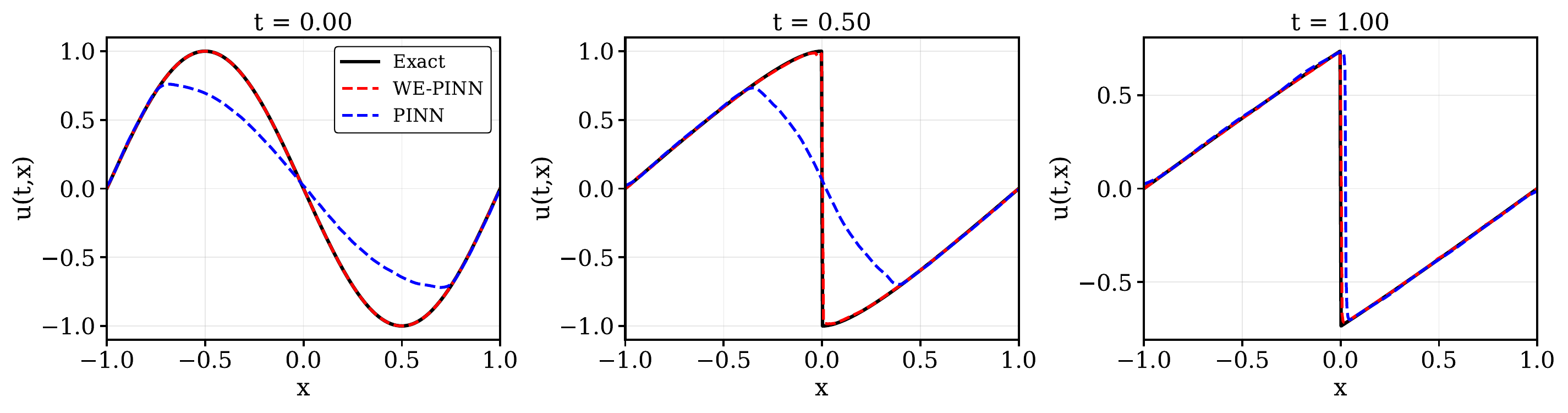}
  \caption{Solution profiles for the Burgers equation with sinusoidal initial condition $u(x,0)=-\sin(\pi x)$ at $t=0$, $t=0.5$, and $t=1.0$. WE-PINNs captures the full evolution from the smooth initial profile through shock formation to the final post-shock state.}
  \label{fig:sin_burger}
\end{figure}

\begin{figure}[htbp]
  \centering
  \includegraphics[width=\textwidth]{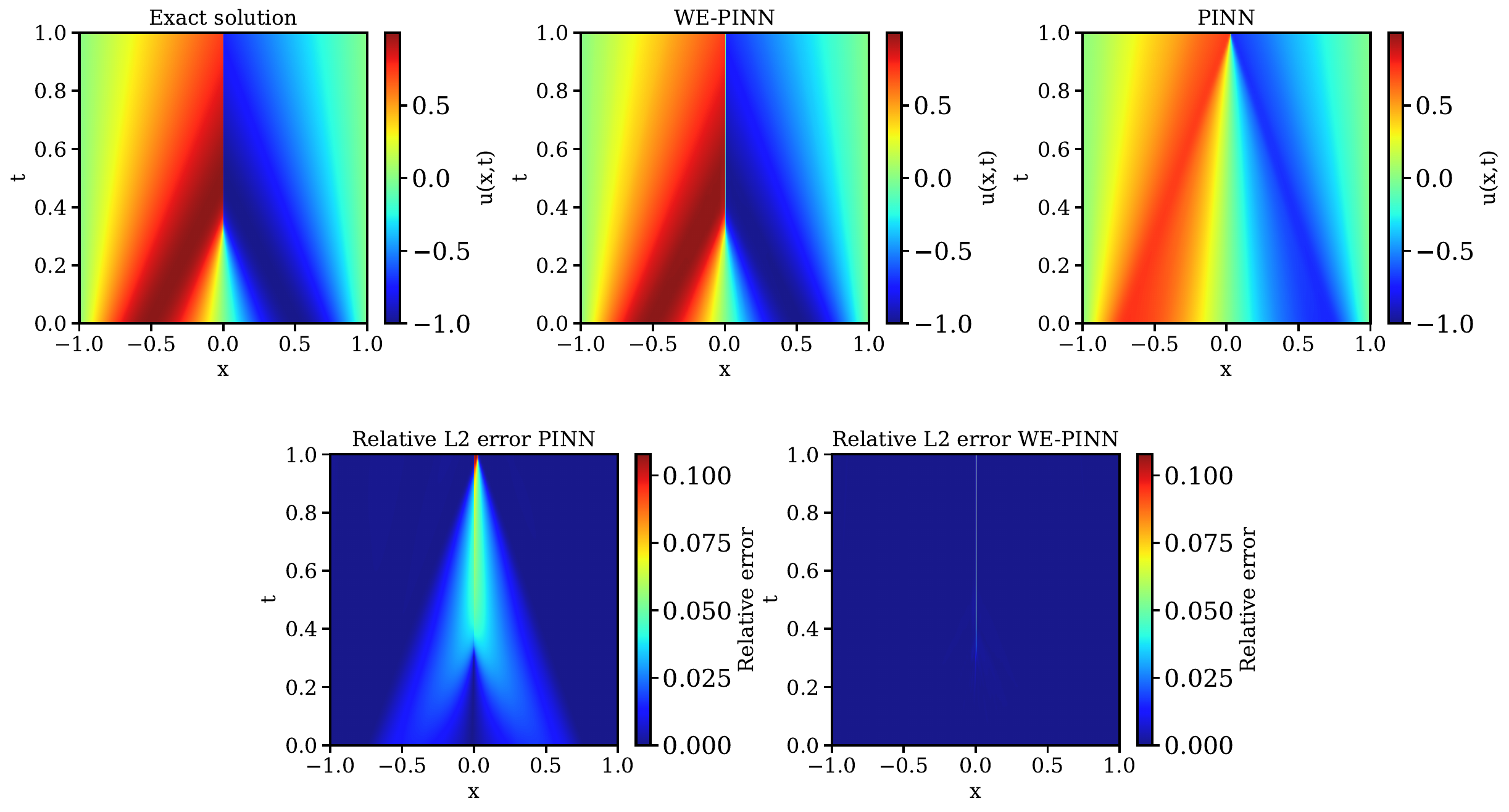}
  \caption{Space--time solution fields (\emph{top row}) and pointwise relative $L^2$ error maps (\emph{bottom row}) for the Burgers equation with the sinusoidal initial condition on $[-1,1]\times[0,1]$. The WE-PINNs error is small and uniformly distributed, whereas the standard PINNs error is concentrated along the shock trajectory.}
  \label{fig:sin_burger_space_time}
\end{figure}
The relative errors averaged over $t=0$, $0.5$, and $1.0$ are listed in Table~\ref{tab:burger_sin_errors}. On this smooth-to-shock benchmark, WE-PINNs gives the smallest averaged errors in all three norms. The $L^1$ error is $3.14\times10^{-3}$, roughly six times below cvPINNs and nearly two orders of magnitude below CI-PINNs and the standard PINNs. The $L^2$ and $L^\infty$ errors follow the same ranking, with the largest separation appearing after shock formation. These results indicate that the weak space--time conservation identity~\eqref{eq:weak_conservation} captures both the onset of the shock and its subsequent propagation.

\begin{table}[htbp]
\centering
\caption{Relative $L^1$, $L^2$, and $L^\infty$ errors (in units of $10^{-3}$) averaged over $t=0$, $0.5$, and $1.0$ for the Burgers Sinusoidal Initial Condition, shown for WE-PINNs, CI-PINNs, cvPINNs, and the standard PINNs. \textbf{Bold} indicates the best result and \underline{underline} the second best in each error group.}
\label{tab:burger_sin_errors}
\begin{tabular}{lccc}
\toprule
Model &  $E_{L^1}$ &  $E_{L^2}$ & $E_{L^\infty}$   \\
\midrule
WE-PINNs       & \textbf{3.14}     & \textbf{51.06}     & \textbf{277.50} \\
CI-PINNs        & 220.20            & 447.40             & \underline{1069.00} \\
cvPINNs         & \underline{20.48} & \underline{120.90} & 1307.00 \\
Standard PINNs  & 183.50            & 327.40             & 1121.00 \\
\bottomrule
\end{tabular}
\end{table}

\subsubsection{Error Study}
\label{subsec:burger_convergence}

To empirically validate the error estimate of Theorem~\ref{thm:convergence} and document the choice of the default Burgers configuration, we present two complementary numerical studies on the Burgers equation with sinusoidal initial condition, which contains a smooth phase and a shock phase. The first examines the relationship between the $L^1$ error and the total loss $\mathcal{L}$ for a fixed network architecture and quadrature rule. The second performs a systematic parametric sweep over the quadrature order $Q$, the network depth $L$, and the number of neurons per layer $N$, in order to assess the robustness of the observed loss error behavior across a range of architectural configurations.

\medskip
\noindent\textit{Study 1: Convergence verification.}
Two control-volume distributions are considered: a regular Cartesian mesh and a random mesh, both comprising the same number of control volumes (Fig.~\ref{fig:mesh_burger_study}). The corresponding $L^1$ errors and total losses are reported in Table~\ref{tab:convergence_slope} and illustrated in Fig.~\ref{fig:error_burger_study}. The data show a clear positive relationship between the loss and the $L^1$ error, but the finite-data slopes do not cleanly separate the two grid types. On the last loss decade, the slopes are approximately $0.30$ for the Cartesian grid and $0.35$ for the random grid. These values are compatible with the conditional loss--error control of Theorem~\ref{thm:convergence}, but they do not justify claiming a clean empirical $\mathcal{O}(\mathcal{L}^{1/2})$ structured-grid rate from Table~\ref{tab:convergence_slope}.

\begin{table}[H]
\centering
\caption{$L^1$ error and total loss $\mathcal{L}$ at selected training stages for the Cartesian and random meshes applied to the Burgers equation, evaluated at $t = T/2$ and $t = T$.}
\label{tab:convergence_slope}
\renewcommand{\arraystretch}{1.}
\begin{tabular}{llcccc}
\toprule
\multirow{2}{*}{\textbf{Grid type}}
& \multirow{2}{*}{\textbf{Total loss}}
& \multirow{2}{*}{\textbf{Epoch}}
& \multicolumn{2}{c}{$E_{L^1}$} \\
\cmidrule(lr){4-5}
& & & $\boldsymbol{T/2}$ & $\boldsymbol{T}$ \\
\midrule
\multirow{4}{*}{\rotatebox{90}{Cartesian}}
& $10^{-1}$ & 2269   & $9.491\times 10^{-2}$ & $5.829\times 10^{-2}$ \\
& $10^{-2}$ & 4326   & $3.708\times 10^{-2}$ & $5.353\times 10^{-2}$ \\
& $10^{-3}$ & 62746  & $1.566\times 10^{-2}$ & $2.070\times 10^{-2}$ \\
& $10^{-4}$ & 123815 & $9.784\times 10^{-3}$ & $1.039\times 10^{-2}$ \\
\midrule
\multirow{4}{*}{\rotatebox{90}{Random}}
& $10^{-1}$ & 5476   & $7.268\times 10^{-2}$ & $1.118\times 10^{-1}$ \\
& $10^{-2}$ & 5609   & $2.894\times 10^{-2}$ & $3.982\times 10^{-2}$ \\
& $10^{-3}$ & 20672  & $1.942\times 10^{-2}$ & $2.530\times 10^{-2}$ \\
& $10^{-4}$ & 65540  & $9.725\times 10^{-3}$ & $1.119\times 10^{-2}$ \\
\bottomrule
\end{tabular}
\end{table}

\begin{figure}[H]
    \centering
    \includegraphics[width=\textwidth]{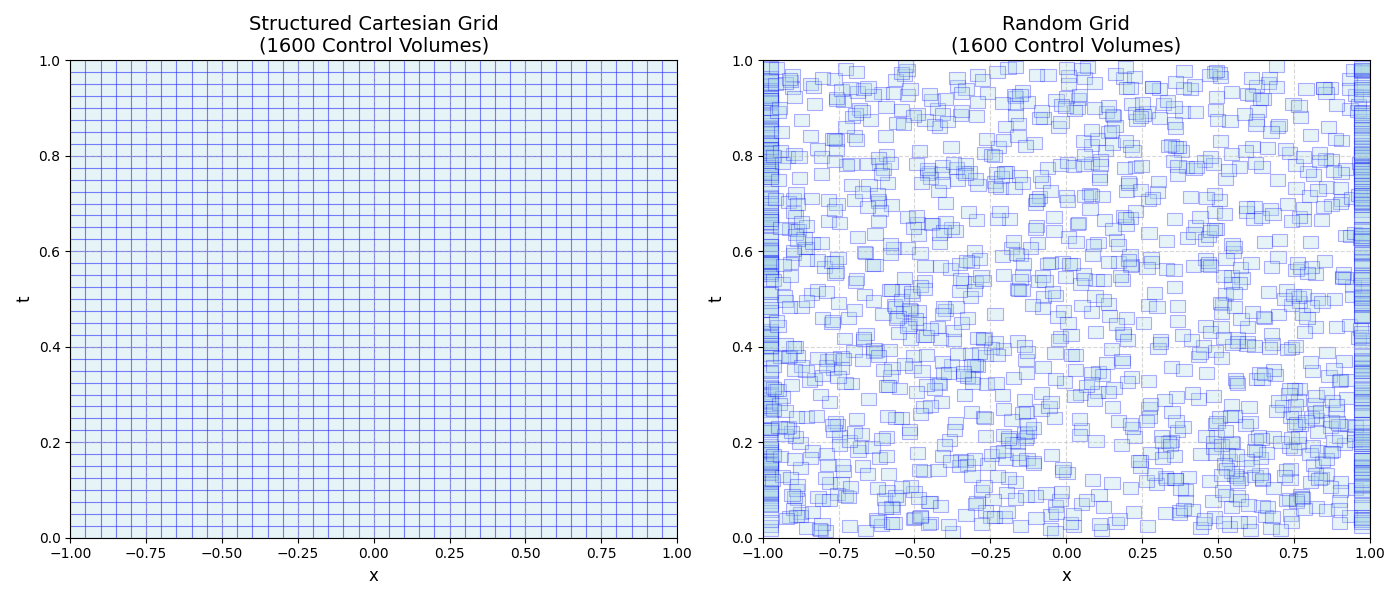}
    \caption{Control-volume distributions for the regular Cartesian mesh (left) and the random mesh (right), both comprising the same number of control volumes.}
    \label{fig:mesh_burger_study}
\end{figure}

\begin{figure}[H]
    \centering
    \begin{subfigure}{0.48\textwidth}
        \centering
        \includegraphics[width=\linewidth]{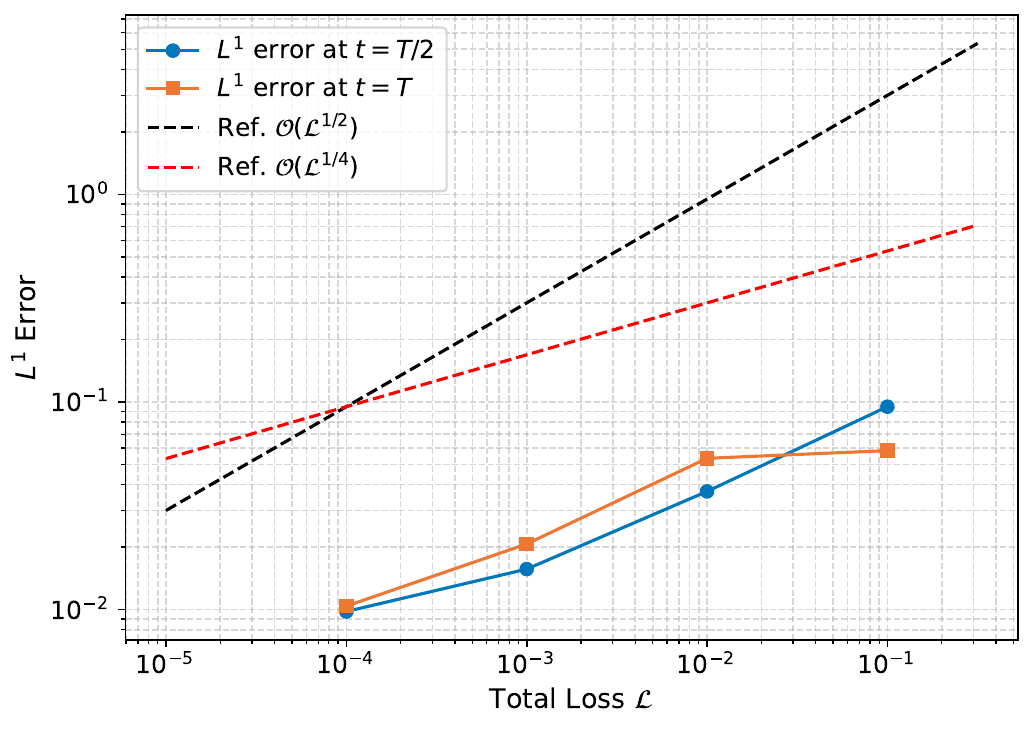}
        \caption{$L^1$ error on the Cartesian mesh, with curves at $t = T/2$ and $t = T$.}
        \label{fig:error_burger_half}
    \end{subfigure}
    \hfill
    \begin{subfigure}{0.48\textwidth}
        \centering
        \includegraphics[width=\linewidth]{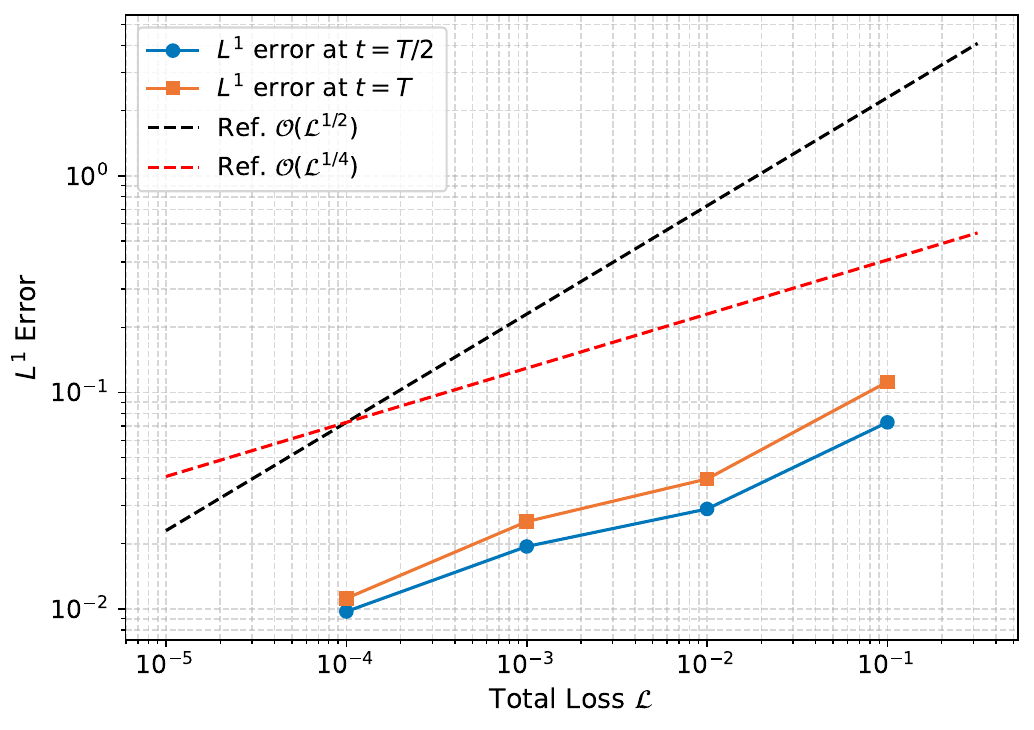}
        \caption{$L^1$ error on the random mesh, with curves at $t = T/2$ and $t = T$.}
        \label{fig:error_burger_full}
    \end{subfigure}
    \caption{$L^1$ error versus total loss $\mathcal{L}$ for the Burgers equation on the regular Cartesian mesh (left) and the random mesh (right), at $t = T/2$ (blue) and $t = T$ (orange). Dashed lines show reference slopes $\mathcal{O}(\mathcal{L}^{1/2})$ (black) and $\mathcal{O}(\mathcal{L}^{1/4})$ (red).}
    \label{fig:error_burger_study}
\end{figure}

\medskip
\noindent\textit{Study 2: Parametric analysis.}
Figure~\ref{fig:para_error_structured} reports the $L^1$ error for the Cartesian grid as a function of $\mathcal{L}$ for all combinations of $Q \in \{3, 5, 7, 9, 11\}$, $L \in \{4, 8, 16\}$, and $N \in \{16, 32, 64\}$, evaluated at $t = T/2$ and $t = T$ where $T=1$. The complete numerical data underlying this sweep are reported in~\ref{app:parametric}. Several observations emerge from these results. First, the empirical loss error trend is preserved across the vast majority of configurations and at both evaluation times, indicating that the discrete loss remains a useful surrogate for the $L^1$ error across the tested network architectures. Errors at $t = T/2$ are systematically smaller than at $t = T$, consistent with the fact that the solution has developed fewer sharp features at intermediate times. Second, increasing the quadrature order $Q$ consistently reduces both the total loss and the $L^1$ error, particularly for moderate network architectures; configurations with $Q \geq 7$ achieve markedly lower errors across all tested depths and widths. Third, the effects of network depth $L$ and width $N$ are non-monotone: moderate architectures such as $L = 8$ with $N = 32$ yield competitive accuracy across all quadrature orders, whereas excessively large networks do not provide systematic improvement. In particular, the configurations $(Q = 9,\, L = 16,\, N = 64)$ and $(Q = 11,\, L = 16,\, N = 64)$ exhibit optimization failures, with the $L^1$ error diverging well above the observed reference trend, suggesting that over-parameterized networks combined with high-order quadrature introduce ill-conditioning in the loss landscape. Taken together, these results establish that a moderate architecture comprising $L = 8$ hidden layers and $N = 32$ neurons per layer, paired with a quadrature order $Q \geq 7$, provides the best trade-off between accuracy, consistency with the observed loss error trend, and optimization stability. Results for the random mesh are shown in Fig.~\ref{fig:para_error_random}.

\begin{figure}[htbp]
    \centering
    \includegraphics[width=1\textwidth]{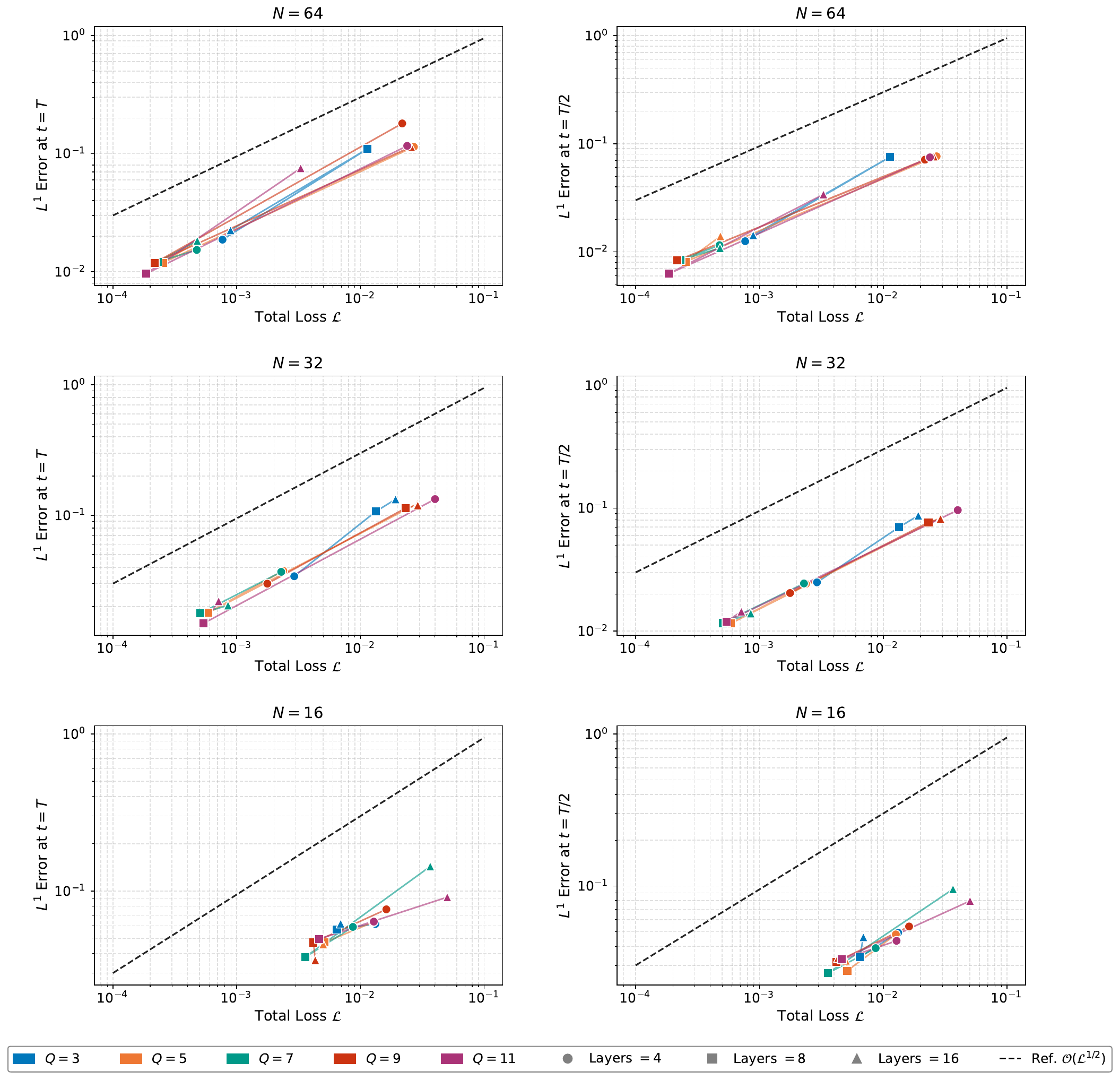}
    \caption{$L^1$ error as a function of the total loss $\mathcal{L}$ for the Burgers equation on the structured Cartesian grid, for varying quadrature order $Q$ (colors), network depth $L$ (markers), and number of neurons per layer $N$ (panels), evaluated at $t = T$ (left column) and $t = T/2$ (right column). The dashed line indicates the reference slope $\mathcal{O}(\mathcal{L}^{1/2})$.}
    \label{fig:para_error_structured}
\end{figure}

\begin{figure}[htbp]
    \centering
    \includegraphics[width=1\textwidth]{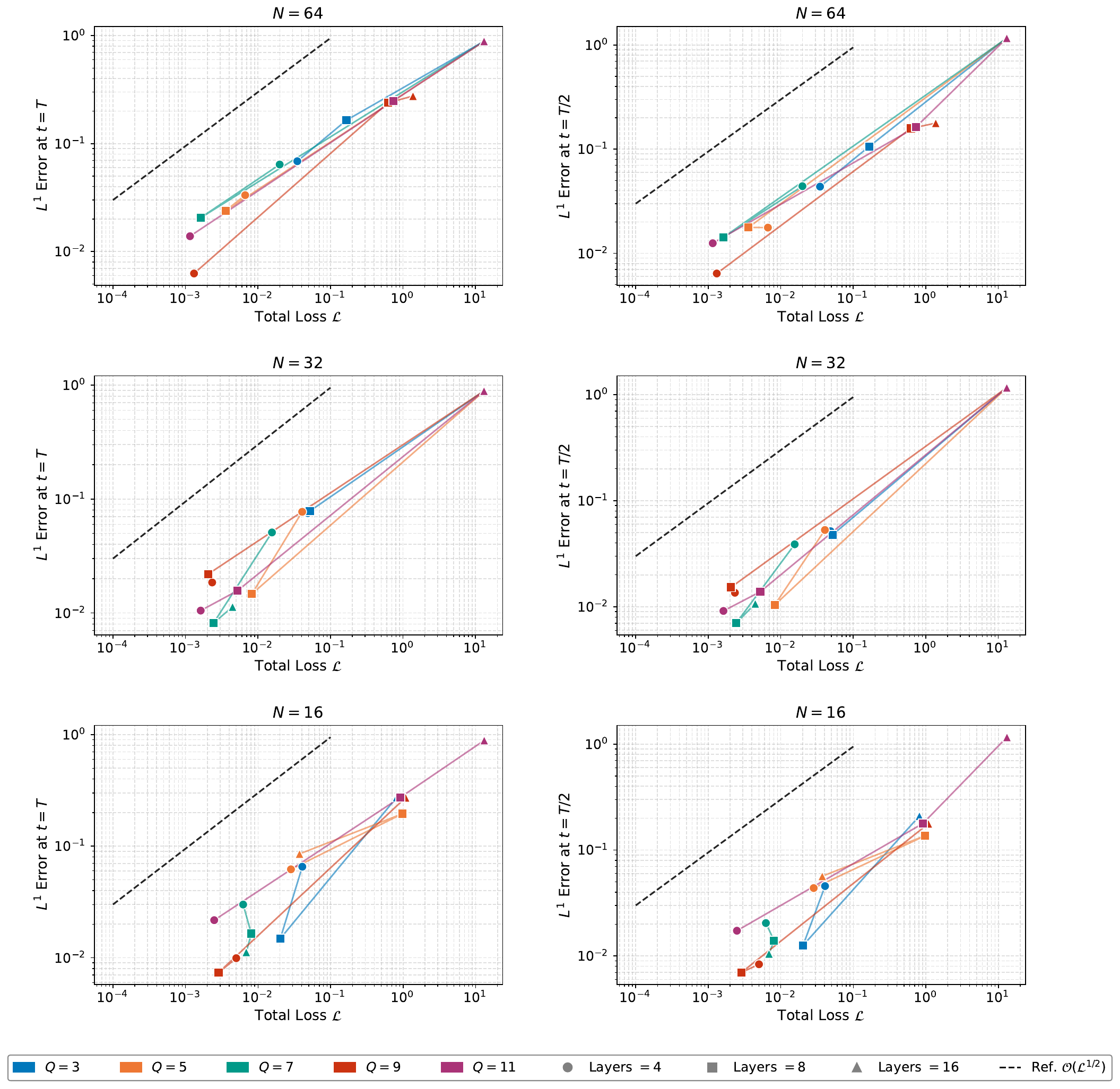}
    \caption{$L^1$ error as a function of the total loss $\mathcal{L}$ for the Burgers equation on the random (unstructured) grid, for varying quadrature order $Q$ (colors), network depth $L$ (markers), and number of neurons per layer $N$ (panels), evaluated at $t = T$ (left column) and $t = T/2$ (right column). The dashed line indicates the theoretical reference slope $\mathcal{O}(\mathcal{L}^{1/4})$.}
    \label{fig:para_error_random}
\end{figure}

Across the three Burgers benchmarks, Riemann shock, rarefaction--shock interaction, and sinusoidal smooth-to-shock evolution, WE-PINNs gives the smallest averaged errors in every reported norm. The gains are most pronounced in shock-dominated and mixed wave regimes, where strong form residual minimization is most limited. On the sinusoidal benchmark, cvPINNs is the closest competitor in $L^1$ and $L^2$, but WE-PINNs still remains the most accurate method overall. The empirical convergence studies in Section~\ref{subsec:burger_convergence} show a consistent decrease of the $L^1$ error as the total loss decreases; the last-decade slopes in Table~\ref{tab:convergence_slope} are approximately $0.30$ on the structured grid and $0.35$ on the random grid, so the data are presented as loss--error diagnostics rather than as evidence for a clean structured-grid asymptotic rate. Together, these results establish WE-PINNs as a robust mesh-free solver for scalar conservation laws and provide the foundation for the extensions to non-convex fluxes and to systems of conservation laws presented in the following sections.

\subsection{Buckley--Leverett Equation}
\label{sec:BL}

The WE-PINNs framework is next assessed on the Buckley--Leverett equation, a classical model for two-phase immiscible fluid flow in porous media arising in petroleum engineering and groundwater hydrology. The one-dimensional conservation law reads
\begin{equation}
  \partial_t u \;+\;
  \partial_x f(u) = 0,
  \qquad
  f(u) = \frac{u^{2}}{u^{2}+a(1-u)^{2}},
  \qquad (x,t)\in\Omega\times(0,T),
  \label{eq:BL}
\end{equation}
where $u\in[0,1]$ is the water saturation and the viscosity ratio is set to $a=0.25$. Unlike the Burgers flux, the Buckley--Leverett flux $f(u)$ is non-convex: it possesses an inflection point in the interval $(0,1)$, and consequently, a single strictly convex entropy pair is mathematically insufficient to select the unique physically admissible weak solution. The relevant selection mechanism is the Oleinik condition, which is equivalent to enforcing the full one-parameter Kru\v{z}kov entropy family~\cite{Kruzkov1970}
\begin{equation}
  \eta_\kappa(u) = |u-\kappa|,
  \qquad
  q_\kappa(u) = \mathrm{sgn}(u-\kappa)\bigl(f(u)-f(\kappa)\bigr),
  \qquad \kappa\in[0,1].
\end{equation}
In the WE-PINNs loss, the entropy inequality~\eqref{eq:entropy_inequality} is approximated for this family by stochastically sampling values of $\kappa\in[0,1]$ at each training epoch. This is the Buckley--Leverett specialization of the theoretical interval $K=[-M_\infty,M_\infty]$: since the saturation satisfies $u\in[0,1]$, the relevant entropy parameters are sampled in the physical range. The stochastic Kru\v{z}kov sampling strategy is therefore a Monte--Carlo approximation of the full family at finite training budget; the control-volume structure, the boundary flux quadrature, and the network architecture remain unchanged. All experiments are performed on the spatial domain $\Omega=[-1,1]$, with final time $T=0.70$ for the compound wave problem and $T=0.2$ for the rarefaction--shock interaction, chosen so that the wave structure remains within the domain throughout the simulation. As for the Burgers equation, the neural network comprises $8$ hidden layers of $32$ neurons each with $\tanh$ activation and 8-point Gauss--Legendre quadrature.

\subsubsection{Compound Wave Problem (Water Injection)}
\label{subsec:BL_shock}

The first test case is the Riemann initial condition associated with a water injection process,
\begin{equation}
  u(x,0) =
  \begin{cases}
    1, & x \le -0.5, \\
    0, & x > -0.5,
  \end{cases}
\end{equation}
which models the displacement of oil by water across a sharp front. Owing to the non-convex flux, the exact entropy solution does not consist of a single shock but develops a compound wave: a smooth rarefaction fan directly attached to a leading shock propagating at the Welge tangent speed. This structure is notoriously difficult for standard numerical methods and classical PINNs, both of which tend to converge toward non-physical weak solutions or systematically misestimate the shock speed when the full entropy condition is not enforced~\cite{fuks2020limitations}.

The WE-PINNs solution at three reference times is shown in Fig.~\ref{fig:BL_Rarefaction_Shock}, compared against the exact entropy solution. The rarefaction portion is resolved smoothly and the leading shock is captured sharply at the analytical Welge location, with no spurious oscillations near the front. The standard PINNs, by contrast, produce a heavily diffused profile and fail to recover both the compound structure and the correct shock position, illustrating the failure mode predicted by the strong-form analysis of~\cite{fuks2020limitations}.

\begin{figure}[H]
  \centering
  \includegraphics[width=\textwidth]{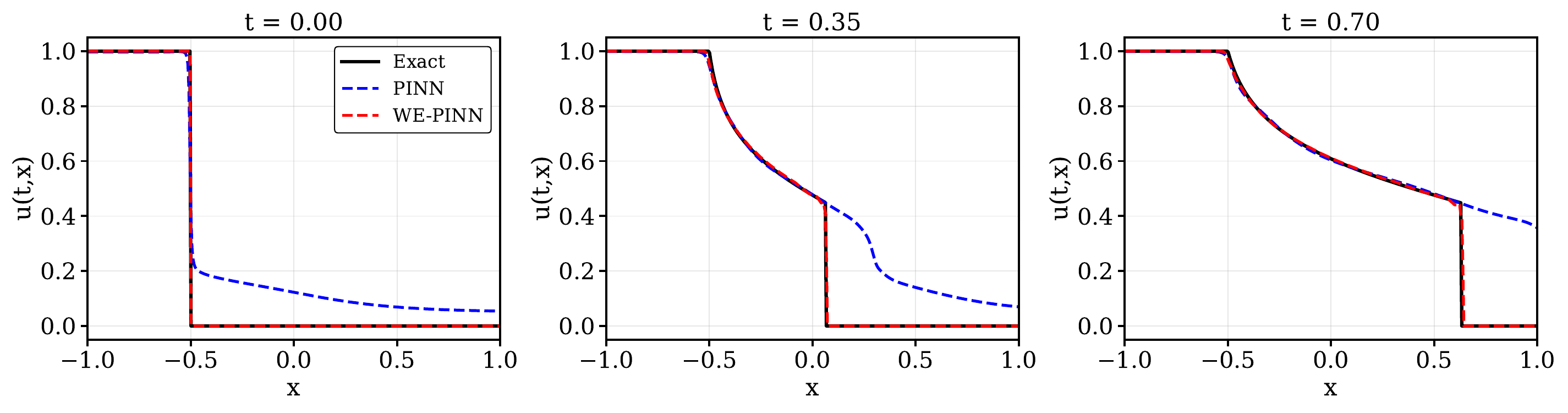}
  \caption{Solution profiles for the Buckley--Leverett compound wave (water injection) problem at $t=0$, $t=0.35$, and $t=0.70$. The smooth rarefaction fan and the attached leading shock are both captured accurately by WE-PINNs. Comparison against the exact entropy solution and the standard PINNs.}
  \label{fig:BL_Rarefaction_Shock}
\end{figure}

The training history is reported in Fig.~\ref{fig:BL_loss}. The total WE-PINNs loss decreases steadily, with both individual components $\mathcal{L}_{\mathrm{cons}}$ and $\mathcal{L}_{\mathrm{ent}}$ reaching low plateau values, confirming that the conservation identity and the Kru\v{z}kov entropy family are jointly enforced throughout training. The PDE residual of the standard PINNs stagnates at a significantly higher level throughout the entire optimization run. The space--time evolution in Fig.~\ref{fig:BL_shock_spacetime} further confirms that WE-PINNs reproduces the compound wave structure with a relative $L^2$ error tightly localized near the shock trajectory, whereas the standard PINNs error spreads broadly across the domain.

\begin{figure}[H]
  \centering
  \includegraphics[width=\textwidth]{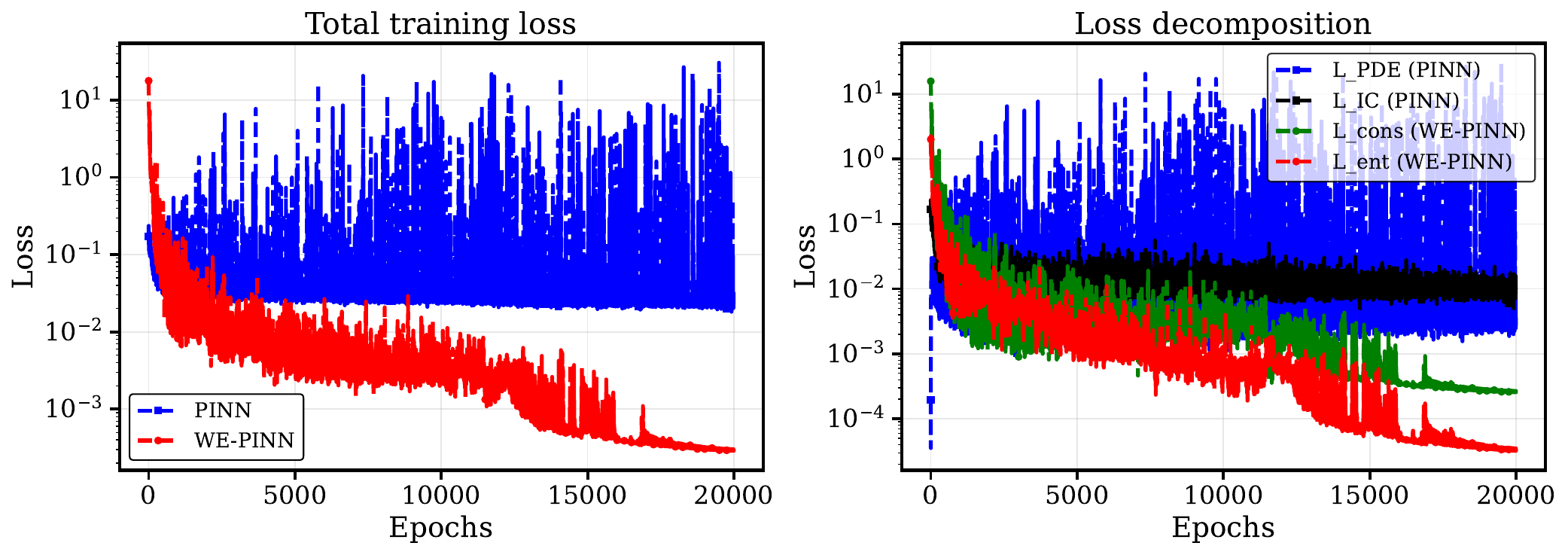}
  \caption{Training loss history for the Buckley--Leverett compound wave problem. \emph{Left}: total loss for both methods. \emph{Right}: displayed individual components $\mathcal{L}_{\mathrm{cons}}$ and $\mathcal{L}_{\mathrm{ent}}$ (WE-PINNs); $\mathcal{L}_{\mathrm{PDE}}$ and $\mathcal{L}_{\mathrm{IC}}$ (standard PINNs). The TVD component is included in the total WE-PINNs loss but is not shown separately in this panel.}
  \label{fig:BL_loss}
\end{figure}

\begin{figure}[H]
  \centering
  \includegraphics[width=\textwidth]{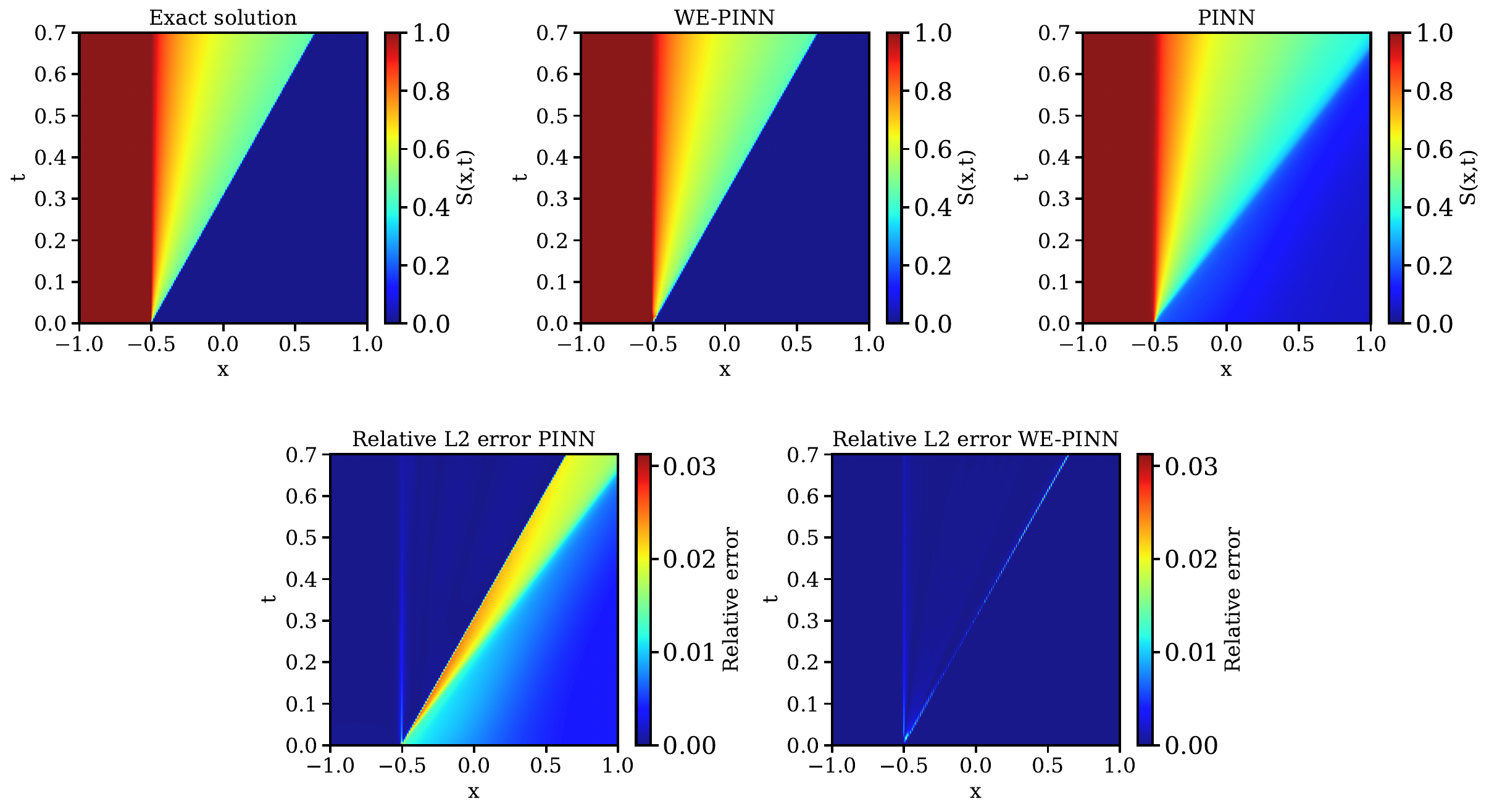}
  \caption{Space--time evolution of the Buckley--Leverett solution for the compound wave problem. \emph{Top row}: Exact solution, WE-PINNs solution, and standard PINNs solution. \emph{Bottom row}: pointwise relative $L^2$ error maps. The compound wave structure, a smooth rarefaction fan directly attached to a leading shock, is reproduced faithfully by WE-PINNs, while the standard PINNs solution is strongly diffused.}
  \label{fig:BL_shock_spacetime}
\end{figure}

The relative errors averaged over $t=0$, $0.35$, and $0.70$ are listed in Table~\ref{tab:BL_shock_errors}. WE-PINNs gives the best errors in all three norms, with CI-PINNs the closest competitor. In $L^1$, the error is $5.11\times10^{-3}$, about one order of magnitude below cvPINNs and nearly $38$ times below the standard PINNs. The $L^2$ and $L^\infty$ results show the same behavior: CI-PINNs remains competitive, but WE-PINNs keeps the compound wave sharper and places the leading shock more accurately.
\begin{table}[htbp]
\centering
\caption{Relative $L^1$, $L^2$, and $L^\infty$ errors (in units of $10^{-3}$) averaged over $t=0$, $0.35$, and $0.70$ for the Compound Wave Problem (Water Injection), shown for WE-PINNs, CI-PINNs, cvPINNs, and the standard PINNs. \textbf{Bold} indicates the best result and \underline{underline} the second best in each error group.}
\label{tab:BL_shock_errors}
\begin{tabular}{lccc}
\toprule
Model &  $E_{L^1}$ &  $E_{L^2}$ & $E_{L^\infty}$   \\
\midrule
WE-PINNs       & \textbf{5.11}    & \textbf{14.10}    & \textbf{116.30} \\
CI-PINNs        & \underline{7.04} & \underline{23.56} & \underline{188.90} \\
cvPINNs         & 54.46            & 129.70            & 538.00 \\
Standard PINNs  & 192.70           & 256.80            & 448.10 \\

\bottomrule
\end{tabular}
\end{table}

The compound wave configuration is particularly discriminating: methods that do not enforce the complete Kru\v{z}kov entropy family systematically converge to non-physical weak solutions or misestimate the shock speed, since the non-convex flux admits multiple weak solutions that satisfy a single entropy pair but violate the Oleinik condition. The results in Table~\ref{tab:BL_shock_errors} show that this entropy treatment improves accuracy without adding auxiliary potential networks, saddle-point optimization, or discrete flux reconstruction. The stochastic Kru\v{z}kov sampling strategy of Section~\ref{sec:ent_loss} is the only algorithmic modification relative to the convex-flux case, yet it suffices to enforce the Oleinik entropy condition and select the physical compound wave solution.

\subsubsection{Rarefaction--Shock Interaction}
\label{subsec:BL_creno}

The second test case extends the analysis to a configuration involving two simultaneously active wave families. The compactly supported initial condition
\begin{equation}
  u(x,0) =
  \begin{cases}
    1, & -0.5 < x \le 0, \\
    0, & \text{otherwise},
  \end{cases}
\end{equation}
generates a rarefaction wave at the left interface $x=-0.5$ and a compound wave at the right interface $x=0$, with final time $T=0.2$. This configuration is significantly more demanding than the corresponding Burgers benchmark of Section~\ref{subsec:burger_interaction}: a compound wave structure must be resolved at the right interface, a smooth rarefaction must be resolved simultaneously at the left interface, and the non-physical weak solutions permitted by a purely convex entropy pair must be ruled out by the full Kru\v{z}kov family. The test therefore probes both the geometric flexibility of the space--time control-volume formulation and the effectiveness of the stochastic Kru\v{z}kov sampling strategy in selecting the entropy solution under a non-convex flux.

The solution profiles in Fig.~\ref{fig:BL_creno} show that WE-PINNs captures both the smooth rarefaction portion and the sharp shock with no spurious oscillations and no visible phase error at the shock position, in close agreement with the Godunov finite-volume reference computed with $N_x=2000$ cells. The space--time plot in Fig.~\ref{fig:BL_creno_spacetime} confirms that the two wave fronts and their subsequent interaction are reproduced faithfully throughout the simulation, with the relative $L^2$ error of WE-PINNs tightly localized along the wave trajectories.

\begin{figure}[H]
  \centering
  \includegraphics[width=\textwidth]{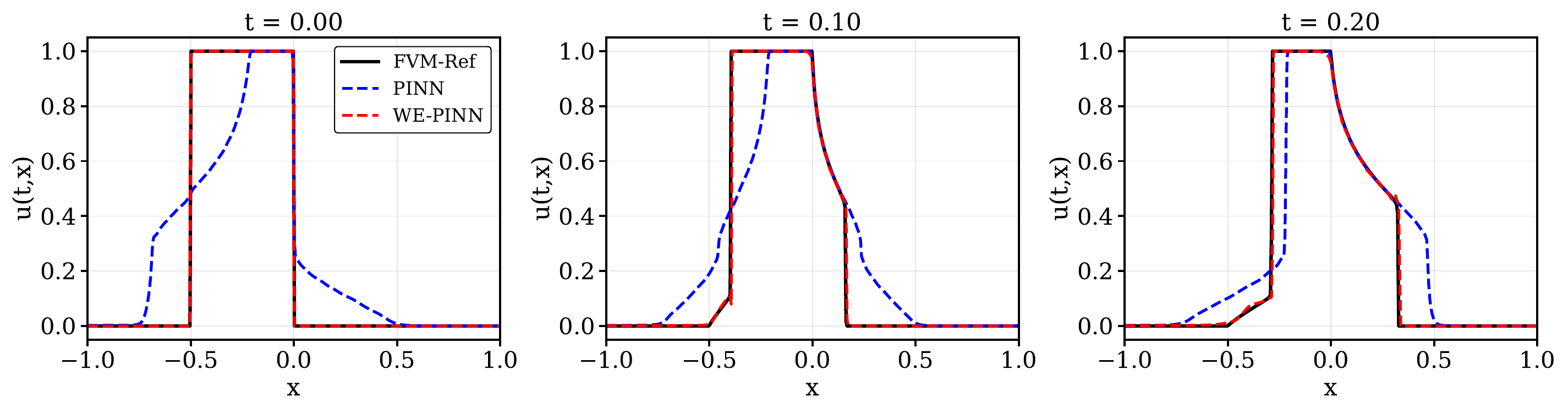}
  \caption{Solution profiles for the Buckley--Leverett rarefaction--shock interaction problem at selected times. Both the smooth rarefaction fan and the leading compound shock are resolved accurately by WE-PINNs. Comparison against the Godunov finite-volume reference ($N_x=2000$) and the standard PINNs.}
  \label{fig:BL_creno}
\end{figure}

\begin{figure}[htbp]
  \centering
  \includegraphics[width=\textwidth]{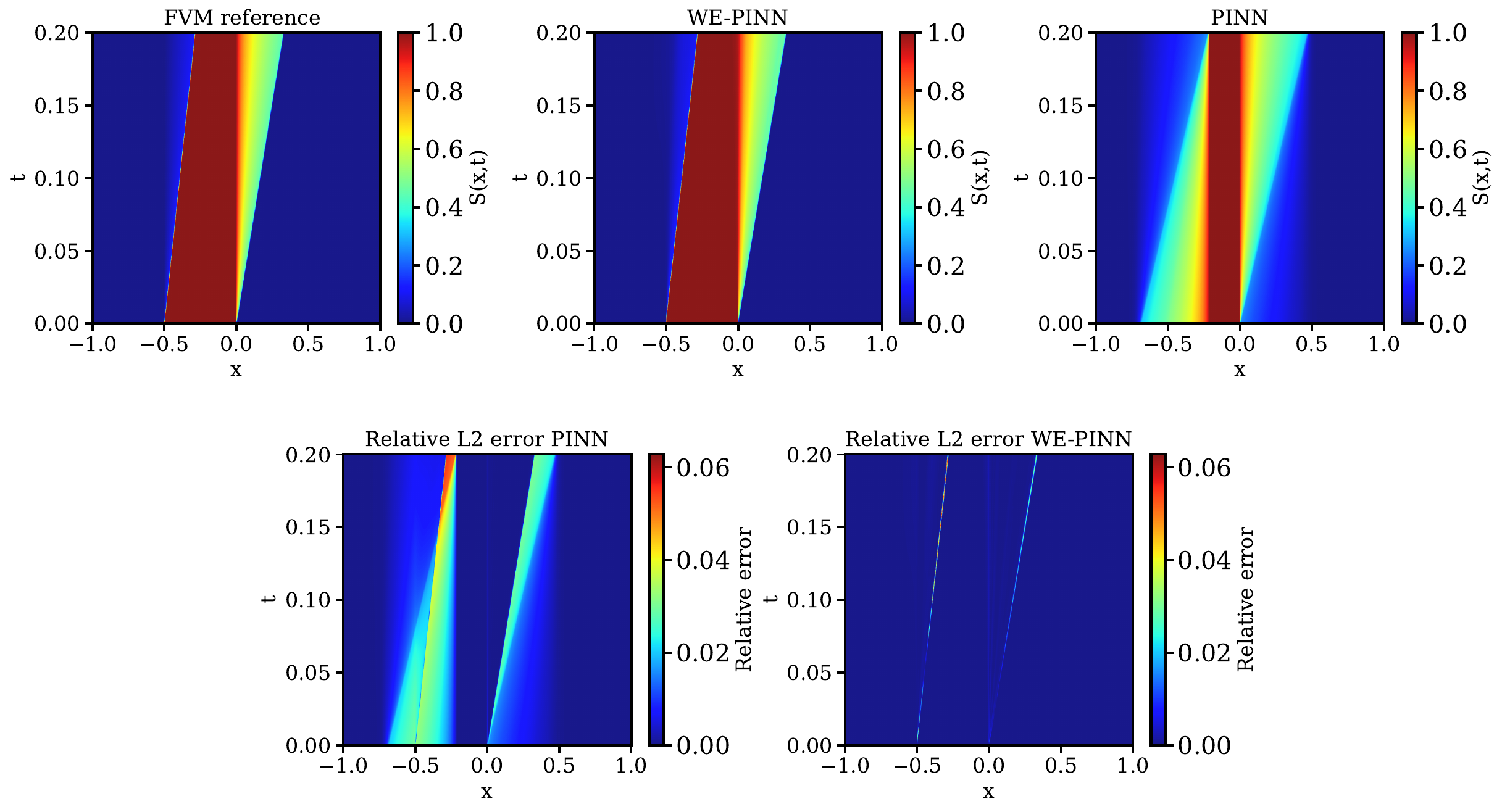}
  \caption{Space--time evolution of the Buckley--Leverett solution for the rarefaction--shock interaction problem. \emph{Top row}: reference solution, WE-PINNs solution, and standard PINNs solution. \emph{Bottom row}: pointwise relative $L^2$ error maps.}
  \label{fig:BL_creno_spacetime}
\end{figure}

The corresponding relative errors averaged over $t=0$, $0.10$, and $0.20$ are reported in Table~\ref{tab:BL_Rarefaction_Shock_errors}. WE-PINNs is the most accurate method in this second non-convex test as well. Its $L^1$ error is $8.21\times10^{-3}$, about $7$ times below cvPINNs and nearly $29$ times below the standard PINNs, while CI-PINNs remains between these two baselines. The $L^2$ and $L^\infty$ errors follow the same trend, with the largest gaps appearing near the compound shock.
\begin{table}[htbp]
\centering
\caption{Relative $L^1$, $L^2$, and $L^\infty$ errors (in units of $10^{-3}$) averaged over $t=0$, $0.10$, and $0.20$ for the rarefaction--shock interaction, shown for WE-PINNs, CI-PINNs, cvPINNs, and the standard PINNs. \textbf{Bold} indicates the best result and \underline{underline} the second best in each error group.}
\label{tab:BL_Rarefaction_Shock_errors}
\begin{tabular}{lccc}
\toprule
Model &  $E_{L^1}$ &  $E_{L^2}$ & $E_{L^\infty}$   \\
\midrule
WE-PINNs       & \textbf{8.21}     & \textbf{21.39}     & \textbf{173.80} \\
CI-PINNs        & 145.30            & 286.30             & 805.90 \\
cvPINNs         & \underline{56.13} & \underline{104.60} & \underline{416.60} \\
Standard PINNs  & 238.50            & 571.70             & 793.20 \\
\bottomrule
\end{tabular}
\end{table}
Taken together, the two Buckley--Leverett benchmarks confirm that WE-PINNs enforces the Oleinik entropy condition through the stochastic Kru\v{z}kov sampling strategy and selects the physically admissible solution in the presence of a non-convex flux, a regime where the standard PINNs and several competing methods fail to recover the correct wave structure. The results validate the extension of the WE-PINNs framework beyond the strictly convex case and motivate its application to hyperbolic systems in the following sections.


\subsection{Compressible Euler System}
\label{sec:euler}

We next consider the one-dimensional compressible Euler equations, which govern the motion of an inviscid compressible fluid and constitute a prototypical system of nonlinear hyperbolic conservation laws. In conservative form, the system reads
\begin{equation}
    \partial_t \mathbf{U} + \partial_x \mathbf{F}(\mathbf{U}) = 0,
\end{equation}
where the vector of conserved variables and the associated flux function are given by
\begin{equation}
    \mathbf{U} =
    \begin{pmatrix}
        \rho \\ \rho u \\ E
    \end{pmatrix},
    \qquad
    \mathbf{F}(\mathbf{U}) =
    \begin{pmatrix}
        \rho u \\ \rho u^2 + p \\ u(E + p)
    \end{pmatrix}.
\end{equation}
Here, $\rho$ denotes the mass density, $u$ the velocity, $p$ the pressure, and $E$ the total energy per unit volume. The system is closed by the ideal gas equation of state
\begin{equation}
    E = \frac{1}{2}\rho u^2 + \frac{p}{\gamma - 1},
\end{equation}
where $\gamma > 1$ is the ratio of specific heats. The entropy pair associated with this system is $\eta(\mathbf{U}) = -\rho s$ and $q(\mathbf{U}) = -\rho u s$, where $s = \ln(p / \rho^{\gamma})$ is the specific entropy of an ideal gas (see~\ref{app:entropies}).

The Euler system admits complex wave interactions involving shock waves, rarefaction waves, and contact discontinuities. The eigenvalues of the flux Jacobian $\partial \mathbf{F}/\partial \mathbf{U}$ are $\lambda_{1,2,3} = \{ u - c,\, u,\, u + c \}$, where $c = \sqrt{\gamma p / \rho}$ is the local speed of sound. The first and third characteristic families are genuinely nonlinear and generate shock waves and rarefaction fans, while the second family is linearly degenerate and gives rise to contact discontinuities across which density and entropy may jump while pressure and velocity remain continuous. The coexistence of these three wave families, each imposing distinct regularity requirements on the numerical approximation, makes the Euler system a considerably more demanding test than the scalar conservation laws of the preceding sections. To accommodate this increased complexity, we employ a network with $8$ hidden layers of $40$ neurons each with $\tanh$ activation, trained for $10000$ epochs in the 1D case, and $8$ hidden layers of $64$ neurons each with $\tanh$ activation, trained for $15000$ epochs in the 2D Euler case. Conservation is enforced through boundary flux integrals over dynamically sampled space--time control volumes, and entropy admissibility is incorporated in integral form via the constraint~\eqref{eq:entropy_inequality}.

\subsubsection{Sod Shock Tube Problem}
\label{sec:sod}

The classical Sod shock tube problem is considered as the primary benchmark for the one-dimensional Euler system. The computational domain is $\Omega = [-1,1]$, and the initial conditions are prescribed in terms of the primitive variables $(\rho, u, p)$ as
\begin{equation}
    \rho(x,0) =
    \begin{cases}
        1,     & x \leq 0, \\
        0.125, & x > 0,
    \end{cases}
    \qquad
    u(x,0) = 0,
    \qquad
    p(x,0) =
    \begin{cases}
        1,   & x \leq 0, \\
        0.1, & x > 0,
    \end{cases}
    \label{eq:sod_ic}
\end{equation}
with $\gamma = 1.4$. The exact entropy solution of this Riemann problem consists of three distinct wave structures propagating from the initial discontinuity at $x = 0$: a left-propagating centered rarefaction wave, a contact discontinuity travelling at intermediate speed, and a right-propagating shock wave. This benchmark is particularly demanding because it requires the simultaneous resolution of a smooth rarefaction fan, a sharp density jump across the contact discontinuity, and a strong compressive shock, each of which imposes distinct requirements on the accuracy and stability of the approximation.

The WE-PINNs solution profiles for density, velocity, pressure, and total energy at $t = 0$, $t = 0.2$, and $t = 0.4$ are shown in Fig.~\ref{fig:Sod_Shock_Tube}, compared against the exact entropy solution and the standard PINNs trained with an identical architecture and optimization budget. At all three reported times, the WE-PINNs solution captures the three-wave structure with high fidelity. The rarefaction fan is resolved smoothly without artificial steepening, the contact discontinuity is maintained as a sharp transition in the density and energy fields, and the shock wave is captured with minimal numerical smearing and without spurious post-shock oscillations. The correct shock speed follows directly from the integral conservation constraint, which implicitly enforces the Rankine--Hugoniot jump conditions without any special shock-tracking treatment. The standard PINNs, by contrast, significantly diffuse all three wave structures, underestimate the shock strength, and fail to maintain the sharp density jump at the contact discontinuity, a behavior consistent with the theoretical observation that strong-form residual minimization penalizes sharp gradients and biases the network towards overly smooth approximations~\cite{de2024wpinns}.

\begin{figure}[htbp]
    \centering
    \includegraphics[width=1.0\textwidth]{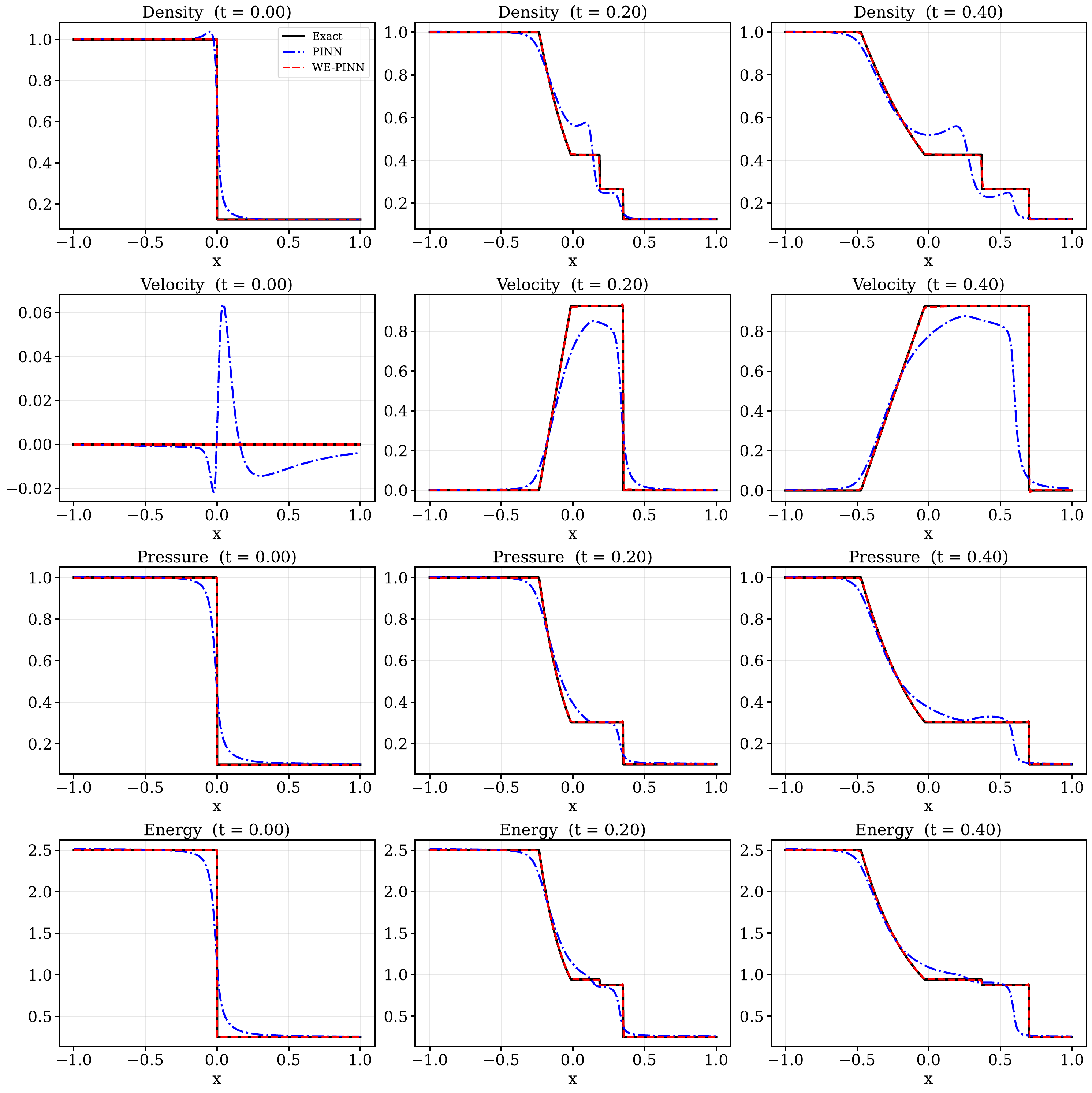}
    \caption{WE-PINNs solution of the one-dimensional Euler system for the Sod shock tube problem at $t = 0$, $0.2$, and $0.4$. Rows correspond to density $\rho$, velocity $u$, pressure $p$, and total energy $E$ over the domain $[-1,1]$. The exact entropy solution (black solid), WE-PINNs (red dashed), and the standard PINNs (blue dash-dotted) are superimposed. WE-PINNs accurately resolves the left-propagating rarefaction fan, the contact discontinuity, and the right-propagating shock wave simultaneously, while the standard PINNs diffuse all three wave structures.}
    \label{fig:Sod_Shock_Tube}
\end{figure}

The space--time evolution of density, velocity, and pressure is presented in Fig.~\ref{fig:Sod_space_time}. For each variable, the exact solution, the WE-PINNs approximation, the standard PINNs solution, and the corresponding relative $L^2$ error maps are shown side by side over the domain $[-1,1] \times [0, 0.4]$. The characteristic wave trajectories are clearly visible and consistent with the exact solution: the rarefaction fan spreads at the correct self-similar rate, the contact discontinuity translates at a nearly constant speed, and the shock front propagates as a sharp diagonal feature in the space--time plane. The plotted normalized pointwise error of WE-PINNs remains tightly localized along the wave trajectories. Its scale should not be identified with the global relative $L^\infty$ errors reported in the tables, which are computed from Eq.~\eqref{eq:rel_error} and are more sensitive to pointwise mismatch at the contact discontinuity and shock.
\begin{figure}[htbp]
    \centering
    \includegraphics[width=1\textwidth]{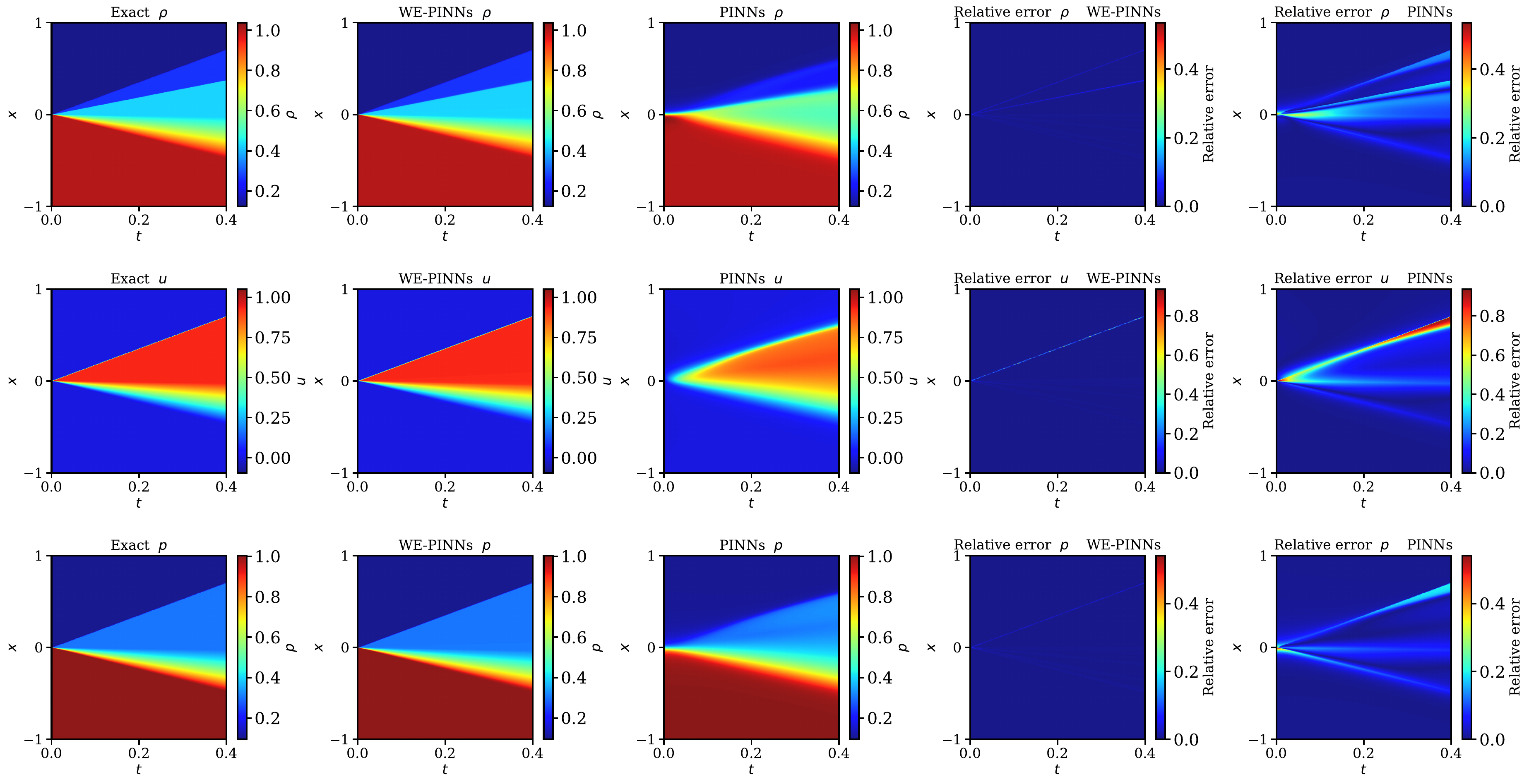}
    \caption{Space--time solutions and pointwise relative errors for the Sod shock tube problem. Rows correspond to density $\rho$, velocity $u$, and pressure $p$. Columns show, from left to right: exact solution, WE-PINNs solution, standard PINNs solution, relative $L^2$ error of WE-PINNs, and relative $L^2$ error of the standard PINNs.}
    \label{fig:Sod_space_time}
\end{figure}

The relative errors averaged over $t = 0.0$, $0.2$, and $0.4$ are reported in Table~\ref{tab:euler_errors} for all four conserved variables, comparing WE-PINNs against CI-PINNs, cvPINNs, and the standard PINNs. WE-PINNs gives the smallest $L^1$ error for every variable: $7.80\times10^{-4}$ for density, $5.45\times10^{-3}$ for velocity, $6.50\times10^{-4}$ for pressure, and $6.29\times10^{-4}$ for energy. For density, this is about $19$ times below cvPINNs and nearly two orders of magnitude below CI-PINNs and the standard PINNs. The advantage carries over to $L^2$ and $L^\infty$, with WE-PINNs errors remaining at the $10^{-3}$ level in $L^2$ for $\rho$, $p$, and $E$, and below $8.5\times10^{-2}$ in $L^\infty$ for all variables. The velocity component shows the strongest contrast: the standard PINNs heavily diffuse the contact discontinuity, while the integral formulation keeps the wave structure sharply localized and preserves the thermodynamic coupling across the conserved components.
\begin{table}[H]
\centering
\small
\setlength{\tabcolsep}{6pt}
\caption{Relative $L^1$, $L^2$, and $L^\infty$ errors (in units of $10^{-3}$)
averaged over $t=0$, $0.2$, and $0.4$ for the Sod shock tube problem.
\textbf{Bold} indicates the best result and \underline{underline} the second
best in each error group.}
\label{tab:euler_errors}
\begin{tabular}{llcccc}
\toprule
Norm & Method & $\rho$ & $u$ & $p$ & $E$ \\
\midrule
\multirow{4}{*}{$E_{L^1}$}
& WE-PINNs & \textbf{0.78} & \textbf{5.45} & \textbf{0.65} & \textbf{0.63} \\
& CI-PINNs & 46.90 & \underline{17.06} & 61.24 & 57.11 \\
& cvPINNs  & \underline{15.12} & 48.20 & \underline{9.62} & \underline{10.04} \\
& PINNs    & 46.10 & 378.90 & 43.89 & 43.62 \\
\midrule
\multirow{4}{*}{$E_{L^2}$}
& WE-PINNs & \textbf{3.79} & \textbf{20.87} & \textbf{3.39} & \textbf{3.46} \\
& CI-PINNs & 69.71 & 37.87 & 87.25 & 86.49 \\
& cvPINNs  & \underline{23.83} & \underline{28.57} & \underline{27.99} & \underline{31.87} \\
& PINNs    & 78.30 & 312.40 & 74.64 & 78.24 \\
\midrule
\multirow{4}{*}{$E_{L^\infty}$}
& WE-PINNs & \textbf{56.84} & \textbf{41.04} & \textbf{80.13} & \textbf{84.49} \\
& CI-PINNs & 282.80 & \underline{120.70} & 305.50 & 329.40 \\
& cvPINNs  & \underline{229.80} & 385.20 & \underline{297.70} & 330.30 \\
& PINNs    & 297.00 & 749.60 & 824.00 & \underline{326.40} \\
\bottomrule
\end{tabular}
\end{table}

The Sod results show that the weak space--time control-volume formulation extends naturally to systems of conservation laws. Weak conservation of the three components of $\mathbf{U}$ together with the integral entropy inequality yields a thermodynamically consistent multi-component solution without component-wise tuning or system-specific stabilization, and the sharp, oscillation-free resolution of the rarefaction, contact, and shock confirms that the Rankine--Hugoniot conditions and entropy admissibility are enforced weakly. These properties, established conditionally for the scalar case in Section~\ref{sec:convergence}, are observed to carry over to the system setting. A reproducibility study over $10$ independent runs (\ref{app:multirun}) confirms low sensitivity to initialization, with the standard deviation of the time-averaged $L^1$ error of order $10^{-4}$ for density and pressure and $10^{-3}$ for velocity.

\subsection{Two-Dimensional Euler Equations}
\label{sec:euler2d}

The WE-PINNs framework is further assessed on the two-dimensional compressible Euler equations, which represent the most demanding configuration considered in this work. The governing system takes the conservative form
\begin{equation}
    \frac{\partial \mathbf{U}}{\partial t}
    + \frac{\partial \mathbf{F}(\mathbf{U})}{\partial x}
    + \frac{\partial \mathbf{G}(\mathbf{U})}{\partial y}
    = 0,
    \label{eq:euler2d}
\end{equation}
where the conserved variable vector and the flux functions in the $x$- and $y$-directions are given by
\begin{equation}
    \mathbf{U} =
    \begin{pmatrix}
        \rho \\ \rho u \\ \rho v \\ E
    \end{pmatrix},
    \quad
    \mathbf{F}(\mathbf{U}) =
    \begin{pmatrix}
        \rho u \\ \rho u^2 + p \\ \rho u v \\ u(E + p)
    \end{pmatrix},
    \quad
    \mathbf{G}(\mathbf{U}) =
    \begin{pmatrix}
        \rho v \\ \rho u v \\ \rho v^2 + p \\ v(E + p)
    \end{pmatrix}.
\end{equation}
Here, $\rho(x,y,t)$ is the mass density, $(u,v)$ the velocity components, $p$ the pressure, and $E$ the total energy per unit volume, given by
\begin{equation}
    E = \frac{p}{\gamma - 1} + \frac{1}{2}\rho\left(u^2 + v^2\right),
\end{equation}
with $\gamma = 1.4$. The computational domain is $(x,y) \in [-1,1]^2$, and two discontinuous initial configurations of qualitatively different wave geometry are considered.

\subsubsection{Quadrant Riemann Problem}
\label{sec:euler2d_quadrant}

The first two-dimensional test case is the quadrant Riemann problem, in which the domain is partitioned into four quadrants with piecewise constant initial states:
\begin{equation}
    (\rho, u, v, p)(x,y,0) =
    \begin{cases}
        (2.0,\;\;0.75,\;\;\;0.5,\;\;1.0), & x \leq 0,\; y > 0, \\
        (1.0,\;\;0.75,\;-0.5,\;\;1.0),   & x > 0,\; y > 0, \\
        (1.0,\;-0.75,\;\;0.5,\;\;1.0),   & x \leq 0,\; y \leq 0, \\
        (3.0,\;-0.75,\;-0.5,\;\;1.0),    & x > 0,\; y \leq 0.
    \end{cases}
    \label{eq:quadrant_ic}
\end{equation}
This configuration generates a rich interaction pattern involving shock waves, rarefaction fans, and contact surfaces emanating simultaneously from all four quadrant boundaries and subsequently colliding near the origin of the domain. The resulting solution features strong spatial gradients, complex shock--shock and shock--rarefaction interactions, and regions of genuinely two-dimensional wave dynamics that cannot be accurately captured by dimension-by-dimension operator-splitting approaches. As no analytical closed-form reference solution exists for this configuration, the assessment combines qualitative consistency with high-resolution finite-volume computations and a quantitative comparison against a Godunov finite-volume reference on a $512\times512$ grid.

The WE-PINNs solution at selected times is shown in Fig.~\ref{fig:Euler_quadrant}. The density field correctly develops the characteristic four-way wave interaction pattern, with the quadrant boundaries giving rise to well-defined contact discontinuities and compressive shocks that propagate, interact, and merge over time. The velocity components $u$ and $v$ are resolved symmetrically with respect to the quadrant geometry and display the antisymmetric pattern consistent with the initial condition, confirming that the vector conservation structure of the system is respected throughout the simulation. The pressure field evolves towards a more uniform distribution in the central interaction region, in qualitative agreement with the expected post-interaction thermodynamic state. No spurious oscillations, artificial symmetry-breaking, or unphysical structures are observed, demonstrating that the multi-scale space--time sampling strategy effectively interrogates the wave interaction region at the resolutions required to capture its fine structure. A quantitative comparison against the high-resolution Godunov finite-volume reference is provided in Table~\ref{tab:2D_we_euler}.

\begin{table}[H]
\centering
\caption{Relative $L^1$, $L^2$, and $L^\infty$ errors for the two-dimensional compressible Euler equations (quadrant Riemann problem) at $t = 0.5$. Errors are computed over the full spatial domain $[-1,1]^2$ against the high-resolution Godunov finite-volume reference ($N_x \times N_y = 512 \times 512$).}
\label{tab:2D_we_euler}
\renewcommand{\arraystretch}{1.3}
\begin{tabular}{lccc}
\hline
Variable & $E_{L^1}$ & $E_{L^2}$ & $E_{L^\infty}$ \\
\hline
$\rho$ & $1.3960\times 10^{-2}$ & $4.0780\times 10^{-2}$ & $2.3871\times 10^{-1}$ \\
$u$    & $1.5380\times 10^{-2}$ & $2.2490\times 10^{-2}$ & $7.4037\times 10^{-1}$ \\
$v$    & $2.7190\times 10^{-2}$ & $1.6870\times 10^{-2}$ & $7.2690\times 10^{-1}$ \\
$p$    & $5.3400\times 10^{-3}$ & $8.8300\times 10^{-3}$ & $5.5510\times 10^{-2}$ \\
\hline
\end{tabular}
\end{table}
\begin{figure}[H]
    \centering
    \includegraphics[width=0.9\textwidth]{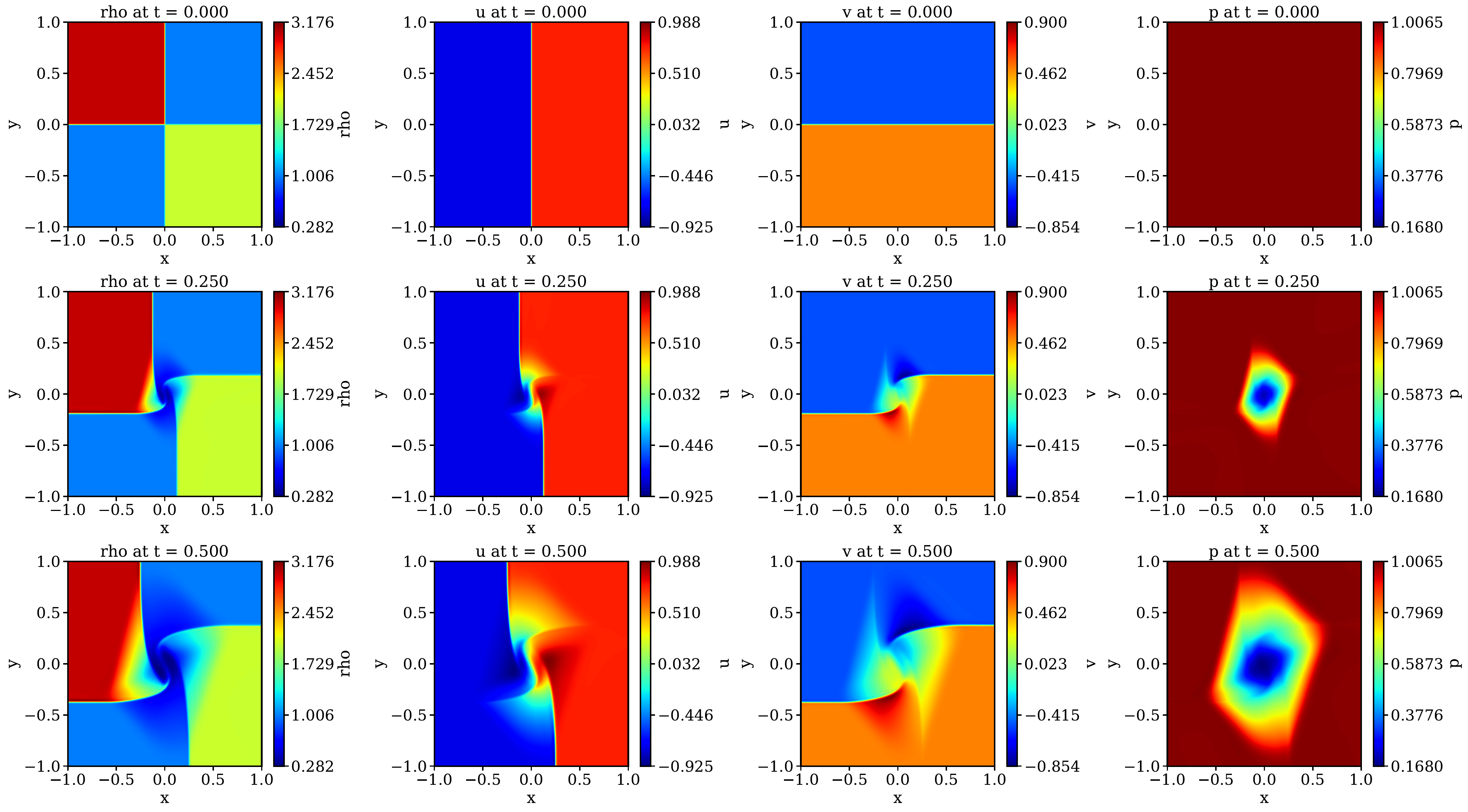}
    \caption{WE-PINNs solution of the two-dimensional Euler equations for the quadrant Riemann problem~\eqref{eq:quadrant_ic}. Shown are the density $\rho$, velocity components $u$ and $v$, and pressure $p$ over the domain $[-1,1]^2$ at selected times. The four-way wave interaction pattern, comprising contact discontinuities and compressive shocks propagating from each quadrant boundary, is faithfully captured without artificial symmetry breaking or spurious oscillations.}
    \label{fig:Euler_quadrant}
\end{figure}

\subsubsection{Radially Symmetric Riemann Problem}
\label{sec:euler2d_radial}

The second two-dimensional configuration is the radially symmetric Riemann problem, in which the initial discontinuity is a circular interface defined by
\begin{equation}
    \phi(x,y) = \sqrt{x^2 + y^2} - R, \qquad R = 0.13,
    \label{eq:radial_interface}
\end{equation}
with the initial condition
\begin{equation}
    (\rho, u, v, p)(x,y,0) =
    \begin{cases}
        (2.0,\; 0,\; 0,\; 15.0), & \phi(x,y) < 0, \\
        (1.0,\; 0,\; 0,\;\;\, 1.0), & \phi(x,y) \geq 0.
    \end{cases}
    \label{eq:radial_ic}
\end{equation}
The large pressure ratio of $15:1$ across the circular interface drives a strong outward-propagating shock wave and an inward-propagating rarefaction fan. Both waves preserve exact radial symmetry in the continuous solution, making this test particularly sensitive to anisotropic discretization errors. Mesh-based methods introduce directional biases that degrade the circular geometry of the shock front, especially on Cartesian grids. The mesh-free formulation of WE-PINNs, which samples space--time control volumes without reference to any underlying spatial lattice, is inherently free of such anisotropic artifacts and is therefore well-suited to this class of geometrically sensitive problems.

The WE-PINNs solution is presented in Fig.~\ref{fig:Euler_circular} at three successive times. The circular shock front expands outward at the correct radial speed and maintains its circular geometry throughout the simulation, with no discernible symmetry-breaking artifacts. The density and pressure fields display the expected annular structure: a sharp outer shock bounding a smooth intermediate region, transitioning to an approximately uniform high-density, low-pressure core. The velocity components $u$ and $v$ exhibit equal magnitudes and the correct antisymmetric spatial distribution consistent with purely radial outflow, confirming that the momentum equations are enforced consistently in both spatial directions. The total energy is correctly transported with the expanding wave structure, and no spurious reflections are observed at the domain boundaries. The corresponding relative errors at $t = 0.3$ are reported in Table~\ref{tab:euler2d_errors}.

\begin{table}[H]
\centering
\caption{Relative $L^1$, $L^2$, and $L^\infty$ errors for the two-dimensional compressible Euler equations (radially symmetric Riemann problem) at $t = 0.3$. Errors are computed over the full domain $[-1,1]^2$ against the high-resolution Godunov finite-volume reference ($N_x \times N_y = 512 \times 512$).}
\label{tab:euler2d_errors}
\renewcommand{\arraystretch}{1.3}
\begin{tabular}{lccc}
\hline
Variable & $E_{L^1}$ & $E_{L^2}$ & $E_{L^\infty}$ \\
\hline
$\rho$ & $2.0831\times 10^{-2}$ & $1.2033\times 10^{-2}$ & $3.5371\times 10^{-1}$ \\
$u$    & $3.1201\times 10^{-2}$ & $5.2808\times 10^{-2}$ & $3.8485\times 10^{-1}$ \\
$v$    & $6.2251\times 10^{-2}$ & $8.3637\times 10^{-2}$ & $4.6726\times 10^{-1}$ \\
$p$    & $1.2964\times 10^{-2}$ & $7.3641\times 10^{-2}$ & $3.8145\times 10^{-1}$ \\
\hline
\end{tabular}
\end{table}

\begin{figure}[H]
    \centering
    \includegraphics[width=0.9\textwidth]{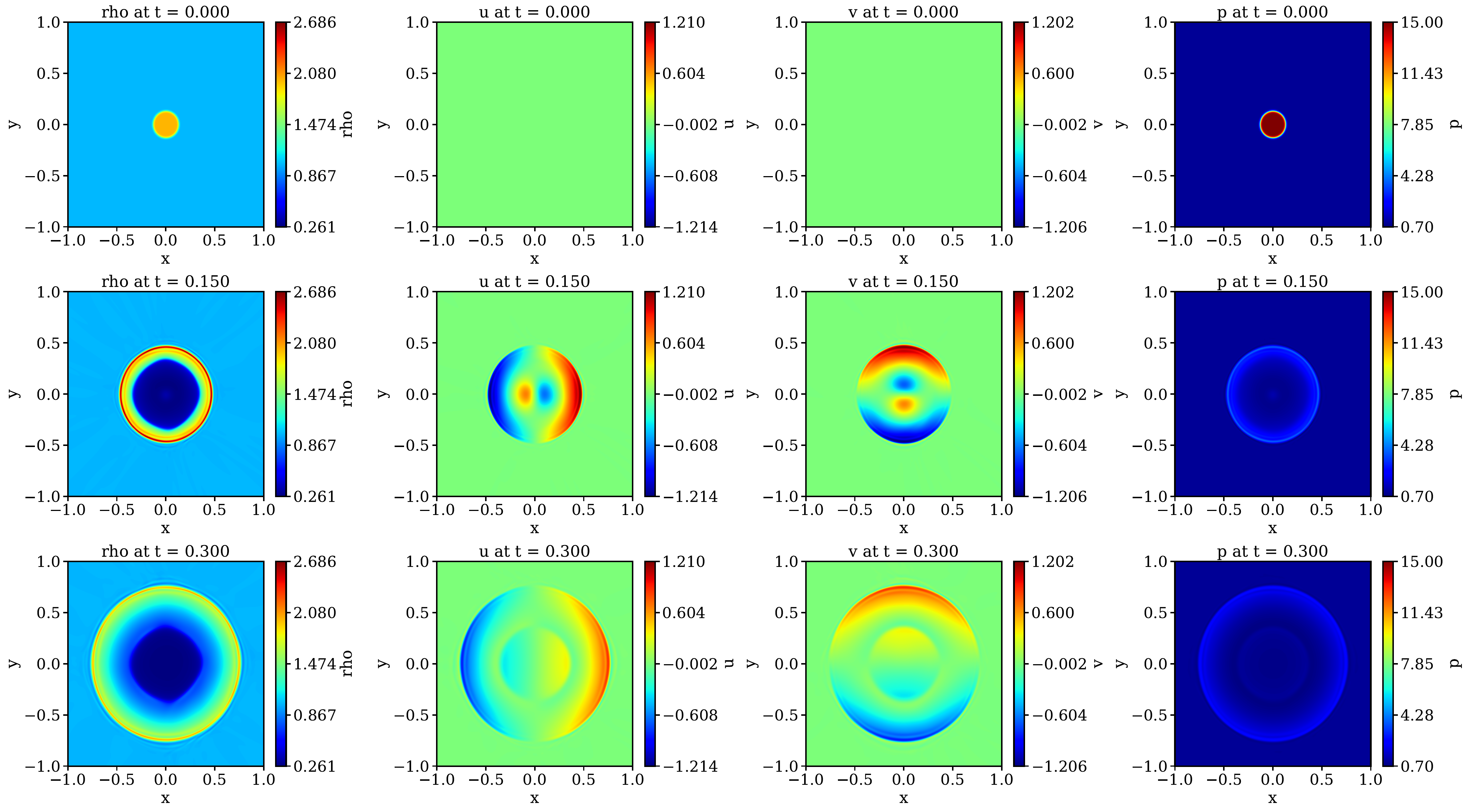}
    \caption{WE-PINNs solution of the two-dimensional Euler equations for the radially symmetric Riemann problem~\eqref{eq:radial_ic} with interface radius $R = 0.13$ and pressure ratio $15:1$. Shown are the density $\rho$, velocity components $u$ and $v$, and pressure $p$ over the domain $[-1,1]^2$ at three successive times. The circular shock front expands outward at the correct radial speed while preserving exact radial symmetry throughout the simulation, with no anisotropic artifacts introduced by the mesh-free formulation.}
    \label{fig:Euler_circular}
\end{figure}

The two-dimensional Euler experiments collectively demonstrate that the WE-PINNs framework extends naturally to multi-dimensional systems of conservation laws without any modification to the core methodology. The space--time control-volume formulation applies directly in two spatial dimensions through the tensor-product Gauss--Legendre quadrature described in Section~\ref{sec:cv_formulation}, and no special treatment is required for curved or non-axis-aligned discontinuities. The accurate resolution of the radially symmetric configuration, in particular, underscores the structural advantage of the mesh-free approach in problems where the solution geometry does not align with any preferred Cartesian direction. Taken together with the one-dimensional Sod shock tube results, these experiments confirm that WE-PINNs constitutes a robust and versatile framework for the numerical approximation of entropy solutions to nonlinear hyperbolic systems across multiple spatial dimensions.

\section{Conclusion}\label{sec:conclusion}

We have presented WE-PINNs, a mesh-free space--time control-volume framework for the approximation of entropy solutions to nonlinear hyperbolic conservation laws. The method replaces the classical strong-form residual minimization with a weak formulation derived from the divergence theorem, enforcing conservation through boundary flux integrals over dynamically sampled space--time control volumes and incorporating entropy admissibility in integral form via the full Kru\v{z}kov family of one-sided penalties. The resulting loss functional is free of spatial derivatives of the neural approximation, introduces no artificial viscosity, relies on no fixed discretization mesh, and remains well-defined in the presence of shock discontinuities. A single feedforward network suffices, with no auxiliary potential networks, no test-function networks, and no saddle-point optimization.

 The contributions of this work are threefold. First, we formulated WE-PINNs as a mesh-free weak space--time control-volume method that simultaneously enforces integral conservation, integral Kru\v{z}kov entropy conditions for the full one-parameter family, and total-variation control, thereby addressing the structural limitations identified in existing weak, integral, and finite-volume-informed PINNs formulations. Second, we established a conditional $L^1$ convergence estimate via the Bouchut--Perthame framework for scalar conservation laws: the $L^1$ error towards the entropy solution is controlled by the entropy loss and an explicit sampling-consistency residual, as $\|U_\theta(\cdot,T)-u(\cdot,T)\|_{L^1} \leq K_{\mathrm{opt}}(\mathcal{L}_{\mathrm{ent}}^{1/4} + (R_{\mathrm{samp}}^{\mathrm{ent}})^{1/2})$, while the weak conservation defect is controlled separately by the conservation loss. To the best of our knowledge, this is the first such conditional $L^1$ estimate for a fully mesh-free space--time control-volume PINNs formulation with integral Kru\v{z}kov entropy enforcement. Third, we provided comprehensive empirical validation across scalar equations with convex and non-convex fluxes and hyperbolic systems in one and two spatial dimensions.

Numerical experiments on the inviscid Burgers equation, the Buckley--Leverett equation and the compressible Euler equations demonstrate that WE-PINNs accurately resolves shock waves, rarefaction fans, contact discontinuities, and compound wave structures in both scalar and system settings. The stochastic Kru\v{z}kov sampling strategy enforces the Oleinik entropy condition for the non-convex Buckley--Leverett flux and selects the physically admissible solution in cases where methods based on weaker entropy information can fail. For the Euler system, the framework extends naturally to one- and two-dimensional configurations including oblique and radially symmetric discontinuities without any modification to the core methodology, demonstrating the geometric flexibility of the mesh-free formulation. Across the reported tests, the method remains stable in both smooth and shock-dominated regimes without artificial viscosity or explicit shock tracking, and its $L^1$ errors are typically several times to one order of magnitude below those of existing PINNs-based methods.

These results open several directions for future research. Extending the $L^1$ convergence analysis to systems of conservation laws remains the most important theoretical open problem, as the Bouchut--Perthame framework currently applies only to the scalar case. On the algorithmic side, adaptive strategies for dynamically selecting the loss weights $\lambda_1$, $\lambda_2$, and $\lambda_{\mathrm{TVD}}$ during training, higher-order quadrature schemes for non-rectangular control volumes, and residual-based adaptive sampling of the entropy index $\kappa$ in the Kru\v{z}kov family are natural extensions that could further improve accuracy and computational efficiency. Finally, extending this mesh-free framework to equations incorporating diffusion terms, such as the Navier--Stokes equations, and generalizing the formulation to handle more complex geometries of real-world engineering applications \cite{OUBARKA202658}, represent promising directions for future research.

\section*{Acknowledgments}
The authors gratefully acknowledge the support and computing resources provided by the Toubkal Supercomputer~\cite{kissami2025toubkal} (https://toubkal.um6p.ma) at UM6P, Morocco, where all computational experiments were carried out on NVIDIA A100 GPUs.
\section*{Code Availability}
The source code is publicly available at \href{https://github.com/oubarka-ismail/WE-PINNs}{https://github.com/oubarka-ismail/WE-PINNs}
\bibliographystyle{elsarticle-num} 
\bibliography{bibloigraphy}

\appendix

\section{Entropy Pairs for the Considered Models}
\label{app:entropies}

This appendix collects the convex entropy flux pairs $(\eta, q)$ employed throughout the paper for each of the three conservation laws under consideration. In each case, the entropy function $\eta$ is convex with respect to the conservative variables, and the entropy flux $q$ satisfies the compatibility condition
\begin{equation}
    \nabla_U q(U) = \nabla_U \eta(U)\, \nabla_U F(U),
    \label{eq:entropy_compatibility}
\end{equation}
where $F(U)$ denotes the physical flux of the corresponding system. This ensures that smooth solutions satisfy the entropy equality, whereas weak solutions across shocks satisfy the entropy inequality
\begin{equation}
    \partial_t\, \eta(U) + \nabla_{\mathbf{x}} \cdot q(U) \leq 0
    \label{eq:entropy_ineq}
\end{equation}
in the sense of distributions, consistent with the Lax entropy admissibility condition~\cite{Lax1973}. The integral form of this inequality, obtained by integrating over a space--time control volume $D$ and applying the divergence theorem, is the entropy loss $\mathcal{L}_{\mathrm{ent}}$ defined in Section~\ref{sec:ent_loss}.

\subsection{Burgers Equation}
\label{app:entropy_burgers}

For the scalar Burgers equation
\begin{equation}
    \partial_t\, u + \partial_x\!\left(\frac{u^2}{2}\right) = 0,
\end{equation}
we employ the Kru\v{z}kov entropy family~\cite{Kruzkov1970}. For any constant $\kappa \in \mathbb{R}$, the entropy--flux pair is defined by
\begin{equation}
    \eta(u;\, \kappa) = |u - \kappa|,
    \qquad
    q(u;\, \kappa) = \mathrm{sgn}(u - \kappa)
                \left(\frac{u^2}{2} - \frac{\kappa^2}{2}\right),
    \label{eq:kruzkov_burgers}
\end{equation}
where $\mathrm{sgn}$ denotes the sign function. A weak solution $u$ is the entropy solution in the sense of Kru\v{z}kov~\cite{Kruzkov1970} if and only if
\begin{equation}
    \partial_t\, |u - \kappa| \;+\;
    \partial_x\!\left[\,\mathrm{sgn}(u - \kappa)
    \left(\frac{u^2}{2} - \frac{\kappa^2}{2}\right)\right] \leq 0
    \label{eq:kruzkov_burgers_ineq}
\end{equation}
holds in the sense of distributions for all $\kappa \in \mathbb{R}$.

\subsection{Buckley--Leverett Equation}
\label{app:entropy_bl}

For the Buckley--Leverett equation
\begin{equation}
    \partial_t\, u + \partial_x\, f(u) = 0,
\end{equation}
we likewise employ the Kru\v{z}kov entropy family~\cite{Kruzkov1970}. The flux function $f$ is given by
\begin{equation}
    f(u) = \frac{u^2}{u^2 + a\,(1 - u)^2}, \qquad u \in [0,\,1],
    \label{eq:bl_flux_app}
\end{equation}
where $a > 0$ is the viscosity ratio parameter and $u$ denotes the water saturation. For any constant $\kappa \in \mathbb{R}$, the
entropy--flux pair reads
\begin{equation}
    \eta(u;\, \kappa) = |u - \kappa|,
    \qquad
    q(u;\, \kappa) = \mathrm{sgn}(u - \kappa)\!\left(f(u) - f(\kappa)\right),
    \label{eq:kruzkov_bl}
\end{equation}
and a weak solution $u$ is the entropy solution in the sense of Kru\v{z}kov~\cite{Kruzkov1970} if and only if
\begin{equation}
    \partial_t\,|u - \kappa| \;+\;
    \partial_x\!\left[\,\mathrm{sgn}(u - \kappa)
    \left(f(u) - f(\kappa)\right)\right] \leq 0
    \label{eq:kruzkov_bl_ineq}
\end{equation}
holds in the sense of distributions for all $\kappa \in \mathbb{R}$. This family is necessary to enforce the Oleinik entropy condition for the non-convex flux $f$ and select the physically admissible compound wave solution, as discussed in Section~\ref{sec:BL}.

\subsection{Compressible Euler Equations}
\label{app:entropy_euler}

For the compressible Euler equations
\begin{equation}
    \partial_t\, U + \nabla_{\mathbf{x}} \cdot F(U) = 0,
    \qquad
    U = \begin{pmatrix} \rho \\ \rho \mathbf{u} \\ E \end{pmatrix},
    \qquad
    F(U) = \begin{pmatrix} \rho \mathbf{u} \\
                           \rho \mathbf{u} \otimes \mathbf{u}
                           + p\,\mathbf{I} \\
                           (E + p)\,\mathbf{u}
            \end{pmatrix},
    \label{eq:euler_app}
\end{equation}
the mathematical entropy flux pair is given by~\cite{godlewski2013numerical}
\begin{equation}
    \eta(U) = -\rho\, s,
    \qquad
    \mathbf{q}(U) = -\rho\, \mathbf{u}\, s,
    \label{eq:euler_entropy}
\end{equation}
where $\rho$ denotes the density, $\mathbf{u} \in \mathbb{R}^d$ the velocity vector, $E$ the total energy per unit volume, $\mathbf{I}$ the identity tensor, and $s$ the specific thermodynamic entropy. For a polytropic ideal gas, $s$ is given by
\begin{equation}
    s = \ln\!\left(\frac{p}{\rho^{\gamma}}\right),
    \label{eq:specific_entropy}
\end{equation}
where $p$ is the thermodynamic pressure and $\gamma > 1$ is the ratio of specific heats. The pressure is recovered from the total energy via the equation of state
\begin{equation}
    p = (\gamma - 1)\!\left(E - \frac{1}{2}\,\rho\,
    |\mathbf{u}|^2\right).
    \label{eq:eos}
\end{equation}
The function $\eta$ defined in~\eqref{eq:euler_entropy} is convex with respect to the conservative variables $U = (\rho,\, \rho \mathbf{u},\, E)^{\top}$.

\section{Baseline configurations and the fair-comparison protocol}
\label{app:baselines}

All baseline methods compared against WE-PINNs in Section~\ref{sec:results}, the Coupled Integral PINNs (CI-PINNs)~\cite{CIPINN2024}, the control-volume PINNs (cvPINNs)~\cite{Patel2022}, and the standard PINNs, are re-implemented within the same training harness as WE-PINNs and run under a matched comparison protocol: the same network architecture, optimizer, training budget, spatial domain, and initial/boundary data are used whenever the method allows it. The loss formulation and its associated method-specific choices, such as loss weights, quadrature settings, control-volume counts, or collocation counts, are kept explicit and reported below. This choice is deliberate. It reduces differences due to network size, optimizer schedule, or training budget while preserving the defining loss structure of each baseline. A direct consequence is that our re-implementations do \emph{not} aim to reproduce the exact hyperparameters of the original references: each baseline retains its loss formulation faithfully while adopting the matched protocol where applicable, so the numerical values reported here are not expected to match, and should not be compared with, the values reported in the original papers, which were obtained under different architectures and training budgets.

For transparency, Tables~\ref{tab:cfg_cv}--\ref{tab:cfg_ci} document, side by side, the configuration used in this work and the settings reported in the original papers. The upper rows identify the architecture and optimization choices matched across methods; the lower rows record the method-specific numerical and loss choices used by each baseline.

\begin{table}[htbp]
\centering
\caption{cvPINNs: configuration used in this work under the matched comparison protocol versus the original formulation of Patel et al.~\cite{Patel2022} (Buckley--Leverett case, their Table~A.3). The upper block lists the matched architecture/optimization choices; the lower block lists the cvPINNs-specific numerics.}
\label{tab:cfg_cv}
\begin{tabular}{@{}l p{6.2cm} p{5.8cm}@{}}
\toprule
Item & This work (matched protocol) & Patel et al.~\cite{Patel2022} \\
\midrule
Network             & MLP $8\times32$, $\tanh$                              & MLP $8\times64$, ReLU \\
Initialization      & PyTorch default                                      & Glorot \\
Optimizer           & Adam $5\times10^{-3}$ (cosine $\to10^{-4}$) + L-BFGS  & Adam $10^{-4}$ \\
Training budget     & $2\times10^{4}$ Adam iterations (+ L-BFGS)            & $10^{6}$ iterations \\
\midrule
Control volumes     & $n_x\times n_t = 50\times50$                         & $200\times200$ cells \\
Face quadrature     & Gauss--Legendre, $q=8$                              & midpoint, $1$ segment \\
Residual scaling    & per-cell $/\,\mathrm{area}$                          & none (raw RMS) \\
Loss weighting      & adaptive (running-mean normalizer $+$ augmented Lagrangian) & fixed $\varepsilon_{\mathrm{ent}}=\varepsilon_{\mathrm{TVD}}=1$ \\
Entropy             & single pair, one-sided $\mathrm{relu}$               & single pair $\eta=u^2/2$ \\
TVD control         & $\mathrm{relu}(TV_{j+1}-TV_j)^2$                     & cell-based TV penalty \\
Artificial viscosity& off                                                  & off (Fig.~3; available, $\propto\Delta x^2$) \\
Initial condition   & weak (bottom control-volume face $=u_0$)             & weak (control-volume face) \\
\bottomrule
\end{tabular}
\end{table}

\begin{table}[htbp]
\centering
\caption{CI-PINNs: configuration used in this work under the matched comparison protocol versus Wang \& Yang~\cite{CIPINN2024}. The dual-network loss structure (Eq.~5 of~\cite{CIPINN2024}) is reproduced faithfully; the architecture and optimizer are matched with the other methods.}
\label{tab:cfg_ci}
\begin{tabular}{@{}l p{6.2cm} p{5.8cm}@{}}
\toprule
Item & This work (matched protocol) & Wang \& Yang~\cite{CIPINN2024} \\
\midrule
Architecture        & dual MLP (primitive $\tilde u$ $+$ potential $\tilde S$), each $8\times32$, $\tanh$ & dual fully-connected MLPs \\
Optimizer / budget  & Adam $5\times10^{-3}$ $+$ L-BFGS, $2\times10^{4}$ iters & not tabulated in~\cite{CIPINN2024} \\
Loss                & Eq.~(5): five terms (IBC, physical, coupling, entropy, adaptive-strong) & Eq.~(5) (identical) \\
Collocation         & mesh-free random, $n_f=5000$                         & mesh-free pointwise (no quadrature) \\
Loss weighting      & fixed, tuned ($\lambda_{\mathrm{ibc}}=10$, $\lambda_{\mathrm{phy}}=\lambda_{\mathrm{cpl}}=30$, $\lambda_{\mathrm{ent}}=\lambda_{\mathrm{str}}=1$) & fixed weights \\
Entropy term        & $\mathrm{relu}(u\,R)^2$, $\eta=u^2/2$, $R=u_t+f_x$   & $\max(0,\partial_t\eta+\nabla\!\cdot\!\phi)^2$, $\eta$ general \\
Adaptive-strong mask& $w=\sigma\!\left(10\,\mathrm{relu}(-u_x)\right)$      & $w=\sigma\!\left(k\,\mathrm{ReLU}(-\nabla\!\cdot\!\tilde v)\right)$ \\
Initial condition   & soft (IBC penalty)                                   & soft (IBC penalty) \\
\bottomrule
\end{tabular}
\end{table}

For CI-PINNs we additionally tuned the conservation-term weights ($\lambda_{\mathrm{phy}}=\lambda_{\mathrm{cpl}}=30$ rather than unit weights): with unit weights the coupled potential under-weights the Rankine--Hugoniot balance and the shock speed is systematically under-estimated. The baseline is therefore reported at its most favorable setting, which only strengthens the fairness of the comparison. The remaining method-specific knobs (the Gauss--Legendre order $q$ and the control-volume count for cvPINNs, the number of collocation points $n_f$ for CI-PINNs) are kept fixed across all test cases.

\section{Parametric Study: Influence of Quadrature Order and Network
Architecture on the Burgers Equation}
\label{app:parametric}

This appendix provides the complete numerical data underlying the parametric study summarized in Section~\ref{subsec:burger_convergence}. This study is separate from the fixed-configuration benchmark comparisons: it deliberately varies quadrature order, network depth, and network width to document the robustness of the loss--error trend and the choice of the default configuration used in the main Burgers tests. Table~\ref{tab:parametric_study} reports, for each combination of quadrature order $Q \in \{3, 5, 7, 9, 11\}$, network depth $L \in \{4, 8, 16\}$ hidden layers, and width $N \in \{16, 32, 64\}$ neurons per layer, the total loss $\mathcal{L}$ at the end of training, the $L^1$ error at $t = T$, and the wall-clock training and inference times. All experiments are performed on the Burgers equation with the sinusoidal initial condition of Section~\ref{subsec:burger_sin}, on a structured Cartesian grid of control volumes. The two optimization failures identified in Section~\ref{subsec:burger_convergence}, namely the configurations $(Q=9,\, L=16,\, N=64)$ and $(Q=11,\, L=16,\, N=64)$, are clearly visible as outliers with total loss and $L^1$ error well above the observed reference trend.

\begin{table}[htbp]
\centering
\caption{Parametric study results for the Burgers equation on a structured Cartesian grid. The influence of quadrature order $Q$, network depth (Layers), and network width (Neurons) on the total loss $\mathcal{L}$, $L^1$ error at $t=T$, training time $T_{\mathrm{train}}$ (s), and inference time $T_{\mathrm{infer}}$ (s) is reported. Configurations exhibiting optimization failure are identifiable as outliers with total loss of order $10^{-1}$ or larger.}
\label{tab:parametric_study}
\scriptsize
\renewcommand{\arraystretch}{1.1}
\setlength{\tabcolsep}{4pt}
\makebox[\textwidth][c]{%
\resizebox{\textwidth}{!}{%
\begin{tabular}{cc cccc cccc cccc}
\toprule
& &
\multicolumn{4}{c}{\textbf{Layers = 4}} &
\multicolumn{4}{c}{\textbf{Layers = 8}} &
\multicolumn{4}{c}{\textbf{Layers = 16}} \\
\cmidrule(lr){3-6}\cmidrule(lr){7-10}\cmidrule(lr){11-14}
\textbf{$Q$} & \textbf{Neurons} &
$\mathcal{L}$ & $L^1$ & $T_{\mathrm{train}}(s)$ & $T_{\mathrm{infer}}(s)$ &
$\mathcal{L}$ & $L^1$ & $T_{\mathrm{train}}(s)$ & $T_{\mathrm{infer}}(s)$ &
$\mathcal{L}$ & $L^1$ & $T_{\mathrm{train}}(s)$ & $T_{\mathrm{infer}}(s)$ \\
\midrule
\multirow{3}{*}{3}
& 16 & $2.19\times10^{-2}$ & $1.61\times10^{-2}$ & 367.2  & 0.0005
     & $1.23\times10^{-3}$ & $9.60\times10^{-3}$ & 481.5  & 0.0006
     & $2.47\times10^{-2}$ & $5.04\times10^{-2}$ & 772.9  & 0.0009 \\
& 32 & $5.51\times10^{-4}$ & $5.65\times10^{-3}$ & 370.5  & 0.0005
     & $9.48\times10^{-3}$ & $5.76\times10^{-2}$ & 482.5  & 0.0006
     & $2.53\times10^{-2}$ & $5.95\times10^{-2}$ & 770.5  & 0.0012 \\
& 64 & $2.45\times10^{-2}$ & $2.56\times10^{-2}$ & 362.6  & 0.0005
     & $7.94\times10^{-4}$ & $1.01\times10^{-2}$ & 478.3  & 0.0006
     & $5.63\times10^{-4}$ & $6.76\times10^{-3}$ & 803.0  & 0.0010 \\
\midrule
\multirow{3}{*}{5}
& 16 & $1.11\times10^{-3}$ & $5.84\times10^{-3}$ & 378.2  & 0.0006
     & $8.16\times10^{-3}$ & $4.43\times10^{-2}$ & 475.0  & 0.0006
     & $4.57\times10^{-3}$ & $4.51\times10^{-2}$ & 769.6  & 0.0011 \\
& 32 & $1.79\times10^{-4}$ & $5.57\times10^{-3}$ & 372.5  & 0.0005
     & $1.24\times10^{-2}$ & $5.53\times10^{-2}$ & 474.5  & 0.0007
     & $1.53\times10^{-2}$ & $2.98\times10^{-2}$ & 768.3  & 0.0011 \\
& 64 & $2.72\times10^{-3}$ & $4.13\times10^{-2}$ & 417.8  & 0.0005
     & $6.18\times10^{-4}$ & $5.47\times10^{-3}$ & 585.6  & 0.0006
     & $2.78\times10^{-2}$ & $3.37\times10^{-2}$ & 1012.3 & 0.0011 \\
\midrule
\multirow{3}{*}{7}
& 16 & $1.21\times10^{-4}$ & $3.32\times10^{-3}$ & 371.9  & 0.0005
     & $2.76\times10^{-4}$ & $6.78\times10^{-3}$ & 477.5  & 0.0006
     & $3.12\times10^{-4}$ & $4.76\times10^{-3}$ & 770.3  & 0.0009 \\
& 32 & $5.13\times10^{-4}$ & $7.08\times10^{-3}$ & 375.6  & 0.0005
     & $4.91\times10^{-5}$ & $3.92\times10^{-3}$ & 497.9  & 0.0006
     & $5.48\times10^{-5}$ & $5.13\times10^{-3}$ & 837.7  & 0.0009 \\
& 64 & $1.84\times10^{-4}$ & $4.72\times10^{-3}$ & 466.8  & 0.0005
     & $7.82\times10^{-4}$ & $3.80\times10^{-3}$ & 753.0  & 0.0006
     & $8.49\times10^{-4}$ & $5.43\times10^{-3}$ & 1282.5 & 0.0011 \\
\midrule
\multirow{3}{*}{9}
& 16 & $8.91\times10^{-5}$ & $6.25\times10^{-3}$ & 370.7  & 0.0005
     & $8.14\times10^{-5}$ & $2.32\times10^{-3}$ & 479.0  & 0.0006
     & $5.52\times10^{-4}$ & $5.25\times10^{-3}$ & 770.0  & 0.0011 \\
& 32 & $1.69\times10^{-4}$ & $2.22\times10^{-3}$ & 399.8  & 0.0005
     & $6.56\times10^{-5}$ & $4.67\times10^{-3}$ & 543.1  & 0.0006
     & $7.60\times10^{-5}$ & $1.30\times10^{-3}$ & 941.6  & 0.0011 \\
& 64 & $3.00\times10^{-3}$ & $3.97\times10^{-2}$ & 526.2  & 0.0005
     & $2.97\times10^{-3}$ & $9.50\times10^{-3}$ & 884.1  & 0.0006
     & $1.78\times10^{1}$  & $9.95\times10^{-1}$ & 1563.2 & 0.0009 \\
\midrule
\multirow{3}{*}{11}
& 16 & $4.91\times10^{-5}$ & $5.07\times10^{-3}$ & 369.7  & 0.0005
     & $1.10\times10^{-4}$ & $2.51\times10^{-3}$ & 476.5  & 0.0006
     & $5.54\times10^{-5}$ & $2.04\times10^{-3}$ & 763.4  & 0.0009 \\
& 32 & $1.38\times10^{-4}$ & $2.56\times10^{-3}$ & 426.3  & 0.0005
     & $2.63\times10^{-5}$ & $4.37\times10^{-3}$ & 611.4  & 0.0006
     & $5.47\times10^{-5}$ & $1.74\times10^{-3}$ & 1008.3 & 0.0010 \\
& 64 & $3.75\times10^{-5}$ & $4.72\times10^{-3}$ & 622.3  & 0.0006
     & $1.61\times10^{-3}$ & $2.58\times10^{-2}$ & 1015.7 & 0.0007
     & $4.29\times10^{-1}$ & $1.26\times10^{-1}$ & 1847.8 & 0.0011 \\
\bottomrule
\end{tabular}}}
\end{table}

\section{Reproducibility Study: Multi-Run Stability of WE-PINNs
on the Sod Shock Tube Problem}
\label{app:multirun}

This appendix assesses the reproducibility and stability of WE-PINNs on the Sod shock tube problem by repeating the training independently $10$ times under two distinct initialization strategies: a constant initialization, where all runs use the same fixed network parameters and the same random seeds for all stochastic sampling steps, and a random initialization, where each run uses a different random seed for the network weights and sampling sequence. The per-run relative errors in the $L^1$, $L^2$, and $L^\infty$ norms are reported for all variables $\rho$, $u$, $p$, and $E$, together with the mean and standard deviation across the $10$ runs.

Under constant initialization (Table~\ref{tab:multirun_constant}), all $10$ runs produce identical results to machine precision, with standard deviations of order $10^{-17}$ or smaller. In this setting, both the initial network parameters and the random seeds controlling the sampled control volumes, entropy indices, and TVD points are fixed across runs, so the same stochastic sequence is replayed during training. The near-zero standard deviations therefore confirm determinism under fully controlled randomness, up to floating-point roundoff, and validate the reproducibility of the method under controlled conditions.

Under random initialization (Table~\ref{tab:multirun_random}), the $10$ runs produce slightly different results, with standard deviations of the order of $10^{-5}$ in $L^1$ for $\rho$, $p$ and $E$, and $10^{-4}$ in $L^1$ for $u$. These variations are small relative to the mean errors and reflect the inherent sensitivity of neural network training to the random initial weights, which is common to all PINNs-based methods. Importantly, the mean errors under random initialization are consistent with those reported in Table~\ref{tab:euler_errors}, confirming that the results are not specific to a particularly favorable initialization. The largest variability is observed for the energy field $E$ in the $L^\infty$ norm (standard deviation $\approx 3.8\times10^{-3}$), which reflects the sensitivity of the pointwise error near the shock and contact discontinuity. Taken together, these results demonstrate that WE-PINNs is robust to initialization and produces consistently accurate solutions across all runs.

\begin{table}[htbp]
\centering
\caption{Reproducibility study under \textbf{constant initialization}: relative $L^1$, $L^2$, and $L^\infty$ errors for all four variables of the Sod shock tube problem, averaged over the three evaluation times $t=0,0.2,0.4$, over $10$ independent training runs with fixed network initialization and fixed sampling seeds. Average and standard deviation (Std) are reported. The vanishing standard deviations confirm full determinism under controlled randomness.}

\label{tab:multirun_constant}
\renewcommand{\arraystretch}{1.1}
\makebox[\textwidth][c]{%
\resizebox{\textwidth}{!}{%
\begin{tabular}{l ccc ccc ccc ccc}
\toprule
\multirow{2}{*}{\textbf{Run}} &
\multicolumn{3}{c}{\textbf{Density $\rho$}} &
\multicolumn{3}{c}{\textbf{Velocity $u$}} &
\multicolumn{3}{c}{\textbf{Pressure $p$}} &
\multicolumn{3}{c}{\textbf{Energy $E$}} \\
\cmidrule(lr){2-4} \cmidrule(lr){5-7} \cmidrule(lr){8-10} \cmidrule(lr){11-13}
& $E_{L^1}$ & $E_{L^2}$ & $E_{L^\infty}$
& $E_{L^1}$ & $E_{L^2}$ & $E_{L^\infty}$
& $E_{L^1}$ & $E_{L^2}$ & $E_{L^\infty}$
& $E_{L^1}$ & $E_{L^2}$ & $E_{L^\infty}$ \\
\midrule
Run  1 & $6.453\times10^{-4}$ & $3.360\times10^{-3}$ & $4.993\times10^{-2}$ & $4.455\times10^{-3}$ & $2.004\times10^{-2}$ & $4.563\times10^{-2}$ & $4.558\times10^{-4}$ & $2.827\times10^{-3}$ & $5.730\times10^{-2}$ & $4.423\times10^{-4}$ & $2.984\times10^{-3}$ & $6.225\times10^{-2}$ \\
Run  2 & $6.453\times10^{-4}$ & $3.360\times10^{-3}$ & $4.993\times10^{-2}$ & $4.455\times10^{-3}$ & $2.004\times10^{-2}$ & $4.563\times10^{-2}$ & $4.558\times10^{-4}$ & $2.827\times10^{-3}$ & $5.730\times10^{-2}$ & $4.423\times10^{-4}$ & $2.984\times10^{-3}$ & $6.225\times10^{-2}$ \\
Run  3 & $6.453\times10^{-4}$ & $3.360\times10^{-3}$ & $4.993\times10^{-2}$ & $4.455\times10^{-3}$ & $2.004\times10^{-2}$ & $4.563\times10^{-2}$ & $4.558\times10^{-4}$ & $2.827\times10^{-3}$ & $5.730\times10^{-2}$ & $4.423\times10^{-4}$ & $2.984\times10^{-3}$ & $6.225\times10^{-2}$ \\
Run  4 & $6.453\times10^{-4}$ & $3.360\times10^{-3}$ & $4.993\times10^{-2}$ & $4.455\times10^{-3}$ & $2.004\times10^{-2}$ & $4.563\times10^{-2}$ & $4.558\times10^{-4}$ & $2.827\times10^{-3}$ & $5.730\times10^{-2}$ & $4.423\times10^{-4}$ & $2.984\times10^{-3}$ & $6.225\times10^{-2}$ \\
Run  5 & $6.453\times10^{-4}$ & $3.360\times10^{-3}$ & $4.993\times10^{-2}$ & $4.455\times10^{-3}$ & $2.004\times10^{-2}$ & $4.563\times10^{-2}$ & $4.558\times10^{-4}$ & $2.827\times10^{-3}$ & $5.730\times10^{-2}$ & $4.423\times10^{-4}$ & $2.984\times10^{-3}$ & $6.225\times10^{-2}$ \\
Run  6 & $6.453\times10^{-4}$ & $3.360\times10^{-3}$ & $4.993\times10^{-2}$ & $4.455\times10^{-3}$ & $2.004\times10^{-2}$ & $4.563\times10^{-2}$ & $4.558\times10^{-4}$ & $2.827\times10^{-3}$ & $5.730\times10^{-2}$ & $4.423\times10^{-4}$ & $2.984\times10^{-3}$ & $6.225\times10^{-2}$ \\
Run  7 & $6.453\times10^{-4}$ & $3.360\times10^{-3}$ & $4.993\times10^{-2}$ & $4.455\times10^{-3}$ & $2.004\times10^{-2}$ & $4.563\times10^{-2}$ & $4.558\times10^{-4}$ & $2.827\times10^{-3}$ & $5.730\times10^{-2}$ & $4.423\times10^{-4}$ & $2.984\times10^{-3}$ & $6.225\times10^{-2}$ \\
Run  8 & $6.453\times10^{-4}$ & $3.360\times10^{-3}$ & $4.993\times10^{-2}$ & $4.455\times10^{-3}$ & $2.004\times10^{-2}$ & $4.563\times10^{-2}$ & $4.558\times10^{-4}$ & $2.827\times10^{-3}$ & $5.730\times10^{-2}$ & $4.423\times10^{-4}$ & $2.984\times10^{-3}$ & $6.225\times10^{-2}$ \\
Run  9 & $6.453\times10^{-4}$ & $3.360\times10^{-3}$ & $4.993\times10^{-2}$ & $4.455\times10^{-3}$ & $2.004\times10^{-2}$ & $4.563\times10^{-2}$ & $4.558\times10^{-4}$ & $2.827\times10^{-3}$ & $5.730\times10^{-2}$ & $4.423\times10^{-4}$ & $2.984\times10^{-3}$ & $6.225\times10^{-2}$ \\
Run 10 & $6.453\times10^{-4}$ & $3.360\times10^{-3}$ & $4.993\times10^{-2}$ & $4.455\times10^{-3}$ & $2.004\times10^{-2}$ & $4.563\times10^{-2}$ & $4.558\times10^{-4}$ & $2.827\times10^{-3}$ & $5.730\times10^{-2}$ & $4.423\times10^{-4}$ & $2.984\times10^{-3}$ & $6.225\times10^{-2}$ \\
\midrule
Average   & $6.453\times10^{-4}$ & $3.360\times10^{-3}$ & $4.993\times10^{-2}$ & $4.455\times10^{-3}$ & $2.004\times10^{-2}$ & $4.563\times10^{-2}$ & $4.558\times10^{-4}$ & $2.827\times10^{-3}$ & $5.730\times10^{-2}$ & $4.423\times10^{-4}$ & $2.984\times10^{-3}$ & $6.225\times10^{-2}$ \\
\textbf{Std} &$\mathbf{4.571\times10^{-21}}$ & $\mathbf{1.829\times10^{-19}}$ & $\mathbf{1.463\times10^{-17}}$ & $\mathbf{0.000}$ & $\mathbf{0.000}$ & $\mathbf{0.000}$ & $\mathbf{2.286\times10^{-22}}$ & $\mathbf{1.829\times10^{-18}}$ & $\mathbf{0.000}$ & $\mathbf{2.286\times10^{-19}}$ & $\mathbf{0.000}$ & $\mathbf{0.000}$ \\
\bottomrule
\end{tabular}%
}}
\end{table}


\begin{table}[htbp]
\centering
\caption{Reproducibility study under \textbf{random initialization}: relative $L^1$, $L^2$, and $L^\infty$ errors for all four variables of the Sod shock tube problem, averaged over the three evaluation times $t=0,0.2,0.4$, over $10$  independent training runs with different random seeds. Average and standard deviation (Std) are reported. The small standard deviations confirm that WE-PINNs produces consistently accurate solutions across initializations.}
\label{tab:multirun_random}
\renewcommand{\arraystretch}{1.1}
\makebox[\textwidth][c]{%
\resizebox{\textwidth}{!}{%
\begin{tabular}{l ccc ccc ccc ccc}
\toprule
\multirow{2}{*}{\textbf{Run}} &
\multicolumn{3}{c}{\textbf{Density $\rho$}} &
\multicolumn{3}{c}{\textbf{Velocity $u$}} &
\multicolumn{3}{c}{\textbf{Pressure $p$}} &
\multicolumn{3}{c}{\textbf{Energy $E$}} \\
\cmidrule(lr){2-4} \cmidrule(lr){5-7} \cmidrule(lr){8-10} \cmidrule(lr){11-13}
& $E_{L^1}$ & $E_{L^2}$ & $E_{L^\infty}$
& $E_{L^1}$ & $E_{L^2}$ & $E_{L^\infty}$
& $E_{L^1}$ & $E_{L^2}$ & $E_{L^\infty}$
& $E_{L^1}$ & $E_{L^2}$ & $E_{L^\infty}$ \\
\midrule
Run  1 & $6.453\times10^{-4}$ & $3.360\times10^{-3}$ & $4.993\times10^{-2}$ & $4.455\times10^{-3}$ & $2.004\times10^{-2}$ & $4.563\times10^{-2}$ & $4.558\times10^{-4}$ & $2.827\times10^{-3}$ & $5.730\times10^{-2}$ & $4.423\times10^{-4}$ & $2.984\times10^{-3}$ & $6.225\times10^{-2}$ \\
Run  2 & $8.217\times10^{-4}$ & $3.654\times10^{-3}$ & $4.739\times10^{-2}$ & $6.309\times10^{-3}$ & $2.429\times10^{-2}$ & $5.040\times10^{-2}$ & $7.391\times10^{-4}$ & $3.836\times10^{-3}$ & $5.612\times10^{-2}$ & $7.048\times10^{-4}$ & $3.849\times10^{-3}$ & $5.949\times10^{-2}$ \\
Run  3 & $8.385\times10^{-4}$ & $3.841\times10^{-3}$ & $5.164\times10^{-2}$ & $5.172\times10^{-3}$ & $2.149\times10^{-2}$ & $4.815\times10^{-2}$ & $5.349\times10^{-4}$ & $3.260\times10^{-3}$ & $6.406\times10^{-2}$ & $5.176\times10^{-4}$ & $3.417\times10^{-3}$ & $6.974\times10^{-2}$ \\
Run  4 & $8.904\times10^{-4}$ & $3.825\times10^{-3}$ & $5.082\times10^{-2}$ & $5.719\times10^{-3}$ & $2.259\times10^{-2}$ & $4.736\times10^{-2}$ & $6.459\times10^{-4}$ & $3.530\times10^{-3}$ & $6.125\times10^{-2}$ & $6.281\times10^{-4}$ & $3.631\times10^{-3}$ & $6.656\times10^{-2}$ \\
Run  5 & $7.282\times10^{-4}$ & $3.452\times10^{-3}$ & $4.946\times10^{-2}$ & $4.363\times10^{-3}$ & $2.070\times10^{-2}$ & $4.685\times10^{-2}$ & $5.223\times10^{-4}$ & $3.205\times10^{-3}$ & $6.155\times10^{-2}$ & $5.102\times10^{-4}$ & $3.350\times10^{-3}$ & $6.688\times10^{-2}$ \\
Run  6 & $9.029\times10^{-4}$ & $3.844\times10^{-3}$ & $5.224\times10^{-2}$ & $6.624\times10^{-3}$ & $2.238\times10^{-2}$ & $4.946\times10^{-2}$ & $5.709\times10^{-4}$ & $3.413\times10^{-3}$ & $6.781\times10^{-2}$ & $5.454\times10^{-4}$ & $3.599\times10^{-3}$ & $7.387\times10^{-2}$ \\
Run  7 & $8.655\times10^{-4}$ & $4.050\times10^{-3}$ & $5.242\times10^{-2}$ & $5.733\times10^{-3}$ & $2.201\times10^{-2}$ & $4.704\times10^{-2}$ & $6.457\times10^{-4}$ & $3.880\times10^{-3}$ & $6.111\times10^{-2}$ & $6.278\times10^{-4}$ & $3.972\times10^{-3}$ & $6.648\times10^{-2}$ \\
Run  8 & $8.461\times10^{-4}$ & $3.842\times10^{-3}$ & $5.399\times10^{-2}$ & $5.265\times10^{-3}$ & $2.147\times10^{-2}$ & $4.738\times10^{-2}$ & $5.512\times10^{-4}$ & $3.096\times10^{-3}$ & $6.122\times10^{-2}$ & $5.328\times10^{-4}$ & $3.250\times10^{-3}$ & $6.661\times10^{-2}$ \\
Run  9 & $7.100\times10^{-4}$ & $3.773\times10^{-3}$ & $5.246\times10^{-2}$ & $4.565\times10^{-3}$ & $2.372\times10^{-2}$ & $5.102\times10^{-2}$ & $5.095\times10^{-4}$ & $3.350\times10^{-3}$ & $6.117\times10^{-2}$ & $5.017\times10^{-4}$ & $3.484\times10^{-3}$ & $6.619\times10^{-2}$ \\
Run 10 & $6.702\times10^{-4}$ & $3.601\times10^{-3}$ & $5.131\times10^{-2}$ & $4.797\times10^{-3}$ & $2.212\times10^{-2}$ & $4.734\times10^{-2}$ & $4.957\times10^{-4}$ & $3.124\times10^{-3}$ & $6.094\times10^{-2}$ & $4.829\times10^{-4}$ & $3.269\times10^{-3}$ & $6.620\times10^{-2}$ \\
\midrule
Average   & $7.919\times10^{-4}$ & $3.724\times10^{-3}$ & $5.117\times10^{-2}$ & $5.300\times10^{-3}$ & $2.208\times10^{-2}$ & $4.806\times10^{-2}$ & $5.671\times10^{-4}$ & $3.352\times10^{-3}$ & $6.125\times10^{-2}$ & $5.493\times10^{-4}$ & $3.481\times10^{-3}$ & $6.643\times10^{-2}$ \\
\textbf{Std} & $\mathbf{9.458\times10^{-5}}$ & $\mathbf{2.075\times10^{-4}}$ & $\mathbf{1.871\times10^{-3}}$ & $\mathbf{7.843\times10^{-4}}$ & $\mathbf{1.279\times10^{-3}}$ & $\mathbf{1.702\times10^{-3}}$ & $\mathbf{8.570\times10^{-5}}$ & $\mathbf{3.286\times10^{-4}}$ & $\mathbf{3.214\times10^{-3}}$ & $\mathbf{7.993\times10^{-5}}$ & $\mathbf{2.943\times10^{-4}}$ & $\mathbf{3.835\times10^{-3}}$ \\
\bottomrule
\end{tabular}%
}}
\end{table}

\end{document}